\documentclass[10pt,twoside,a4paper]{amsart}

\usepackage[english]{babel}
\usepackage[utf8]{inputenc}

\usepackage{amsfonts,amsthm,amssymb,amsmath,amsthm,latexsym,wasysym,stmaryrd,mathrsfs,dsfont,txfonts,xcolor}
\usepackage{accents,multirow,rotating,subfigure,graphicx,bigstrut,lscape,multicol,enumerate,accents,mathtools,exscale}

\usepackage[normalem]{ulem}

\newcommand{\newmu}{\nu}

\usepackage{pdftricks}
\usepackage{color}
\usepackage[pdftex,all]{xy}

\usepackage{hyperref}
\usepackage{appendix}
\usepackage[justification=centering]{caption}

\usepackage{scrextend}

\graphicspath{{Figures/}}

\setlength{\textwidth}{13cm}
\setlength{\textheight}{21cm}

\numberwithin{equation}{section}

\theoremstyle{theorem}

\newtheorem {theo}{Theorem}[section]
\newtheorem*{theo*}{Theorem}
\newtheorem {lemme}[theo]{Lemma}
\newtheorem*{lemme*}{Lemma}
\newtheorem {prop}[theo]{Proposition}
\newtheorem*{prop*}{Proposition}
\newtheorem {cor}[theo]{Corollary}
\newtheorem*{cor*}{Corollary}

\newtheorem*{cor_proof*}{Corollary (of the proof)}

\newtheorem*{conjecture*}{Conjecture}
\theoremstyle{definition}
\newtheorem {defi}[theo]{Definition}
\newtheorem*{defi*}{Definition}
\newtheorem {nota}[theo]{Notation}
\newtheorem*{nota*}{Notation}
\theoremstyle{remark}
\newtheorem {remarque}[theo]{Remark}
\newtheorem*{remarque*}{Remark}

\newtheorem*{warning*}{Warning}

\newtheorem*{remarques*}{Remarks}

\newtheorem*{warnings*}{Warnings}

\newtheorem*{convention*}{Convention}
\newtheorem {exemple}[theo]{Example}
\newtheorem*{exemple*}{Example}

\newtheorem*{exemples*}{Examples}

\newtheorem*{question*}{Question}

\newtheorem*{questions*}{Questions}

\newtheorem*{fact*}{Fact}
\newtheorem{claim}[theo]{Claim}
\newtheorem*{acknowledgments}{Acknowledgments}

\def\AA{{\mathcal A}}

\def\C{{\mathcal C}}
\def\CC{{\mathcal C}}

\def\N{{\mathds N}}

\def\R{{\mathds R}}

\def\UU{\mathcal C}

\def\Z{{\mathds Z}}

\def\2Z{{\fract{\Z}{2\Z}}}

\def\e{\varepsilon}
\def\p{\partial}

\newcommand{\fract}[2]{\hbox{\leavevmode 
\kern.1em \raise .25ex \hbox{\the\scriptfont0 $#1$}\kern-.1em }\big/
  {\hbox{\kern-.15em \lower .5ex \hbox{\the\scriptfont0 $#2$}} }}
\newcommand{\subfract}[2]{\hbox{\leavevmode
  \kern0em \raise .25ex \hbox{\the\scriptfont0 \tiny $#1$}\kern-.1em }/
  {\hbox{\kern-.15em \lower .5ex \hbox{\the\scriptfont0 \tiny $#2$}} }}

\newcommand{\dessin}[2]{
  \vcenter{\hbox{\includegraphics[height=#1]{#2.pdf}}}}

\newcommand{\function}[5]{
  #1 \colon
  \begin{array}{ccc}
    #2 & \longrightarrow & #4\\[.2cm]
    #3 & \longmapsto & #5
  \end{array}}

\renewcommand{\quote}[1]{`#1'}

%%% Specifique

\newcommand{\omu}{\overline{\mu}}

\def\nR{\textnormal R}

\DeclareMathOperator{\Spun}{Spun}

\DeclareMathOperator{\Ker}{Ker}
\DeclareMathOperator{\rk}{rank}

\renewcommand{\Im}{\textrm{Im}}

\definecolor{pink}{rgb}{0.858, 0.188, 0.478}
\definecolor{orange}{rgb}{1, 0.647, 0}
\definecolor{vert}{rgb}{0.42, 0.557, 0.137}

\setcounter{secnumdepth}{4}

\begin{document} 

\title{Milnor-type invariants for surface-links and cut-diagrams} 
\author[B. Audoux]{Benjamin Audoux}
         \address{Aix Marseille Univ, CNRS, Centrale Marseille, I2M, Marseille, France}
         \email{benjamin.audoux@univ-amu.fr}
\author[J.B. Meilhan]{Jean-Baptiste Meilhan} 
\address{Univ. Grenoble Alpes, CNRS, Institut Fourier, F-38000 Grenoble, France}
	 \email{jean-baptiste.meilhan@univ-grenoble-alpes.fr}
\author[A. Yasuhara]{Akira Yasuhara} 
\address{Faculty of Commerce, Waseda University, 1-6-1 Nishi-Waseda,
  Shinjuku-ku, Tokyo 169-8050, Japan}
	 \email{yasuhara@waseda.jp}
\subjclass[2000]{Primary: 57K45, Secondary: 57K12}
\begin{abstract} 
We generalize Milnor link invariants to surface-links in $4$--space, possibly with boundary. 
To this end, we introduce the notion of cut-diagram, which is a  $2$--dimensional analogue of Gauss diagrams. 
To each cut-diagram, we associate a group extending the fundamental group of the exterior of a surface-link, and we extract Milnor-type invariants from its successive nilpotent quotients. 
We show that this yields concordance invariants for surface-links, and that some even are link-homotopy invariants.
We give several concrete applications, including realization and classification results. 
The theory of cut-diagrams is further investigated, heading towards a 
combinatorial approach to surfaces in $4$--space.
\end{abstract} 

\maketitle

\setcounter{tocdepth}{1}
\tableofcontents

\section*{Introduction}
The study of concordance of knots and links in the $3$-sphere has produced a wide range of invariants over the past decades, revealing subtle geometric and algebraic structures.  
The situation for $2$--links, \emph{i.e.} smooth embeddings of copies of $S^2$ in $S^{4}$, is quite different.  Although every $2$-knot is slice \cite{Kervaire}, it is still unknown whether there exist any non-slice $2$-links (see \cite[Problem 1.56]{Kirby}). 

For classical links, one of the most powerful families of concordance invariants is given by Milnor invariants. Introduced by Milnor in the 1950s \cite{Milnor,Milnor2}, these invariants generalize the linking number and capture higher-order linking phenomena through the nilpotent quotients of the fundamental group of the link complement. They have proved very effective in detecting nontrivial concordance classes.  

A natural and longstanding question is whether Milnor invariants admit meaningful extensions to higher-dimensional links. 
Nontrivial generalizations do exist in the related context of link maps, 
where the embedding condition is relaxed \cite{Kolink}, but these constructions do not directly address the embedded case, and rather concern the weaker relation of link-homotopy.  
In this paper, we propose a different approach. Rather than restricting attention to spherical links, we study general surface-links in 4–space, allowing components of arbitrary genus and possibly with boundary. 
Within this broader setting, we define Milnor-type invariants that extend the classical construction and provide new concordance invariants. 

In order to define our generalization of Milnor invariants, we develop a theory of cut-diagrams. 
A \emph{cut-diagram} consists of an oriented  diagram on a surface, endowed with some labeling. 
This provides a combinatorial framework for studying surface-links, which can be seen as a $2$--dimensional version of Gauss diagrams. 
In particular, any diagram of a surface-link naturally gives rise to a cut-diagram, but the definition is flexible enough to allow a purely abstract treatment.
To each cut-diagram, we associate a group generated by its \lq regions\rq\, and defined by Wirtinger-type relations. In the case of a cut-diagram arising from a surface-link, this group coincides with the fundamental group of the complement. A key step in our construction is the derivation of a Chen–Milnor type presentation for the nilpotent quotients of this group (Theorem \ref{alaChen}). This presentation, obtained by combinatorial methods, generalizes the classical result of Milnor for links \cite[Thm.~4]{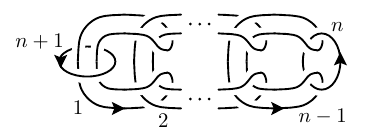} and provides an effective tool for computation, see Remark \ref{rem:Stallings0}. 

Building on this, we define Milnor-type invariants by considering the Magnus expansion of suitable elements associated with paths on the underlying surface. These elements generalize the notion of preferred longitudes in classical link theory. Although the resulting coefficients depend a priori on several choices, we show how to extract well-defined numerical invariants.
More precisely, for each component of the surface-link and each sequence of indices in $\{1,\cdots,n\}$, where $n$ is the number of components of the underlying surface, we define two families of invariants: loop-invariants, associated with closed curves on the surface, and arc-invariants, associated with paths connecting boundary components. 
We prove the following (see Corollaries \ref{cor:MilnorConcordance} and \ref{cor:lhinvariance}).
\begin{theo*}
  Milnor loop- and arc-invariants are well-defined concordance invariants of surface-links. 
  Moreover, if the indexing sequence contains no repetition, the corresponding Milnor invariants are also invariant under link-homotopy. 
\end{theo*}

Since our invariants are extracted from essential loops and arcs on the surface, they all vanish for spherical $2$--links (Remark \ref{pourlesgolmons}).
However, as soon as at least one component is not simply connected, the invariants become nontrivial and, in many cases, quite effective.
This is illustrated in Section \ref{sec:appli}, where a number of applications are provided:   
\begin{itemize}
 \item We give a general realization result, showing the existence of infinitely many 
 surface-links consisting of a single torus and a union of spheres that are slice, but neither null-concordant nor link-homotopically trivial (Proposition~\ref{prop:anginedepoitrine}). In contrast, it is known that all $2$-links are link-homotopically trivial \cite{BT}.
\item In Section \ref{sec:Saito}, we consider a family $\left(W_m\right)_{m\in \mathbf{N}}$ of links made of a sphere and a torus. We show that $W_{m'}$ and $W_{m}$ are concordant if and only if $m=m'$ by using our Milnor invariants while, in contrast, none of the concordance invariants defined by Sato-Levine \cite{Sato}, Cochran \cite{Cochran} and Saito \cite{Saito} 
can distinguish $W_m$ and $W_{m'}$ for $m\neq m'$. 
   \item In Section \ref{sec:lhkps} we state a general link-homotopy
  classification for proper embeddings of disjoint punctured spheres in the $4$-ball, generalizing results of \cite{ABMW,AMW,MY7}.
  \item In Section \ref{sec:Cycles}, we classify Spun links of $3$-compo\-nents up to link-homotopy (Proposition \ref{prop:spun}), 
  and we characterize link-homotopically trivial Spun links of any number of components (Proposition \ref{prop:spun2}).
 \item Section \ref{sec:Kernel} gives an obstruction for a surface-link to be concordant to a ribbon one, which in particular implies that the $2$-dimensional analogue of the braid closure map is not surjective (Corollary \ref{cor:closure}). 
 \item In Section \ref{sec:k-slice}, we characterize classical links with vanishing Milnor invariants of length $\leq 2k$ 
in terms of Milnor invariants of surfaces bounded by the link (Proposition \ref{prop:429}), in connection with the 
\emph{$k$-slice conjecture} proved by Igusa and Orr in \cite{IO}. 
\end{itemize}
We note that our construction recovers previously known extensions of Milnor invariants \cite{ABMW,AMW,MY7}, see Section \ref{sec:previous}.
The relation with Orr's invariants of surface-links \cite{Orr1,Orr2} is discussed in Section \ref{sec:orr}.\\[-0.2cm]

Beyond the construction of invariants, another purpose of this paper is to develop of a general theory of cut-diagrams. We introduce in Sections \ref{sec:Isotopies} and  \ref{sec:SingCo} natural equivalence relations on cut-diagrams, corresponding to isotopy and link-homotopy of surface-links, generated by explicit local moves. This provides a combinatorial framework for studying surfaces in $4$--space. 
The authors believe that this is a promising direction for further research (see e.g. \cite{AMY_kirk} for a concrete example). 
\medskip 

The paper is organized as follows. In Section \ref{sec:kutkutkut}, we introduce cut-diagrams and discuss their connection to surface-links. 
Section \ref{sec:PeripheralSystems} is devoted to the group and the peripheral system
associated to a cut-diagram; in particular we give in Section \ref{sec:periph} the main invariance results, and we state in Theorem \ref{alaChen} a Chen–Milnor type presentation for the nilpotent quotients of this group. 
Section \ref{sec:milnor-numbers-cut} introduces Milnor invariants of cut-diagrams, hence in particular of surface-links, and discusses their properties and relation to existing constructions. 
In Section \ref{sec:SingCo} we explain how to adapt the construction to a singular context, and  derive  link-homotopy invariants. 
Section \ref{sec:appli} presents a range of topological applications illustrating the strength of our invariants. 
Finally, Section \ref{vie} contains the proofs of several technical results, including the key Theorem \ref{alaChen}. 

\subsection*{Definitions and Conventions}
In this paper, everything is stated in the smooth category. All surfaces are compact, ordered and oriented. 

A \emph{surface-link} is a proper smooth embedding of a surface $\Sigma$ in the $4$--ball $B^4$.
Two surface-links $f$ and $g$ of $\Sigma$ are \emph{equivalent} if Im$f$ and Im$g$
are ambient isotopic relative to boundary or, equivalently, if $g$ is
smoothly isotopic to $f\circ \varphi$ for some diffeomorphism $\varphi$ of $\Sigma$. 
By diffeomorphism of $\Sigma$, we always mean an orientation, component and boundary component preserving diffeomorphism from $\Sigma$ to itself.

Two surface-links $f$ and $g$ of $\Sigma$ are \emph{concordant}  if there exists an embedding 
$W: \Sigma\times [0,1]\hookrightarrow B^4\times [0,1]$
such that, for each $i$, the image of the $i$-th component of $\Sigma\times [0,1]$ 
intersects $(B^3\times [0,1])\times \{0\}$, resp. $(B^3\times [0,1])\times \{1\}$, along the $i$-th component of Im$f$, resp. Im$g$, respecting the orientation. 
If $\Sigma$ is not closed, we further require that 
$W\left( \partial\Sigma\times [0,1]\right) = f(\partial \Sigma) \times [0,1]$. 
\\
A closed surface-link is \emph{slice} if it bounds a disjoint union of $3$--dimensional handlebodies in the $5$--ball.
We stress that a null-concordant (closed) surface-link is always slice, but the converse is not necessarily true (see Proposition \ref{prop:anginedepoitrine});  
both notions however coincide in the spherical case of $2$--links. 

A \emph{surface-link map} is a continuous map from $\Sigma$ to $\mathbb{R}^4$ or the $4$--ball $B^4$ with pairwise disjoint images; 
if  $\partial \Sigma\neq \emptyset$, then $\partial \Sigma$ embeds in $S^3=\partial B^4$. 
\emph{Link maps} correspond to the case where $\Sigma$ is a union of spheres. 
Note in particular that a surface-link is a surface-link map. 

Two surface-link maps $f$ and $g$ are \emph{link-homotopic} if 
$f$ is homotopic through surface-link maps to some $f'$ such that
Im$f'$=Im$g$ or, equivalently, if 
$g$ is homotopic through surface-link maps to $f\circ\varphi$, for
some diffeomorphism $\varphi$ of $\Sigma$.\footnote{Note that this
    definition coincides for link maps with the definition usually used
    in the litterature \cite{Kolink,Kirk,ST}. \label{pied}}     
Recall that concordance implies link-homotopy for surface-links, see \cite{BT,BPT}. 
\medskip 

 We will make use of the following notation and conventions: 
 \begin{itemize}
  \item given two elements $a,b$ of some group, their commutator is defined as $[a,b]:=a^{-1}b^{-1}ab$, 
       and the conjugate of $a$ by $b$ is given by $a^b:=b^{-1}ab$;   
  \item given two normal subgroups $G_1$ and $G_2$ of a group
      $G$, we denote by $G_1\cdot G_2$ the normal subgroup of $G$ made of
      products $g_1g_2$ with $g_1\in G_1$ and $g_2\in G_2$;
  \item homotopies of paths should always be understood as fixing the endpoints; %. 
  \item homology groups are with coefficients in $\mathbb{Z}$, and this will be omitted in our notation. 
\end{itemize}

\begin{acknowledgments}
The first author thanks the IRL PIMS-Europe for its hospitality during the period in which much of the work on this paper was completed.  
The second author was partially supported by the project AlMaRe (ANR-19-CE40-0001-01) of the ANR. 
The third author is supported by a JSPS KAKENHI grant Number 26K06817 for Kakenhi, and by a Waseda University Grant for Special Research Projects (Project number 2026C-078). 
We thank Butian Zhang for pointing out  an orientation mistake in Figure \ref{fig:yooohooo}. 
\end{acknowledgments}

\section{Cut-diagrams}\label{sec:kutkutkut}

In this section we introduce cut-diagrams, which are combinatorial objects that can be thought of as generalized surface-link diagrams. 
Before providing a general definition, we introduce cut-diagrams when they arise from diagrams of surface-links. 

\subsection{An intuitive approach}\label{sec:intuition}

Consider a surface-link diagram, which is a generic projection of a
surface-link on $\R^3\times \{0\}$.
This is  a map $\Sigma\looparrowright \R^3$ for a surface $\Sigma$, 
with lines of transverse double points which may meet at triple points and/or end at branch points. 
As in knot diagrams, the over/under information on the two preimages at each line of double points is encoded 
by cutting off a neighborhood of the lowest preimage 
(see the left-hand side of Figure \ref{fig:yooohooo},  \ref{fig:DtoU_D} and \ref{fig:endpoint} for some examples). 
\begin{figure}
\[
\dessin{3.5cm}{KnottedSphere3}\ \leadsto\ \ \dessin{3.1cm}{CutSphere3}
\]
  \caption{From a surface-link diagram to a $2$--dimensional cut-diagram.
  \\{\footnotesize In all figures, regions are named
    with capital letters, and labels  are given by circled nametags}}
  \label{fig:yooohooo}
\end{figure}
Each line of double points also comes with an orientation, such that
the local frame formed by a positive
normal vector to the overpassing region, a positive
normal vector to the underpassing region, and a positive
tangent vector to the line of double points, agrees with the ambient orientation of $\R^3$.

The lowest preimages of double points form a union $P$ of immersed circles and/or intervals in $\Sigma$, which splits $\Sigma$ into regions. 
Note that the set $P$ is known as the lower deck in the literature (see \cite[\S~4.1]{CS}).
Each triple point of the surface diagram provides an over/under information at the corresponding crossing of $P$, which is encoded as in usual knot diagrams by splitting $P$ into arcs, that we call cut-arcs. 
Each cut-arc inherits an orientation, which is the orientation of the corresponding line of double points, and we label it by the region containing the preimage with highest coordinate of the corresponding line of double points. 
The data of the oriented and labeled cut-arcs in $\Sigma$ forms what we will call a cut-diagram over $\Sigma$ for the given surface-link. 
An example is given in Figure \ref{fig:yooohooo} in the case $\Sigma=S^2$.

We stress that the labeling respects some natural conditions which are inherited from the topological nature of the surface diagram. 
We investigate these conditions below, by analyzing the local cut-diagrams arising from triple and branch points.
\begin{itemize} 
 \item 
An example of local cut-diagram associated with a triple point is illustrated in Figure \ref{fig:DtoU_D}. 
\begin{figure} 
  \[
  \dessin{2.8cm}{Triple_new}\quad \leadsto\quad \dessin{2.8cm}{CutTriple_new}
\]
  \caption{From surface-link diagrams to cut-diagrams: triple point}
  \label{fig:DtoU_D}
\end{figure}
A crossing among cut-arcs occurs in the leftmost sheet of $\Sigma$. There, the overpassing arc is labeled by $H$, whereas the left and right underpassing cut-arcs, with respect to the orientation of the overpassing arc, are labeled by $E$ and $F$,
respectively; these regions $E$ and $F$ are contained in the middle sheet, and are separated by an $H$--labeled cut-arc, with $E$ on the left and $F$ on the right side of this arc.
\item 
Figure \ref{fig:endpoint} shows a local cut-diagram associated to a branch point. 
\begin{figure} 
  \[
\dessin{2.2cm}{Branch}\quad \leadsto\quad \dessin{2.2cm}{CutBranch}
\]
  \caption{From  surface-link diagrams to cut-diagrams: branch point}
  \label{fig:endpoint}
\end{figure}
This yields a local cut-diagram with a boundary point in the interior of the surface $\Sigma$, 
and the incident cut-arc  is labeled by the region containing this boundary point. 
\end{itemize}
We stress that Figures \ref{fig:DtoU_D} and \ref{fig:endpoint} are only examples of local models, since different cases exist depending on the local plane orientations and over/under informations.

\subsection{Abstract cut-diagrams}\label{sec:abstract}

The above discussion motivates the following definitions, leading to the abstract notion of cut-diagram. 
Let $\Sigma$ be some surface, possibly with boundary.

\begin{defi}\label{def:onlepublierabienunjourcepapier}
A \emph{diagram} over $\Sigma$, is a (non necessarily properly) immersed oriented $1$--manifold in $\Sigma$, 
with only transverse double points, that are endowed with an over/under information as in usual knot diagrams. 

This immersed $1$--manifold splits $\Sigma$ into pieces called \emph{regions}, 
and the over/under decoration at each crossing splits the immersed
manifold into \emph{cut-arcs}, with the induced orientation. 

An \emph{internal endpoint} of a diagram over $\Sigma$ is a point of a boundary component of some cut-arc,  
which is in the interior of $\Sigma$. A cut-arc containing an internal endpoint is called \emph{terminal}.
In figures, internal endpoints will be depicted with a black dot $\bullet$. 
\end{defi}

\begin{defi}\label{def:adjacent}
A \emph{labeling} of a diagram, is an assignment of a region to each cut-arc. 
An ordered pair of regions $(A,B)$ in a labeled diagram is called \emph{$C$--adjacent}, for some region $C$, if 
there exists a smooth path $\gamma:[0,1]\to \Sigma$ such that
$\gamma\big([0,\subfract12[\big)\subset A$, $\gamma\big(]\subfract12,1]\big)\subset B$, 
and $\gamma(\subfract12)$ is a point of a cut-arc labeled by $C$, disjoint from all crossings, 
such that the tangent vector $\gamma'(\subfract12)$ is a positive normal vector for this cut-arc.
We define similarly the notion of \emph{$C$--adjacency} for two cut-arcs which
are separated by a crossing with a $C$--labeled cut-arc.
\end{defi}
\begin{exemple} 
In Figure \ref{fig:DtoU_D},  the pair of regions $(E,F)$ is $H$--adjacent (middle sheet), and 
we observe that likewise, the $E$--labeled and $F$--labeled cut-arcs are $H$--adjacent (leftmost sheet).
\end{exemple}
Inspired by the discussion of the previous section, we set the following: 
\begin{defi}\label{cond}
A labeling is \emph{admissible} 
if it satisfies the following \emph{labeling conditions}:  
\begin{enumerate}
\item[1)] any pair of $A$--adjacent cut-arcs, for some
  region $A$, is labeled by two $A$--adjacent regions; 
\item[2)] a terminal cut-arc containing an internal endpoint
  in some region $A$, is labeled by $A$.
\end{enumerate}
\end{defi}
\noindent These two conditions are locally represented in Figure \ref{fig:cond}.
\begin{figure} 
  \[ \begin{array}{cc}
   \dessin{2.25cm}{Cond1}\quad\quad & \quad\quad\dessin{2.25cm}{Cond2}\\
   \textrm{First condition}\quad\quad & \quad\quad\textrm{Second condition}
  \end{array}\]
  \caption{The two labeling conditions}
  \label{fig:cond}
\end{figure}

The left-hand side of Figure \ref{fig:label1} provides an example where the first labeling condition is violated.
There, the two regions $A$ and $B$ are $B$--adjacent but are not $C$--adjacent; 
therefore the crossing of cut-arcs at the center of the figure, where an $A$--labeled and a $B$--labeled cut-arcs are $C$--adjacent, does not respect the first rule. 
\begin{figure} 
  \[ \begin{array}{ccc}
   \dessin{2.25cm}{Labeled0} & $\qquad \qquad$ & \dessin{2.25cm}{NonLabelable}
   \end{array}\]
  \caption{Diagrams that violate the first (left) and second (right) labeling condition }
  \label{fig:label1}
\end{figure}
The right-hand side of Figure \ref{fig:label1} shows a situation where the second labeling condition cannot be satisfied.
Indeed, the left $\bullet$ endpoint sits in region $A$, which imposes that the horizontal cut-arc must be labeled by $A$; 
but this violates the second labeling condition because of the right $\bullet$ endpoint, which lies in region $B$. 

We can finally set the main definition of this section. 
\begin{defi}\label{def:oncerlarondelle}
A \emph{cut-diagram} over $\Sigma$ is a diagram over $\Sigma$ endowed with an admissible labeling. 
% regarded up to ambient isotopy of $\Sigma$. 
\end{defi}

As discussed in the preceding section, there is a canonical way to associate a cut-diagram to a given surface-link diagram, 
and the labeling conditions are automatically satisfied in this setting.
We say that a cut-diagram is \emph{topological} if it arises from a surface-link in this way. 
A number of examples  can be found in Sections~\ref{sec:lair} and \ref{sec:appli}. 
We stress that not all cut-diagrams are topological. 

\subsection{Local moves for cut-diagrams}
\label{sec:Isotopies}

Recall that two surface-link diagrams represent isotopic surface-links if and only if they differ by a sequence of the seven Roseman moves given in \cite{Roseman3}. 
These Roseman moves are straightforwardly translated into moves on cut-diagrams, and it is not hard to check that they are generated by the seven local \emph{cut-moves} C$i$ ($i=0,\cdots,6$) given in Figure \ref{fig:IsotopyDim2}. An alternative, equivalent set of moves is given in \cite[Fig.~3.4]{AMY_kirk}. 

\begin{remarque}\label{rem:labels}
Move C0 may merge two regions or split a region into two: in this case, all cut-arcs  labeled by one of the two merged regions are relabeled by the new region, and all cut-arcs labeled by the split region have to be relabeled in an admissible way\footnotemark 
\, by any of the two new regions. 
All other moves in Figure \ref{fig:IsotopyDim2} are \quote{local}, but may delete or create a region:
such moves are only valid if this region never occurs as the label of any cut-arc. 
\end{remarque}
      \footnotetext{{Note that such an admissible relabeling may not exist, in which
      case the move is not valid; if several different relabelings are
      admissible, then the move exists in several valid versions.}}
      
 \begin{figure} 
\centerline{$
  \begin{array}{ccccc}
    &\hspace{.5cm}&\dessin{1.7cm}{M5_1} \stackrel{\textrm{C0}}{\longleftrightarrow} 
                   \dessin{1.7cm}{M5_2}&\hspace{.5cm}&\\[1cm]
    \dessin{1.7cm}{M1_1} \stackrel{\textrm{C1}}{\longleftrightarrow} 
    \dessin{1.7cm}{M1_2}
    &&
       \dessin{1.7cm}{M3_1} \stackrel{\textrm{C2}}{\longleftrightarrow} 
       \dessin{1.7cm}{M3_2}
                 &&
                    \dessin{1.7cm}{M2_1} \stackrel{\textrm{C3}}{\longleftrightarrow} 
                    \dessin{1.7cm}{M2_2}  \\[1cm]
    \dessin{1.7cm}{M6_1} \stackrel{\textrm{C4}}{\longleftrightarrow} 
    \dessin{1.7cm}{M6_2}
    &&
       \dessin{1.7cm}{M4_1} \stackrel{\textrm{C5}}{\longleftrightarrow} 
       \dessin{1.7cm}{M4_2}
                 &&
                    \dessin{1.7cm}{M7_1} \stackrel{\textrm{C6}}{\longleftrightarrow} 
                    \dessin{1.7cm}{M7_2}
  \end{array}
$}
  \caption{The cut-moves C$i$ ($i=0,\cdots,6$) on cut-diagrams:\\
  \footnotesize{
we only indicate the labelings when necessary, with the implicit condition that a move is valid only if the labeling conditions are fulfilled before and after the move}
}
\label{fig:IsotopyDim2}
\end{figure}  

Note that two embeddings of a surface having the same image in $4$-space may have different topological cut-diagrams, since two such cut-diagrams might differ by 
some diffeomorphism of $\Sigma$. This motivates taking the following equivalence relation on cut-diagrams. 
\begin{defi}\label{def:cuteq}
 Two cut-diagrams $\UU$ and $\UU'$ over $\Sigma$ are \emph{equivalent} if there exists a 
diffeomorphism $\varphi$ of $\Sigma$  such that $\varphi(\UU)$ and $\UU'$ are related by a sequence of cut-moves. 
\end{defi}

With this definition, we have the following as a consequence of Roseman's theorem \cite[Thm.~1]{Roseman3}.
\begin{prop}\label{cprop}
Two topological cut-diagrams representing isotopic surface-links are equivalent. 
\end{prop}

\subsection{The $1$--dimensional case}\label{sec:1}

We conclude this section by outlining an analogous notion in the context of classical knot theory. 

A  knot diagram is a generic immersion $S^1\looparrowright \R^2$ with a finite number of transverse double points, 
endowed with the usual over/under information on the two preimages. 
Applying the same routine as in the surface-link case, we can consider for each crossing the preimage with lowest coordinate: 
this forms a collection of points on $S^1$, that we call \emph{cut-points},
which split $S^1$ into a number of arcs, that we call \emph{regions}. 
To each cut-point is associated a sign---the sign of the
corresponding crossing, and a region---the overpassing arc at
the crossing. This construction generalizes to diagrams of links and tangles; 
see Figure \ref{fig:1DimCut} for examples.

More generally, one can define abstractly a notion of \emph{$1$--dimensional cut-diagram}, as an oriented $1$--manifold  
together with a collection of cut-points endowed with a sign and labeled by some region. (Note that the natural adaptation of Definition \ref{cond} to this $1$-dimensional setting yields vacuous labeling conditions.)
\begin{figure} 
  \[
    \dessin{2.3cm}{Trefoil} \ \leadsto \, \dessin{2.7cm}{CutTrefoil}\quad ; \quad
    \dessin{2.5cm}{TangleDiagram} \ \leadsto\, \dessin{2.7cm}{CutTangle}
  \]
  \caption{From diagrams to a $1$--dimensional cut-diagrams.\\{\footnotesize Regions are
      named with capital letters, and labels on cut-points are given by circled nametags}}
  \label{fig:1DimCut}
\end{figure}

This notion is reminiscent of Gauss diagrams \cite{GPV}. 
Indeed, cut-points can be thought of as heads of arrows in a Gauss diagram, where the label represents the region  where the tail is attached. 
Note however that the relative position of tails attached to a same region is not specified in this language. So $1$--dimensional
cut-diagrams should rather be thought of as Gauss diagrams modulo the
local move which exchanges two adjacent tails. 
In fact, by translating Reidemeister moves into this langage of $1$--dimensional cut-diagrams, one recovers the theory of welded knotted objects. 
Hence cut-diagrams can be seen as a $2$--dimensional generalization of Gauss diagrams and/or welded theory. 
This will however not be pursued in this paper, whose main focus is surface-links.  
Consequently, in what follows, the term cut-diagram will be exclusively used for the $2$--dimensional setting of Section \ref{sec:abstract}. 

\section{Group and peripheral systems of cut-diagrams}\label{sec:PeripheralSystems}

One important property of cut-diagrams is that they contain all the relevant information to define 
\quote{fundamental} groups and their distinguished elements, meridians and
longitudes. 

Fix a surface $\Sigma$ with connected components $\Sigma_1, \cdots, \Sigma_n$, and let $\UU$ be a cut-diagram over $\Sigma$.

\subsection{The group of a cut-diagram}

\begin{defi}\label{def:pi1}
The \emph{group $G(\UU)$ of $\UU$} is the group abstractly generated by its regions, and satisfying the Wirtinger relation $B^{-1}A^C$ for each pair $(A,B)$ of $C$--adjacent regions. 
When seen as generators of $G(\UU)$, regions will be called \emph{meridians} of $\UU$, and more precisely \emph{$i$-th meridian} when they belong to $\Sigma_i$. 
\end{defi}
We will see in Proposition \ref{prop:PeripheralSystemIsotopyInvariance} that equivalent cut-diagrams have isomorphic groups. Moreover, this notion is sensible from a topological point of view, as the following illustrates. 
\begin{prop}\label{rem:topopi1}
The group of a topological cut-diagram coincides with the fundamental group of the exterior of the underlying surface-link.
The above notion of meridian then agrees with the usual topological notion of meridian. 
\end{prop}
\begin{proof}
This is a direct consequence of the classical algorithm that gives a presentation for the fundamental group of the exterior of a surface-link from a surface diagram, see e.g. \cite[pp.~88--89]{CKS}. One first assigns generators to the connected components of the diagram: these are in one-to-one correspondence with the cut-diagram regions. Relations are then read off locally from the singular set of the diagram: each curve of double points yields a Wirtinger-type relation among the three generators meeting along this curve, which is the exact same conjugation relation as set in Definition \ref{def:pi1}; no further relation arises from triple or branch points. 
\end{proof}

\begin{exemple}\label{ex:wegottarunrunrunrunrunrunrunrunrunrunrun1}
Consider the $2$-component surface-link $H$ shown on the left-hand side of Figure \ref{fig:Zu-Bead-Ah}.  A topological cut-diagram for $H$ is given on the right-hand side of the figure.
This  will serve as a simple running example throughout this section: see Examples \ref{ex:wegottarunrunrunrunrunrunrunrunrunrunrun2} and \ref{ex:wegottarunrunrunrunrunrunrunrunrunrunrun3}.
\begin{figure} 
  \[
    \dessin{2.8cm}{Bead_H1} \,\,\, \leadsto\,\,\, \dessin{2.8cm}{Bead_H}
  \]
  \caption{Diagram for the surface-link $H$ and associated cut-diagram $\UU_{H}$}
  \label{fig:Zu-Bead-Ah}
\end{figure}

The torus-component of $H$ has a single region $A$ and a $B$-labeled cut-arc, while the sphere-component has two regions $B$ and $C$, such that the pair $(B,C)$ is $A$-adjacent.
The group of $H$ thus has presentation
\[ G(H) = \langle A,B,C\, \vert \, A=A^B, \, C=B^A \rangle \simeq \langle A,B\, \vert \, BA=AB\rangle \simeq \mathbb{Z}\oplus \mathbb{Z}. \]
\end{exemple}

\begin{nota}\label{nota:W}
We denote by $\bar{F}$ the free group generated by all meridians of $\UU$, 
and by  $W$ the normal subgroup of $\bar{F}$ generated by all Wirtinger relations, so that we have
$G(\UU)=\fract{\bar{F}}{W}$. 
\end{nota}

In the sequel, we will be mainly interested in the following quotients. 
\begin{defi}
The \emph{lower central series} $\left( G_q\right)_{q\in\mathbb{N}}$ of a group $G$, 
is the descending series of subgroups defined inductively by $G_1:=G$ and $G_{q+1}:=[G,G_q]$. For $q\in\mathbb{N}$, 
the \emph{$q$-th nilpotent quotient} of $G$ is the quotient $N_qG:=\fract{G}{G_q}$ by the $q$-th term of its lower central series.
\end{defi}
For any $q\in\mathbb{N}$, the \emph{$q$-th nilpotent 
group} of $\UU$ is the nilpotent quotient $N_qG(\UU)$ of $G(\UU)$, 
and we have 
\[N_qG(\UU)\cong \fract{\bar{F}}{\bar{F}_q\!\cdot\! W}. \]

\subsection{Generic path and associated group elements}\label{sec:TopCombLemma}

A path
$\gamma:[0,1]\to \Sigma$ is \emph{generic} if
$\gamma(0)$ and $\gamma(1)$ are disjoint from $\UU$,  
and $\gamma\big(]0,1[\big)$ meets $\UU$ transversally in a finite number of regular points of $\UU$, that is, points that are neither double points nor endpoints. 
\begin{lemme}\label{lem:GenMoves}
The following properties hold. 
  \begin{itemize}
%  \item[]
  \item Any path on $\Sigma$ with endpoints in $\Sigma\setminus \UU$ is homotopic to a generic path.
  \item If $\gamma_1$ and $\gamma_2$ are two homotopic generic paths, then they are related by a
    finite sequence of homotopies through generic paths, and the
    following local moves which are supported in a $2$--disk in $\Sigma$:
    \begin{itemize}
  \item[] $\textnormal H_1:$ insertion/deletion of a loop going around a regular point of $\UU$\\[-0.3cm]
\[
\dessin{1.5cm}{Move1_1}\ \leftrightarrow\ \dessin{1.5cm}{Move1_2};
\]
  \item[] $\textnormal H_2:$ insertion/deletion of a loop going around an internal endpoint\\[-0.3cm]
\[
\dessin{1.5cm}{Move2_1}\ \leftrightarrow\ \dessin{1.5cm}{Move2_2};
\]
  \item[] $\textnormal H_3:$ insertion/deletion of a loop going around a double point of $\UU$\\[-0.3cm]
\[
\dessin{1.5cm}{Move3_1}\ \leftrightarrow\ \dessin{1.5cm}{Move3_2}.
\]
  \end{itemize}
  \end{itemize}
\end{lemme}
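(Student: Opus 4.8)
The plan is to treat the two bullets separately, as each relies on a standard but distinct piece of transversality technology. For the first bullet, I would invoke the Thom transversality theorem: given an arbitrary path $\gamma:[0,1]\to\Sigma$ with $\gamma(0),\gamma(1)\in\Sigma\setminus\UU$, I want to perturb it, relative to its endpoints, to be transverse to the stratified set $\UU=\varphi(P)$. The set $\UU$ is naturally stratified by its regular points (an $(n-1)$--stratum), its double points (an $(n-2)$--stratum), the branch points, and, when $n\geq 2$, the internal endpoints of cut domains. In the case at hand, $\dim\Sigma$ can be arbitrary, but $\gamma$ is always $1$--dimensional; after a small homotopy fixing endpoints, I can arrange that $\gamma(]0,1[)$ misses every stratum of codimension $\geq 2$ in $\Sigma$ and meets the top stratum (regular points) transversally, in finitely many points. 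This is exactly the definition of a generic path, so the first bullet follows. The only care needed is that the homotopy keeps $\gamma(0)$ and $\gamma(1)$ fixed, which is fine since they already lie in the open set $\Sigma\setminus\UU$; one perturbs only the interior.

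For the second bullet, the strategy is the usual ``isotopy extension along a generic $2$--parameter family'' argument. Let $\gamma_1,\gamma_2$ be homotopic generic paths and let $H:[0,1]\times[0,1]\to\Sigma$ be a homotopy between them, fixing the two endpoints. I would first perturb $H$, relative to $\{0,1\}\times[0,1]$ (where it already restricts to the generic paths $\gamma_1,\gamma_2$) and relative to $[0,1]\times\{0,1\}$ (the fixed endpoints), so that $H$ is transverse to the stratification of $\UU$ in the appropriate stratified sense. For a generic $2$--parameter family the preimage $H^{-1}(\UU)$ is then a $1$--dimensional subcomplex of the square: a finite graph whose edges are arcs crossing the $(n-1)$--stratum and whose vertices are the finitely many ``events'' where the path $H(t,\cdot)$ momentarily touches a codimension-$2$ stratum of $\UU$ — namely a double point of $\UU$ (giving move $\textnormal H_3$), an internal endpoint/branch point $\p\UU\setminus\p\Sigma$ (giving move $\textnormal H_2$), or a tangency of the path with the regular stratum (giving move $\textnormal H_1$, the birth/death of a small loop around a regular point). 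Slicing this picture by the first coordinate $s\in[0,1]$, the combinatorics of $\gamma_s:=H(s,\cdot)$ changes only as $s$ crosses one of these finitely many critical values, and at each such value the change is precisely one of the three listed local moves, supported in a small $2$--disk in $\Sigma$ meeting $\UU$ transversally; between critical values the paths differ only by a homotopy within generic paths (an ambient isotopy of the interval pushing the crossing points around). Assembling these local changes in order of increasing $s$ yields the required finite sequence relating $\gamma_1$ to $\gamma_2$.

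The main obstacle, and the place where one must be slightly careful, is the stratified transversality for the homotopy $H$: $\UU$ is not a submanifold but the image of a generic immersion with branch points, so ``$H$ transverse to $\UU$'' must be unpacked stratum by stratum, and one must check that a $2$--parameter family can indeed be made simultaneously transverse to all strata while avoiding, generically, the codimension-$3$ strata (triple points of $\UU$, self-intersections of the branch locus, etc.), which for a $2$--parameter family into an $n$--manifold would force $n\leq 4$-type dimension counts. Here the saving grace is that we only ever intersect a square (or the interval) against $\UU$, so the relevant count is $\dim(\text{domain of }H)+\dim(\text{stratum})<\dim\Sigma$ for the codimension-$\geq 3$ strata, which holds automatically; and for the codimension-$2$ strata the intersection is generically $0$--dimensional, producing exactly the isolated events listed. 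One then checks, by a direct local model near each type of event, that the induced change on the sliced path is exactly $\textnormal H_1$, $\textnormal H_2$ or $\textnormal H_3$ as claimed. I would also remark that the three move-types are forced: $\textnormal H_1$ comes from a fold tangency with the regular stratum, $\textnormal H_2$ from passing an endpoint of a cut domain, and $\textnormal H_3$ from passing a transverse double point of $\UU$ — these being the only codimension-$2$ features a generic $2$--parameter family can encounter.
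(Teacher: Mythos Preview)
Your proposal is correct and follows essentially the same approach as the paper, which simply states that the lemma ``follows from classical Thom's transversality theory'' without further detail. You have faithfully unpacked that one-line citation: perturbing the path (respectively, the homotopy square) to be stratum-transverse to $\UU$, counting dimensions to see which strata a generic $1$-- (respectively, $2$--) parameter family can meet, and reading off the three local events $\textnormal H_1,\textnormal H_2,\textnormal H_3$ as fold tangencies, endpoint crossings, and double-point crossings.
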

\begin{proof}%[Sketch of Proof]
This follows from classical transversality theory, see e.g. \cite[\S~3]{hirsch}. 
Note that $\UU$ provides a stratification of $\Sigma$, where the regular points of $\CC$ form the $1$-dimensional stratum $\Sigma_{(1)}$, while the double points and endpoints of $\UU$ form the $0$-dimensional one $\Sigma_{(0)}$. 

The first point of the lemma is clear: a ($1$--dimensional) path on $\Sigma$ can be perturbed arbitrarily slightly so that it is transverse to each stratum. 
Hence it avoids $\Sigma_{(0)}$ and intersects $\Sigma_{(1)}$ at finitely many points. 
%a compact $0$-dimensional set. 

For the second point, consider a homotopy between two paths with fixed endpoints. This homotopy can be perturbed so as to be transverse to the stratification for all but finitely many parameter values. These exceptional parameters correspond to isolated non-generic events, where a path is either tangential to $\Sigma_{(1)}$, or intersects $\Sigma_{(0)}$. The former corresponds to the local move $H_1$, and the latter corresponds to either $H_2$ or $H_3$.
\end{proof}
Given a generic path $\gamma$ on $\Sigma$, we associate unique
words $\widetilde{w}_\gamma $ and $w_\gamma$ of the regions of $\UU$ as follows. 
For the $k$-th intersection point between $\gamma$ and $\UU$ met when running along $\gamma$ according to its orientation,  
denote by $A_k$ the label of the cut-arc met at this point, and by $\varepsilon_k=\pm 1$ the local sign of this intersection point. 
This sign is $1$ if and only if the local orientation given by the orientation of $\UU$ and  $\gamma$ agrees with the ambient orientation of $\Sigma$. 
Then the word $\widetilde{w}_\gamma$ associated to
$\gamma$ is given by 
\[
\widetilde{w}_{\gamma}:=A_1^{\varepsilon_1}\cdots
A_{|\gamma\cap\UU|}^{\varepsilon_{|\gamma\cap\UU|}}, 
\]
whereas the \emph{normalized} word $w_\gamma$  is given by 
\[
w_\gamma:=A^{-|\gamma|} \widetilde{w}_{\gamma},
\]
where $A$ is the region where $\gamma$ starts, and
$|\gamma|$ is the sum of the exponents $\varepsilon_i$ in
$\widetilde{w}_{\gamma}$ such that $A_i$ is in the same connected
component as $\gamma$; see Figure \ref{fig:PathToWord} for an example.

\begin{figure}
  \[
  \dessin{1.5cm}{PathToWord}\ \leadsto\
  \begin{array}{l}
    \widetilde w_\gamma=BC^{-1}BDC^{-1}\\[.2cm]
    w_\gamma=ABC^{-1}BDC^{-1}
  \end{array}
  \left(\hspace{-.15cm}\begin{array}{c}
\textrm{\footnotesize assuming $C$ and $D$ are on the same connected}\\
 \textrm{\footnotesize component of $\Sigma$ as $A$, but $B$ is not}
    \end{array}\hspace{-.15cm}\right)
  \]
  \caption{An example of word associated to a path}
  \label{fig:PathToWord}
\end{figure}

Any generic path on $\Sigma$ gives rise in this way to elements $\widetilde{w}_{\gamma}$ and $w_{\gamma}$ in the group $G(\UU)$.

\begin{remarque}\label{rem:ark}
If $\gamma$ is a generic path joining two regions $A$ and $B$, then the Wirtinger
relations imply that $B=A^{\widetilde{w}_\gamma}=A^{w_\gamma}$ in $G(\UU)$. 
This observation has two noteworthy consequences:
\begin{enumerate}
\item any two meridians of a same connected component of $\Sigma$ are conjugate, hence $G(\UU)$ 
  is normally generated by any choice of one meridian on each connected component; 
\item if $\gamma$ is a generic loop based in a region $R$, then $[R,\widetilde{w}_{\gamma}]=[R,w_\gamma]=1$ in $G(\UU)$. 
\end{enumerate}
\end{remarque}

\begin{lemme}\label{lem:Longitudes}
If $\gamma$ and $\gamma'$ are two homotopic generic paths in $\Sigma$,
then $w_\gamma=w_{\gamma'}$ in $G(\UU)$.
\end{lemme}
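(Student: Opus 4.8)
The plan is to use Lemma~\ref{lem:GenMoves} to reduce the claim to a check on the three elementary moves $\textnormal H_1$, $\textnormal H_2$, $\textnormal H_3$ (plus the generic homotopies, which change neither the sequence of labels $A_k$ nor the signs $\varepsilon_k$, and hence leave both $\widetilde w_\gamma$ and $w_\gamma$ literally unchanged). Since any two homotopic generic paths are connected by a finite sequence of such moves, it suffices to show that each move preserves the class of $w_\gamma$ in $G(\UU)$. First I would record the easy bookkeeping: $|\gamma|$ depends only on the homotopy class of $\gamma$ through its endpoints together with the connected component data (it is essentially an algebraic intersection number of $\gamma$ with the portion of $\UU$ lying in $\gamma$'s own component), so $|\gamma|$ is unchanged under all the moves; therefore $w_\gamma = A^{-|\gamma|}\widetilde w_\gamma$ transforms the same way as $\widetilde w_\gamma$, and it is enough to analyze $\widetilde w_\gamma$ up to the appropriate conjugation. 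Actually, more carefully: $H_1$ and $H_3$ take place away from $\p\UU$ and away from the endpoints, so they do not affect $A$ or $|\gamma|$; $H_2$ introduces a loop around a point of $\p\UU\setminus\p\Sigma$, i.e. around an internal endpoint, and here the second labeling condition of Definition~\ref{cond} is exactly what is needed.

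The heart of the argument is the move-by-move verification. For $\textnormal H_1$, inserting a small loop around a regular point of a cut domain labeled $R$ inserts a factor $R R^{-1}$ (or $R^{-1}R$) into $\widetilde w_\gamma$, which is trivial in $\bar F$ already. For $\textnormal H_3$, inserting a loop around a double point of $\UU$ inserts into $\widetilde w_\gamma$ a subword of the form $A^{\varepsilon} B^{\delta} A^{-\varepsilon} B^{-\delta}$ read off from the four quadrants around the crossing, where $A$, $B$ are the labels of the two cut domains meeting at that double point; the local picture at a crossing shows that one of the two arcs, say the $C$-labeled overpassing one, makes the two transverse passages of the other arc be $C$-adjacent in the sense of Definition~\ref{def:adjacent}, so the Wirtinger relation $B^{-1}A^{C}$ (or its variant, depending on which arc is which and on orientations) forces this inserted subword to be trivial in $G(\UU)$. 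This is precisely the point where the admissibility (first labeling condition) is used, via the Wirtinger relations defining $G(\UU)$. For $\textnormal H_2$, inserting a loop around an internal endpoint of a terminal cut arc labeled $A$ inserts a factor like $A^{\varepsilon} A^{-\varepsilon}$ — here the second labeling condition guarantees that \emph{both} strands of the little loop meet cut domains labeled by the \emph{same} region $A$ (the region the internal endpoint is adjacent to), so the factor is again trivial. In each case I would draw the local disk, enumerate the finitely many sign/orientation configurations, and observe that the inserted word is a product of (conjugates of) Wirtinger relators, hence $1$ in $G(\UU)$.

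Finally I would assemble these: given homotopic generic $\gamma, \gamma'$, write the connecting sequence from Lemma~\ref{lem:GenMoves}; along it, $\widetilde w$ changes only by inserting/deleting the trivial-in-$G(\UU)$ subwords analyzed above, and $|\gamma|$ is constant, so $w_\gamma$ and $w_{\gamma'}$ represent the same element of $G(\UU)$.

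The main obstacle I anticipate is not conceptual but organizational: getting the signs and the ``left/right underpassing'' conventions exactly right in the $\textnormal H_3$ case, so that the inserted subword is genuinely the relator $B^{-1}A^{C}$ (or an inverse/conjugate of it) rather than something off by a sign or a transposition — this requires matching the orientation conventions in Definition~\ref{def:adjacent} with the sign conventions $\varepsilon_k$ in the construction of $\widetilde w_\gamma$, and care is needed because a generic path may meet $\UU$ twice at the same point (Remark~\ref{rem:tulavaispasvuceluilahein?}). A secondary subtlety is making sure $|\gamma|$ is well-defined and move-invariant when $\p\Sigma \ne \emptyset$ and when several components are present; but since $|\gamma|$ only counts intersections with cut domains lying in $\gamma$'s own component and the moves $\textnormal H_1$, $\textnormal H_2$, $\textnormal H_3$ are all supported in small disks, this is routine once stated carefully.
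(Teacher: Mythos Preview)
Your analysis of move $\textnormal H_2$ contains a genuine error. A small loop going around an internal endpoint of a terminal cut arc crosses that arc exactly \emph{once}, not twice: the arc terminates at a point interior to the loop, so the loop meets it in a single transverse point. Hence $\textnormal H_2$ inserts a single letter $A^\varepsilon$ into $\widetilde w_\gamma$ (where $A$ is the label of the terminal arc), not a cancelling pair $A^\varepsilon A^{-\varepsilon}$. Correspondingly --- since by the second labeling condition $A$ is the region adjacent to the endpoint, hence on the same component as $\gamma$ --- the quantity $|\gamma|$ changes by $\varepsilon$ under $\textnormal H_2$. So your claim that $|\gamma|$ is a homotopy invariant is false: the portion of $\UU$ carrying same-component labels is not a cycle (precisely because of internal endpoints), and its algebraic intersection number with $\gamma$ is not preserved under homotopy across such an endpoint.

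The correct argument for $\textnormal H_2$, as in the paper, is the following. Write $\gamma=\gamma_1.\gamma_2$ with $\gamma_2$ starting in the region $A$ where the move takes place, and let $B$ be the starting region of $\gamma$. Then $w_\gamma=B^{s}\widetilde w_{\gamma_1}\widetilde w_{\gamma_2}$ while $w_{\gamma'}=B^{s-\varepsilon}\widetilde w_{\gamma_1}A^{\varepsilon}\widetilde w_{\gamma_2}$. The key input is Remark~\ref{rem:ark}: since $\gamma_1$ runs from $B$ to $A$, the Wirtinger relations give $A=B^{\widetilde w_{\gamma_1}}$ in $G(\UU)$, and substituting yields $w_{\gamma'}=B^{s-\varepsilon}\,\widetilde w_{\gamma_1}\big(B^{\varepsilon}\big)^{\widetilde w_{\gamma_1}}\widetilde w_{\gamma_2}=w_\gamma$. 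In other words, the normalization prefactor is not inert here: its change is exactly what cancels the inserted letter, and this cancellation is where the group relations of $G(\UU)$ enter for $\textnormal H_2$. Your treatments of $\textnormal H_1$ and $\textnormal H_3$ are essentially correct (for $\textnormal H_3$ the inserted word, after a preliminary $\textnormal H_1$, has the shape $(A^\varepsilon)^{C}B^{-\varepsilon}$ rather than a commutator in two letters, but your identification of the first labeling condition as the relevant input is right).
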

\begin{proof}
   It is sufficient to check this equality when $\gamma$ and $\gamma'$ differ by a one of the three moves of Lemma \ref{lem:GenMoves}.
  The case of move $\textnormal H_1$ is clear.

  In the case of move $\textnormal H_2$, $\gamma$ splits into $\gamma_1.\gamma_2$, where
  $\gamma_1$ starts at the same region $B$ as $\gamma$, while
  $\gamma_2$ starts at the region $A$ supporting the local move:
\[
\dessin{2cm}{Cas2}.
\]
  Then we have $w_\gamma=B^{s}\widetilde{w}_{\gamma_1}\widetilde{w}_{\gamma_2}$, while 
  $w_{\gamma'}=B^{s-\e}\widetilde{w}_{\gamma_1}A^\e \widetilde{w}_{\gamma_2}$ for some $s\in \Z$ and some $\e=\pm1$. 
  But, since $\gamma_1$ connects $B$ to $A$,
  we have $A=B^{\widetilde{w}_{\gamma_1}}$ (Remark \ref{rem:ark}) so that, in $G(\UU)$:  
    \[ w_{\gamma'}= B^{s-\e}\widetilde{w}_{\gamma_1}\big(B^\e\big)^{\widetilde{w}_{\gamma_1}}\widetilde{w}_{\gamma_2}=w_\gamma. \]
  
  In the case of move $\textnormal H_3$, we may assume up to some moves $\textnormal H_1$ that the overpassing cut-arc is oriented as shown below. Then $\gamma$ splits as follows into $\gamma_1.\gamma_2$:
  \[
   \dessin{2cm}{Cas3}.
  \]
  We have $w_{\gamma}=D^{s}\widetilde{w}_{\gamma_1}\widetilde{w}_{\gamma_2}$ and
  $w_{\gamma'}=D^{s}\widetilde{w}_{\gamma_1}(A^\e)^CB^{-\e}\widetilde{w}_{\gamma_2}$
  for some $s\in \Z$ and some $\e=\pm1$, where $A$, $B$ and $C$ are the labels as shown above, and $D$ is the starting region of $\gamma$. 
  But, by the first labeling condition, the pair $(A,B)$ is $C$--adjacent, 
  hence we have $A^CB^{-1}=1$ in $G(\UU)$.
\end{proof}

\subsection{Longitudes} \label{sec:Longitudes}

We will now define a notion of longitudes, associated to homotopy classes of paths on $\Sigma$. 

For each $i\in\{1,\ldots,n\}$, pick a basepoint $p_i\in\Sigma_i$ in the interior of some region $R_i$. 
This provides in particular a preferred set of normal generators for $G(\UU)$ (Remark \ref{rem:ark}). 
Then, thanks to Lemma \ref{lem:Longitudes}, we can define the \emph{$i$-th loop-longitude map} 
\[
\function{\lambda^l_i}{\pi_1(\Sigma_i,p_i)}
{[\gamma]}{G(\UU)}{w_{\gamma}}, 
\]
where $[\gamma]$ is an element of $\pi_1(\Sigma_i,p_i)$ represented by a generic loop $\gamma$ based at $p_i$. 
Observe that $\lambda^l_i$ is actually a group homomorphism. Indeed, for
any two loops $\gamma_1,\gamma_2$ based at $p_i$, it follows from Remark \ref{rem:ark}~(2) that 
$w_{\gamma_1.\gamma_2}=R_i^{-|\gamma_1|-|\gamma_2|}\widetilde{w}_{\gamma_1}\widetilde{w}_{\gamma_2}=R_i^{-|\gamma_1|} \widetilde{w}_{\gamma_1}R_i^{-|\gamma_2|} \widetilde{w}_{\gamma_2}=w_{\gamma_1}w_{\gamma_2}$. 
\begin{defi}\label{def:long}
The elements in $\Im(\lambda^l_i)$ are called \emph{$i$-th (preferred) loop-longitudes} for $\UU$. 
They depend on the choice of $p_i$, 
hence are only well-defined up to a simultaneous conjugation of $R_i$ and all $i$-th loop-longitudes by some element $w\in G(\UU)$. 
\end{defi}

In general, $\UU$ has an infinite number of loop-longitudes, and it 
will sometimes be useful to consider only a finite generating set of these loops. 
\begin{defi}\label{def:syslong}
For each component $\Sigma_i$ of $\Sigma$, let $\{\gamma_{ij}\}_j$ be a collection of loops based at $p_{i}$ 
which represent a generating set for $\pi_1(\Sigma_i;p_i)$. 
The collection of associated preferred $i$-th loop-longitudes, each denoted by $w_{ij}:= w_{\gamma_{ij}}\in G(\UU)$, is called a \emph{system of loop-longitudes} for $\Sigma$.
\end{defi}
\noindent Observe that any system of  loop-longitudes determines the whole loop-longitude maps $\lambda^l_i$, since these maps are homomorphisms.  

\begin{exemple}\label{ex:wegottarunrunrunrunrunrunrunrunrunrunrun2}
Consider the cut-diagram $\UU_H$ of Figure \ref{fig:Zu-Bead-Ah}. 
The first component being simply connected, the first loop-longitude map $\lambda^l_1$ is trivial. 
For the second component,  a system of loop-longitudes is given by  the loops $(\alpha_2,\beta_2)$ represented in blue dashed lines in the figure. 
Since $\beta_2$ avoids all cut-arcs, we have $\lambda^l_2\left( [\beta_2] \right)=w_{\beta_2}=1\in G(\UU_{H})$. But since $\alpha_2$ intersects the $B$-labeled cut-arc positively, we have $\lambda^l_2\left( [\alpha_2] \right)=w_{\alpha_2}=B\in G(\UU_{H})$. 
\end{exemple}

If $\p \Sigma\neq\emptyset$, we shall also consider \quote{boundary-to-boundary} longitudes. 
For each $i\in\{1,\ldots,n\}$ such that 
$\p\Sigma_i\neq\emptyset$, we fix \emph{(boundary) marked points} $b_{ij}$, $j\in\{0,\ldots,\vert \p\Sigma_i\vert-1\}$ on each boundary component of $\Sigma_i$, and disjoint from $\UU$. 
This provides an ordering of the components of $\p\Sigma_i$, and the one containing $b_{i0}$ is regarded as a \quote{preferred} boundary component of $\Sigma_i$.  
\begin{defi}\label{def:arc-longitudes}
We set $\AA\big(\Sigma_i,\{b_{ij}\}_j\big)$ to be the set of homotopy classes of paths from $b_{i0}$ to some $b_{ij}$ for $j>0$, and we define the \emph{$i$-th arc-longitude map}
\[
\function{\lambda_i^\p}{\AA\big(\Sigma_i,\{b_{ij}\}_j\big)}{[\gamma]}{G(\UU)}{w_\gamma}.
\] 
Here, $[\gamma]$ is the element of $\AA\big(\Sigma_i,\{b_{ij}\}_j\big)$ represented by a generic path $\gamma$ from $b_{i0}$ to some $b_{ij}$. 
Elements of $\Im(\lambda^\p_i)$ are called \emph{$i$-th (preferred) arc-longitudes}. 
\end{defi}
We stress that the well-definedness of arc-longitudes requires the choice of the marked points $\{b_{ij}\}$, and that these are kept fixed at all times. 

\begin{defi}\label{rk:ToBeNamed}
For each component $\Sigma_i$, given a choice of an arc running from $b_{i0}$ to $b_{ij}$ for each $j>0$, the collection of associated preferred $i$-th arc-longitudes is called a \emph{system of arc-longitudes}. \\
Fixing a generic path $\gamma_i$ from $p_i$ to $b_{i0}$, there is a simply transitive left action of $\pi_1(\Sigma_i,p_i)$ on $\AA\big(\Sigma_i,\{b_{ij}\}_j\big)$, defined by $\tau \boldsymbol{\cdot} \gamma:=\tau^{\gamma_i}.\gamma$. 
Hence a system of arc-longitudes is  sufficient to recover the $i$-th arc-longitude map  $\lambda_i^\p$. 
\end{defi}

In the sequel, we shall simply use the terminology \emph{longitude}
when referring to either a  preferred loop- or arc-longitude. 
We denote by 
\[
\lambda_i:\, \pi_1(\Sigma_i,p_i)\sqcup \AA\big(\Sigma_i,\{b_{ij}\}_j\big)\rightarrow G(\UU)
\]
the \emph{$i$-th longitude map} defined by combining $\lambda^l_i$ and $\lambda^\p_i$.

\begin{remarque}\label{rem:topolong}
As far as the authors know, a general notion of (preferred) longitude for surface-links has not been clearly defined in the literature. 
It is however natural to define those as the image, in the fundamental group of the surface complement, 
of either a cycle or a (canonically closed) boundary-to-boundary path on the surface, 
pushed out of the surface in such a way that it has homological intersection zero with the surface.
In this setting, it is clear that the preferred longitudes of a topological cut-diagram agree with the preferred
longitudes of the underlying surface-link. 
\end{remarque}

\subsection{Peripheral system}\label{sec:periph}

The notions introduced so far can now be gathered into the following. 
\begin{defi}\label{def:periph}
A \emph{peripheral system} of $\UU$ is the data $\big(G(\UU);\{R_i\},\{\lambda_i\}\big)$ of the 
group of $\CC$ together with the choice, for each component $\Sigma_i$, of a meridian $R_i$ and
its longitude map $\lambda_i$.\\
Two peripheral systems 
$\big(G(\UU);\{R_i\},\{\lambda_i\}\big)$ and $\big(G(\UU');\{R'_i\},\{\lambda_i'\}\big)$ of cut-diagrams over $\Sigma$
are \emph{equivalent} if there exists a bijection $\rho$ from $\pi_1(\Sigma_i,p_i)\sqcup \AA\big(\Sigma_i,\{b_{ij}\}_j\big)$ to itself which 
is induced by a diffeomorphism of $\Sigma$, 
an isomorphism $\varphi:G(\UU)\rightarrow G(\UU')$ and elements $w_1 , \ldots , w_n\in G(\UU')$   
such that, for every $i\in\{1,\ldots,n\}$, we have $R'_i=\varphi\left(R_i\right)^{w_i}$ and
\[
  \lambda_i'(\gamma)=\left\{\begin{array}{ll}
                             \varphi\left(\lambda_i(\rho(\gamma))\right)^{w_i} & \textrm{for every \emph{loop} $\gamma \in \pi_1(\Sigma_i;p_{i})$,}\\
                             \varphi\left(\lambda_i(\rho(\gamma))\right) & \textrm{for every \emph{arc} $\gamma \in \AA\big(\Sigma_i,\{b_{ij}\}_j\big)$.}
                            \end{array}\right.
                          \]
\end{defi}
\begin{exemple}\label{ex:wegottarunrunrunrunrunrunrunrunrunrunrun3}
For the cut-diagram $\UU_H$ of Figure \ref{fig:Zu-Bead-Ah}, a peripheral system is given by the group $G(\UU_H)\simeq \mathbb{Z}\oplus \mathbb{Z}$ described in Example \ref{ex:wegottarunrunrunrunrunrunrunrunrunrunrun1}, with meridians $\{B,A\}$ (see Figure \ref{fig:Zu-Bead-Ah}), and the longitude maps $\{\lambda_1,\lambda_2\}$. Here the map $\lambda_1$ is trivial since the first component is simply connected; the second component being closed, it admits no nontrivial arc-longitude, so that the map $\lambda_2$ coincides with the loop-longitude map $\lambda^l_2$ of Example \ref{ex:wegottarunrunrunrunrunrunrunrunrunrunrun2}. 
\end{exemple}

\begin{prop}\label{prop:PeripheralSystemIsotopyInvariance}
 Equivalent cut-diagrams have equivalent peripheral systems. 
\end{prop}
\begin{proof}
It suffices to show that (equivalence classes of) peripheral systems for cut-diagrams are invariant under 
the seven cut-moves C$i$ ($i=0,\cdots,6$) given in Figure \ref{fig:IsotopyDim2}. 
We first consider move C0.
Let $\UU$, resp. $\UU'$, be the cut-diagram locally depicted on the left-hand side, resp. right-hand side, of this move in Figure \ref{fig:IsotopyDim2}. 
Denote by $L_1$, $L_2$ and $L_3$ the three depicted regions of $\UU$, from left to right. The three corresponding generators of $G(\UU)$ satisfy the Wirtinger relations $L_2=(L_1)^A$ and $L_2=(L_3)^A$, which in particular implies that $L_1=L_3$ in $G(\UU)$. Turning to $\UU'$, we likewise denote by  $R_1$, $R_2$ and $R_3$ the three depicted regions from top to bottom: these three generators of $G(\UU')$ satisfy $R_1=(R_2)^A$ and $R_3=(R_2)^A$, hence $R_1=R_3$ in $G(\UU')$. 
There is thus an isomorphism from $G(\UU)$ to $G(\UU')$, which is the identity on all regions disjoint from the disk $D$ supporting the move, and which maps $L_1$ to $R_2$ and $L_2$ to $R_1$. \\
It then remains to observe that preferred longitudes are preserved by this isomorphism, since they can always be represented by a path on $\Sigma$ which is disjoint from $D$. 
Some cut-arcs labels may change outside of $D$ under the move (Remark \ref{rem:labels}), but this will not affect the representing words for preferred longitudes thanks to the relations $L_1=L_3$ and $R_1=R_3$. \\[-0.28cm]

  \hspace{-.48cm}
  \parbox[l]{10.6cm}{
   \hspace{.48cm}Let us now consider move C5. 
We denote again by $\UU$ and $\UU'$ the cut-diagrams on the left- and right-hand side of the move, respectively. 
Denote by $R$ the depicted region of $\UU$.
As shown on the right, we also denote by $R$ the \lq outer region\rq \, of $\UU'$, and by $R_1$, $R_2$ and $R_3$ the three new regions. 
Then the presentation for $G(\UU')$ is obtained from that of $G(\UU)$ by adding these three generators, and adding four relations 
  }
  \parbox[r]{1.9cm}{
  \,\,\,\includegraphics[scale=0.85]{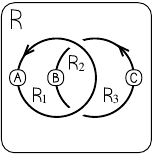}}
\[ \textrm{(1).}\,\,\, R = (R_1)^A \quad ;\quad \textrm{(2).}\,\,\, R_1 = (R_2)^B \quad ;\quad  \textrm{(3).}\,\,\, R = (R_3)^C\quad ;\quad  \textrm{(4).}\,\,\, R_3=(R_2)^A, \]
\noindent where $A$, $B$ and $C$ are the cut-arc labels as shown.\footnote{Note that we made here a counterclockwise choice of orientation, but other cases are completely similar. } 
\\ 
First, we note that (4) follows from the other three relations.
Indeed, on one hand by (3) we have $R_3 = C.R.C^{-1}$ and on the other hand we have
   $$ (R_2)^A = A^{-1}.R_2.A = A^{-1}B.R_1.B^{-1}A = A^{-1}BA.R.A^{-1}B^{-1}A, $$
\noindent where the second, resp. third, equality is given by (2), resp. (1). 
But the first labeling condition of Definition \ref{cond} implies that $(C,B)$ is $A$-adjacent: we thus have the relation $C=B^A$, and relation (4) can be deleted from the presentation of $G(\UU')$. 
\\
We are left with the first three relations, which can be used to delete the three generators $R_2$, $R_3$ and $R_1$ successively by Tietze transformations. 
Preferred longitudes are clearly preserved by the resulting isomorphism $G(\UU)\simeq G(\UU')$, since they can always be made disjoint from the supporting disk.

The verification for the other moves is completely similar to the case of C5; in particular move C6, although involving seven generators and twelve relations, requires no extra idea. 
\end{proof}

It follows in particular from Proposition \ref{cprop} that this
peripheral system is an invariant of surface-links. This is in
agreement with Remark \ref{rem:topolong}, as it corresponds to the topological peripheral system. 
In fact, when restricted to the surface-link case, we have a stronger invariance property, Theorem \ref{th:NilpotentConcordance} below, which uses the following notion. 

\begin{defi}\label{def:icient}
For any $q\ge 1$, the \emph{$q$-th nilpotent peripheral system} of $\UU$ is the data, 
associated to a peripheral system $\big(G(\UU);\{R_i\},\{\lambda_i\}\big)$, 
of the $q$-th nilpotent quotient $N_qG(\UU)$, together with 
the image of each $R_i$ in $N_qG(\UU)$ and the composite of each $i$-th longitude map $\lambda_i$ 
with the projection from $G(\UU)$ to $N_qG(\UU)$. 
This composite is called \emph{$i$-th nilpotent longitude map}. 
Definition \ref{def:periph} naturally induces a notion of equivalence for $q$-th nilpotent peripheral systems.
\end{defi}
It is well-known that equivalence classes of nilpotent peripheral systems 
of links in $S^3$ are concordance invariants \cite{casson}.
This remains true for surface-links.
\begin{theo}
\label{th:NilpotentConcordance}
  Equivalence classes of nilpotent peripheral systems for surface-links are invariant under concordance.
\end{theo}
\begin{proof} 
Let $S_0$ and $S_1$ be two concordant surface-links $\Sigma\hookrightarrow B^4$. 
Let $W$ be a concordance in $B^4\times [0,1]$ between $S_0$ and $S_1$, i.e., 
$W\cap (B^4\times\{\varepsilon\}) = S_\varepsilon\times \{\varepsilon\}~(\varepsilon=0,1)$.
We shall denote the images of $S_0$, $S_1$ and $W$ by the same letter. 

A standard Mayer-Vietoris argument implies that  for $\varepsilon\in\{0,1\}$, the natural inclusion map 
\[B^4\setminus S_\varepsilon\longrightarrow B^4\times [0,1]\setminus W\]
induces isomorphisms $H_k(B^4\setminus S_\varepsilon)\cong H_k(B^4\times [0,1]\setminus W)$ for $k=1,2$.   
More precisely, for $\varepsilon=0,1$ the Mayer-Vietoris sequence of the pair
\[X_\varepsilon=(B^4\times\{\varepsilon\})\cup \overline{U}(W)\quad \textrm{and}\quad Y=B^4\times[0,1]\setminus U(W)\] 
induces isomorphisms $H_k(X_\varepsilon\cap Y)\cong H_k(Y)~(k=1,2)$, 
where $U(W)$ is a tubular neighborhood of $W$ in $B^4\times[0,1]$ and $\overline{U}(W)$ is the closure of $U(W)$.
Since $X_\varepsilon\cap Y$ and $Y$ are homotopic to $B^4\setminus S_\varepsilon$ and 
$(B^4\times [0,1])\setminus W$ respectively, we have the desired isomorphisms. 
It then follows by Stallings Theorem \cite[Thm.~5.1]{Stallings} that the natural inclusion maps induce isomorphisms 
\[\varphi_\varepsilon: N_q(\pi_1(B^4\setminus S_\varepsilon))\stackrel{\cong}{\longrightarrow} N_q(\pi_1(B^4\times[0,1]\setminus W)) \textrm{, for all $q\geq 1$.}\]

Now let $\mathcal{C}_{\varepsilon}$ be a topological cut-diagram for $S_{\varepsilon}$ over $\Sigma$ ($\varepsilon=0,1$), and let 
\[
\lambda^{\varepsilon}_i:\, \pi_1(\Sigma_i,p_i)\sqcup \AA\big(\Sigma_i,\{b_{ij}\}_j\big)\rightarrow 
N_qG(\mathcal{C}_{\varepsilon})
\cong N_q\pi_1(B^4\setminus S_{\varepsilon}) 
\]
be the $i$-th nilpotent longitude map of $\mathcal{C}_{\varepsilon}$ (Definition \ref{def:icient}), where the last isomorphism uses Proposition \ref{rem:topopi1}. 
Since the concordance $W$ induces a diffeomorphism $f_W: S_0\rightarrow S_1$, the surface diffeomorphism $S_1^{-1}\circ f_W \circ S_0:\Sigma\rightarrow \Sigma$
induces a bijection 
\[\rho:\pi_1(\Sigma_i,p_i)\sqcup \AA\big(\Sigma_i,\{b_{ij}\}_j\big)\rightarrow \pi_1(\Sigma_i,p_i)\sqcup \AA\big(\Sigma_i,\{b_{ij}\}_j\big), \]
such that the following diagram commutes:
\[
\vcenter{\hbox{\xymatrix{
 \pi_1(\Sigma_i,p_i)\sqcup \AA\big(\Sigma_i,\{b_{ij}\}_j\big)
 \ar[dd]^(.48){\rho}  \ar[r]^(.48){\lambda^{0}_i}    &  N_qG(\mathcal{C}_{0})\cong N_q\pi_1(B^4\setminus S_{0}) \ar[d]_(.48)\cong^(.48){\varphi_0}  \\
 & 
N_q\pi_1(B^4\times[0,1]\setminus  W) \ar[d]_(.48)\cong^(.48){\varphi_1} \\
  \pi_1(\Sigma_i,p_i)\sqcup \AA\big(\Sigma_i,\{b_{ij}\}_j\big) \ar[r]^(.48){\lambda^{1}_i}    &  N_qG(\mathcal{C}_{1})
\cong N_q\pi_1(B^4\setminus  S_{1})  }}}.
\]
This implies that the nilpotent peripheral systems of $\mathcal{C}_0$ and $\mathcal{C}_1$ are equivalent. 
\end{proof}

\subsection{A Chen--Milnor Theorem for cut-diagrams}
\label{sec:ChenMaps}

In this subsection we give an analogue, for cut-diagrams, of the so-called Chen--Milnor presentation for the nilpotent quotients of fundamental groups of link complements \cite[Thm.~4]{Milnor2}. 

A central tool in establishing the Chen--Milnor presentation for links is an inductively defined sequence of maps $\eta_q$. 
These are not only a central theoretical tool, but they also provide 
an effective algorithm to compute the image of preferred longitudes in
nilpotent quotients, by expressing recursively each $i$-th meridian as a conjugate of a chosen meridian $R_{i}$. 
The major difference with the link case is that, in the context of cut-diagrams, 
there is no canonical way to express these conjugations; as a matter
of fact, we shall fix paths
joining any region to the basepoint of its connected component.

Fix a basepoint $p_i$ on each connected component $\Sigma_i$, away from $\UU$. 
Regions of $\UU$ living in $\Sigma_i$ will be denoted by $R_{ij}$, with $R_i:=R_{i0}$ the region containing $p_i$. 
We also denote by $F=\langle R_i \rangle$  and $\bar{F}=\langle
R_{ij} \rangle$ the free group generated by the $R_i$'s and the
$R_{ij}$'s, respectively.

\begin{defi}
A \emph{road network} $\alpha$  for $\UU$ is the choice, on each component $\Sigma_i$ of
$\Sigma$, of a collection of oriented generic paths $\alpha_{ij}$, called \emph{roads}, running from $p_i$ to a point in each region $R_{ij}$. 
We shall sometimes refer to $\alpha$ as a road network \emph{based at $\{p_i\}$}.
\end{defi}
Following Section \ref{sec:TopCombLemma}, a word $v_{ij}:=\widetilde{w}_{\alpha_{ij}}\in \bar{F}$
can be associated to each road 
$\alpha_{ij}$.
Notice that the relation  $R_{ij}=R_i^{v_{ij}}$ holds in $G(\UU)$ (see Remark \ref{rem:ark}). 

\begin{defi}\label{def:Chen} 
We define \emph{Chen homomorphisms} 
 $\eta_q^\alpha:\bar{F} \rightarrow F$ by
 setting, for all $i,j$ and $q\ge1$:
 \begin{gather*}
   \eta_1^\alpha(R_{ij}):=R_i,\\
\eta_{q+1}^\alpha(R_i):=R_i\ \textrm{ and } \
\eta_{q+1}^\alpha(R_{ij}):=R_i^{\eta_q^\alpha(v_{ij})}.
 \end{gather*}
\end{defi}

As the notation suggests, this sequence of homomorphisms \emph{a priori} depends on the choice of road network $\alpha$. 
However, for simplicity we will often drop the $\alpha$ from the notation.
\begin{remarque}
We use the terminology Chen homomorphisms  
for these maps $\eta_q$ . In the study of links, they indeed first appeared, implicitly, in the work of Chen \cite{Chen}.
 Specifically, in the notation of \cite[\S 4]{Chen}, we have $\eta_q=\varphi\circ \psi^{q-1}$. 
 The inductive definition for $\eta_q$ appeared in \cite{Milnor2}. 
\end{remarque}

Now for each component $\Sigma_i$ of $\Sigma$, pick a system of loop-longitudes
$\mathcal{L}_i(\Sigma):=\{w_{ij}\}_j$ associated with a collection of loops $\{\gamma_{ij}\}_j$ based at $p_{i}$ (see Definition \ref{def:syslong}).
The proof of the following key result is postponed to Section \ref{vie3}. 
\begin{theo}\label{alaChen} 
If $\UU$ is a cut-diagram over $\Sigma$, 
then for each $q\in \mathbb{N}$ we have the following presentation for the $q$-th nilpotent quotient $N_qG(\UU) $ of $G(\UU)$:
\[
\left\langle R_1,\ldots,R_n\ \ \Bigg|
  \begin{array}{l}
    F_q\, ; \, \big[R_i,\eta_q(w_{ij})\big]\textrm{ for all $i$
    and all }\ w_{ij}\in \mathcal{L}_i(\Sigma)
  \end{array}
\right\rangle.
\]
\end{theo}

\begin{remarque}
  Theorem \ref{alaChen} does not only provide a presentation
    for $N_qG(\UU)$ derived from a system of loop-longitudes. It
    also gives, via the Chen homomorphisms, an algorithm to
  compute representative words for any element in $N_qG(\UU)$, in particular for the longitudes in this quotient.
\end{remarque}

\section{Milnor invariants for cut-diagrams and surface-links}
\label{sec:milnor-numbers-cut}

The purpose of this section is to define Milnor invariants for any cut-diagram over a surface $\Sigma$. 
In the classical setting of links in the $3$--sphere, Milnor invariants are  
extracted from the coefficients in the Magnus expansion of the preferred longitudes, seen in the nilpotent quotients of the fundamental group of the link exterior. 
Now, as discussed in Section \ref{sec:PeripheralSystems}, the notion of 
longitudes for cut-diagrams comes in two flavors: loop-longitudes and arc-longitudes. 
As a matter of fact, although coming from a same global construction that associates a collection of \quote{Milnor numbers} 
to any generic path on $\Sigma$ (Section \ref{sec:milnor_path}), 
Milnor invariants defined from loop and arc-longitudes have rather different behaviors, 
and we shall discuss them separately (Sections \ref{sec:muloop} and \ref{sec:muarc}). 
Although we are going to define here invariants for the cut-diagram $\UU$, we emphasize that they are determined by its 
nilpotent peripheral systems, as defined in Section \ref{sec:periph}.  
\medskip 

Let us recall some notation: 
$\Sigma=\sqcup_{i=1}^n\Sigma_i$ is a surface, and $\UU$ is a 
cut-diagram over $\Sigma$. 
For all $i$, we pick a basepoint $p_i$ in the interior of $\Sigma_i$, which specifies an $i$-th meridian $R_i$; we denote by $F:=\langle R_i\rangle$ the free group generated by these meridians.
We also fix, for each $i\in\{1,\ldots,n\}$ such that $\p\Sigma_i\neq\emptyset$, a markedpoint
$b_{ik}$ on each boundary component of $\Sigma_i$ ($k\in\{0,\ldots,\vert \p\Sigma_i\vert-1\}$). 
Finally, we fix a system of loop-longitudes $\mathcal{L}_i(\Sigma)$ for each $i$
(Definition \ref{def:syslong}). 
\subsection{Milnor numbers associated with a longitude}\label{sec:milnor_path}
Fix an integer $q\ge 1$. 
By Theorem \ref{alaChen}, the meridians $R_k$ provide a generating set for $N_q G(\UU)$. 
Given a generic path $\gamma$ in $\pi_1(\Sigma_i,p_i)\sqcup \AA\big(\Sigma_i,\{b_{ij}\}_j\big)$ for some
$i\in\{1,\ldots,n\}$, we can thus consider a representative word $\omega_\gamma$ in $\{R_k^{\pm 1}\}$ for the associated $i$-th longitude in $N_qG(\UU)$.\footnote{This word is not to be confused with the normalized word $w_\gamma$, defined in Section \ref{sec:TopCombLemma}, 
which is a word in the larger alphabet $\{R_{kj}^{\pm 1}\}$.}  
In what follows we shall often blur the distinction between
the path $\gamma$ and the associated $i$-th longitude, seen either in $G(\UU)$ or in $N_qG(\UU)$.

\begin{defi}
 Let $\Z\langle\!\langle n \rangle\!\rangle$ denote the ring of  formal power series in $n$ noncommuting variables $X_1,\ldots,X_n$. 
 The \emph{Magnus expansion}  $E(\omega)\in \Z\langle\!\langle n \rangle\!\rangle$ of an element $\omega\in F$  is 
obtained by substituting each $R_i$ by $1+X_i$, and each $R_i^{-1}$ by $\sum_{k\ge 0} (-1)^k X_i^k$.\\
 For any sequence $I=j_1\cdots j_k$ of integers in $\{1,\ldots,n\}$, we denote by 
 $\mu_\UU(I;\omega)$ the coefficient of the monomial
 $X_{j_1}\!\cdots X_{j_k}$ in $E(\omega)$, meaning that
 \[
E(\omega) = 1+ \sum_{j_1\cdots j_k} \mu_\UU(j_1\cdots j_k;\omega) X_{j_1}\cdots X_{j_k},
\]
where the sum runs over all sequences of (possibly repeated) integers in $\{1,\ldots,n\}$. 
\end{defi}

In particular, we obtain in this way a collection of integers $\mu_\UU(I;\omega_\gamma)$, 
one for each sequence $I$ of integers in $\{1,\ldots,n\}$, from the representative word $\omega_\gamma$. 
Of course, the coefficients $\mu_\UU(I;\omega_\gamma)$  are not determined by the $i$-th longitude $\gamma$, 
as they depend on the choice of the representative word $\omega_\gamma$. 
But Theorem \ref{alaChen} tells us precisely how two different representatives may differ. Indeed we have a presentation 
\begin{equation}
\label{eq:Presentation}
N_q G(\UU) \cong \Big\langle R_i \ \big|\  F_q \,; \,
\big[R_i,\omega_{ij}\big] \textrm{ for all $i,j$} \Big\rangle,
\end{equation}
where, for each $i,j$, the $\omega_{ij}$'s are given word representatives in $\{R_k^{\pm 1}\}$ for 
the elements of the system of $i$-th loop-longitudes $\mathcal{L}_i(\Sigma)$. (We do not need here the explicit formula for $\omega_{ij}$, 
given in Theorem \ref{alaChen} in terms of the Chen homomorphisms.)

In order to extract invariants from the numbers $\mu_\UU(I;\omega_\gamma)$, we introduce the following indeterminacies.
\begin{defi}\label{def:tones}
For any sequence $Ii$ of integers in $\{1,\ldots,n\}$, we define 
\begin{itemize}
\item $m_\UU(Ii):=\gcd\big\{ \mu_\UU(I;\omega_{ij}) \textrm{ for all } j\big\}$; 
\item $\Delta_\UU(I)$ is the greatest common divisor of all  
 $m_\UU(J)$, where $J$ is any sequence obtained from $I$ by deleting at least one index, and permuting the resulting sequence cyclically.
\end{itemize}
\end{defi}
 \noindent We stress that the indeterminacies of Definition \ref{def:tones} only involve \emph{loop}-longitudes on $\Sigma$, and more precisely the chosen systems of loop-longitudes. We also stress that, while the definition of $m_\UU(Ii)$ only uses the system of $i$-th loop-longitudes $\mathcal{L}_i(\Sigma)$, the indeterminacy $\Delta_\UU(Ii)$ involves all systems $\mathcal{L}_j(\Sigma)$ such that index $j$ appears in the sequence $Ii$, since the definition involves cyclic permutation.
\begin{remarque}\label{rem:lesptitcas}
We set $m_\UU(I)=\Delta_\UU(I)=0$ for any sequence $I$ of length $1$ as a convention; this in particular implies that $\Delta_\UU(I)=0$ for any length $2$ sequence $I$.  
\end{remarque}

 \begin{prop}\label{prop:Milnor1}
 For any sequence $I$ of at most $q-1$ integers in $\{1,\ldots,n\}$
 and any $\gamma\in \pi_1(\Sigma_i,p_i)\sqcup
 \AA\big(\Sigma_i,\{b_{ij}\}_j\big)$ for some $i\in\{1,\ldots,n\}$,
 the residue class 
 \[
  \overline{\mu}_\UU(I;\gamma):=\mu_\UU\big(I;\omega_\gamma\big)\mod\Delta_\UU(Ii)
 \]
 does not depend on the representative word $\omega_\gamma$ for $\gamma$. 
\end{prop}
This allows for the following. 
\begin{defi}
  The classes $\overline{\mu}_\UU(I;\gamma)$ are called \emph{Milnor numbers} associated with $\gamma$. 
\end{defi}

\begin{remarque}\label{rem:KKOQQ}
The lower central series being decreasing, any representative word for 
an element of $G(\UU)$ in $N_qG(\UU)$ is also a representative word for its image in $N_{q'}G(\UU)$, for any $q'<q$. 
In particular, by taking $q$ to be sufficiently large, Milnor numbers are defined for
sequences of arbitrary length. 
\end{remarque}

\begin{proof}[Proof of Proposition \ref{prop:Milnor1}]
We begin with a definition that already appears in the work of Milnor in the classical link case, and which will also be a key tool in our context. 
For every $i\in\{1,\ldots,n\}$, we denote by $D_i$ the set of all power series 
$\sum v(i_1\cdots i_k) X_{i_1}\!\cdots X_{i_k}$ in $\Z\langle\!\langle n \rangle\!\rangle$ whose coefficients satisfy 
$v(i_1\cdots i_k)\equiv 0\mod \Delta_\CC(i_1\cdots i_ki)$ for all $i_1\cdots i_k$ with $k<q$. 
\\
Note that, by definition, the Magnus expansions of two words representing an $i$-th longitude yield the same residue classes up to degree $q$ 
if their difference is in $D_i$.
Moreover, following verbatim Milnor's argument, the following statements are straightforwardly checked.\footnote{Points (ii) and (iii) of Claim \ref{claim:Milnor} are not used in the present proof, but will be needed later.}
\begin{claim}\label{claim:Milnor}
  \begin{itemize}
  \item[]
  \item[(i)]  The set $D_i$ is a two-sided ideal of $\Z\langle\!\langle n \rangle\!\rangle$; see
    \cite[(16)]{Milnor2}.
  \item[(ii)]  For any $j\in\{1,\ldots,n\}$ and any representative word $\omega_i\in F$ for an $i$-th loop-longitude, 
    both $X_j\big(E(\omega_i)-1\big)$ and
    $\big(E(\omega_i)-1\big)X_j$ are elements of $D_i$; see
    \cite[(17)]{Milnor2}.
  \item[(iii)]  Let 
    $\mu_\UU(j_1\cdots j_k;\omega_i) X_{j_1}\!\cdots X_{j_k}$ be a term
    in $E(\omega_i)$, for some word $\omega_i\in F$ representing an  
    $i$-th loop-longitude, and let $\iota_j(X_{j_1}\!\cdots X_{j_k})$ be obtained from $X_{j_1}\!\cdots X_{j_k}$ by inserting at
    least one copy of $X_j$,  
    then $\mu_\UU(j_1\cdots j_k;\omega_i) \iota_j(X_{j_1}\!\cdots X_{j_k})$ is
    in $D_i$; see \cite[(18)]{Milnor2}.
 \item[(iv)]  For any $j$ and $k$, both $X_j\big(E(\omega_{jk})-1\big)$ and
    $\big(E(\omega_{jk})-1\big)X_j$ are in $D_i$; see \cite[(19)]{Milnor2}.
  \item[(v)]  For any element $w\in F_q$, $E(w)$ is equivalent to
    $1$ modulo $D_i$; see \cite[(20)]{Milnor2}.
   \end{itemize}
\end{claim}

Now, by (\ref{eq:Presentation}), two word representatives of an element
in $N_qG(\UU)$ differ by a finite sequence of insertions/deletions of 
\begin{enumerate}
\item $R_j^{\pm1}R_j^{\mp1}$ for some $j$;
\item $q$--iterated commutators;
\item commutators $[R_j,\omega_{jk}]$ for some $j$ and $k$.
\end{enumerate}
It is hence sufficient to check that  any such word insertion modifies the Magnus expansion by an element of $D_i$.
Case (1) is clear. Case (2) is a direct consequence of (v) and
(i). Case (3) can be shown, using (iv) and (i), with
the exact same argument as in \cite[pp.~294]{Milnor2}.
\end{proof}

We stress that a choice of one meridian per component has been made for defining $\overline{\mu}_\UU(I;\gamma)$; this will be addressed in the next subsection \ref{sec:bruh}. The definition of $\Delta_\UU(Ii)$ also uses this choice, as well as the choice of a system of loop-longitudes for each component: in the rest of this subsection, we show that $\Delta_\UU(Ii)$ is actually independent of these choices.
We will need the next technical lemma. 
\begin{lemme}\label{lem:Additivity}
 Let $I$ be a sequence of integers in $\{1,\ldots,n\}$, and let 
 $\omega_i\in F$ be a representative word of an 
 $i$-th loop-longitude. Then for any $w_1,w_2\in F$ we have 
   \[
   \mu_\UU(I;w_1\omega_i w_2) \equiv \mu_\UU(I;\omega_i)+\mu_\UU(I;w_1w_2)\mod\Delta_\UU(Ii).
    \]
\end{lemme}
\begin{proof}
  The Magnus expansion being a group homomorphism, we have  
  $$E(w_1\omega_i w_2) =  E(w_1w_2) + \Big(1+\big (E(w_1)-1 \big)\Big)\big( E(\omega_i)-1\big)\Big(1+\big(E(w_2)-1 \big)\Big).$$
  Using (ii) and (i) of Claim \ref{claim:Milnor}, this implies that 
  $E(w_1\omega_i w_2) -  E(\omega_i) - E(w_1w_2)+1$ is in the ideal $D_i$ used in the proof of Proposition \ref{prop:Milnor1},   which gives the conclusion. 
\end{proof}

As a first application of Lemma \ref{lem:Additivity} we have the following, which shows the independence of the indeterminacy $\Delta_\UU(Ii)$ on the choice of systems of loop-longitudes; we also state this result for $\gcd\big\{\Delta_\UU(Ii),m_\UU(Ii)\big\}$ in preparation for the next subsection (see Definition \ref{def:MilnorLOOP}). 
\begin{lemme}\label{lem:4.7}
For any sequence $Ii$ of indices in $\{1,\cdots,n\}$, both $\Delta_\UU(Ii)$ and $\gcd\big\{\Delta_\UU(Ii),m_\UU(Ii)\big\}$ are independent of the choices of systems of loop-longitudes $\mathcal{L}_j(\Sigma)$ ($j\in \{1,\ldots,n\}$) and of representative words  %$\omega_{jk}$ 
for the elements of $\mathcal{L}_j(\Sigma)$. 
\end{lemme}
\begin{proof}
The proof is by induction on the length of $Ii$. 
The base case of the induction, where $I$ consists of a single index, is clear from Remark \ref{rem:lesptitcas}, Proposition \ref{prop:Milnor1} and Lemma \ref{lem:Additivity}.
Now, if the result holds for all sequences of length $\le k$, then it holds as well for $\Delta_\UU(Ii)$ for any length $k+1$ sequence $Ii$, just by definition. Proposition \ref{prop:Milnor1} then implies that $m_\UU(Ii)$ does not depend on the choice of  representative words for the elements of $\mathcal{L}_j(\Sigma)$, for all $j$. 
Now suppose that we are given two different systems of $i$-th loop-longitudes $\mathcal{L}_i(\Sigma)$ and $\mathcal{L}'_i(\Sigma)$.
Any element $\omega'$ of $\mathcal{L}'_i(\Sigma)$ decomposes as a product of the elements $\{\omega_{ij}\}$ of $\mathcal{L}_i(\Sigma)$ and their inverses. Lemma \ref{lem:Additivity} can then be used to express $\mu_\UU(I;\omega')$ as a $\mathbb{Z}$-linear combination of all $\mu_\UU(I;\omega_{ij})$'s and $\Delta_\UU(Ii)$ (note in particular that Lemma \ref{lem:Additivity} implies that $\mu_\UU(I;\omega_{ij})+\mu_\UU(I;\omega_{ij}^{-1})\equiv 0 \mod \Delta_\UU(Ii)$). 
This shows that $\mu_\UU(I;\omega')$ is divisible by the gcd of $\Delta_\UU(Ii)$ and all $\mu_\UU(I;\omega_{ij})$'s, and the result follows by symmetry. 
\end{proof}

\subsection{Milnor invariants for cut-diagrams}\label{sec:bruh}

At this point, each generic path, and in particular each longitude of a cut-diagram, yields Milnor numbers. 
We now need to analyse how these numbers provide actual invariants of the cut-diagram. 
On one hand, the choice of meridians $\{ R_k\}$ must be discussed. 
On the other hand, comparing longitudes for two different cut-diagrams is in general not possible, as there is no natural identification between the elements of the fundamental group of the underlying surfaces. 

\subsubsection{Milnor invariants for loop-longitudes}\label{sec:muloop}

We first focus on the case where $\gamma$ is an element of $\pi_1(\Sigma_i;p_i)$ representing an $i$-th loop-longitude.  
By Lemma \ref{lem:Additivity}, the residue class $\overline{\mu}_\UU(I;\gamma)$ remains unchanged when replacing 
$\gamma$ by a conjugate. 
Actually, Lemma \ref{lem:Additivity} implies that, for fixed meridians $\{p_i\}$, Milnor numbers for loop-longitudes 
define group homomorphisms from $\pi_1(\Sigma_i,p_i)$ to $\fract{\Z}{\Delta(Ii)\Z}$. Since the latter is abelian, 
these homomorphisms factor through the abelianization $H_1(\Sigma_i)$.
We have the following. 
\begin{prop}\label{prop:Milnor2}
 Let $I$ be any sequence of integers in $\{1,\ldots,n\}$,  and let 
 $\gamma\in \pi_1(\Sigma_i;p_i)$ for some $i\in\{1,\ldots,n\}$. 
 Then $\Delta_\UU(Ii)$ and $\overline{\mu}_\UU(I;\gamma)$ remain unchanged under a basepoint change on the $j$-th component, 
 for any $j\in\{1,\ldots,n\}$.
 \end{prop}
\begin{proof}
  The proof is by induction on the length of $I$, the initial case being trivial. 
  The statement for the indeterminacy $\Delta_\UU(I)$ is an immediate consequence of the induction hypothesis. 
  
  As just observed, $\overline{\mu}_\UU(I;\gamma)$ only depends on the homology class of $\gamma$. However, we must also consider how changing a meridian $R_j$ modifies a word of $\big\{R_{k}^{\pm 1}\big\}$ representing $\gamma$. 
  More precisely, pick a word $\omega$ of $\big\{R_{k}^{\pm 1}\big\}$ representing $\gamma$, where $R_1,\ldots, R_n$ are the chosen meridians. 
  Let $R'_j$ be the new $j$-th meridian specified by the basepoint change, which  
  is a conjugate of $R_j$, and set $R'_k=R_k$ for all $k\neq j$.  
  A word $\omega'$ of $\{R'_k\}$ representing
  $\gamma$ is thus obtained from $\omega$ by replacing each $R_j$ by a conjugate of $R_j'$.\footnote{See Corollary \ref{rem:baze} for an explicit formula.}  
  Now, a conjugate $(R_j')^g$ of $R'_j$, for some word $g$ of $\{R'_k\}$, is mapped by the Magnus expansion to the product 
  $E(g)^{-1} (1+X_j)E(g)$. Therefore, $E(\omega')$ is obtained from $E(\omega)$ by replacing each occurence of $X_j$ by 
  $$ X_j + (E(g)^{-1}-1)X_j + X_j (E(g)-1) + (E(g)^{-1}-1)X_j(E(g)-1).$$
  It then follows from Claim \ref{claim:Milnor}~(iii) that 
  $E(\omega)-E(\omega')$ is in the ideal $D_i$ introduced in the proof of Proposition \ref{prop:Milnor1}, hence the result.
\end{proof}

Proposition \ref{prop:Milnor2} leads to the following.
\begin{defi}\label{def:MilnorMap}
 For any sequence $Ii$ of integers in $\{1,\cdots,n\}$, we define the  \emph{Milnor map}
   $$ M_\UU^{Ii}:\, H_1(\Sigma_i)\rightarrow \fract{\Z}{\Delta(Ii)\Z}, $$
 by sending any $\lambda\in H_1(\Sigma_i)$ represented by a closed loop $\gamma$ to the class $\overline{\mu}_\UU(I;\gamma)$.  
\end{defi}
Proposition \ref{prop:Milnor2} and Lemma \ref{lem:Additivity} readily give: 
\begin{theo}\label{th:Milnor2}
  The Milnor maps are well-defined homomorphisms. 
\end{theo}
Note that Milnor maps $ M_\UU^{Ii}$ are \emph{not} invariants of the equivalence class of the cut-diagram $\UU$, 
as they are not in general invariant under pre-composition by an automorphism of $H_1(\Sigma_i)$, see Remark \ref{rem:MilnorNilpotentMM} below. 
However, we can define numerical invariants of the cut-diagram $\UU$ from loop-longitudes, which essentially record the images of Milnor map. 
\begin{defi}\label{def:MilnorLOOP}
For any sequence $I$ of integers in $\{1,\ldots,n\}$, we set\footnote{Here we use the usual convention that $\gcd(0,0)=0$.}
$$   \newmu_\UU(Ii):= \gcd\big\{\Delta_\UU(Ii),m_\UU(Ii)\big\} $$
and call it \emph{Milnor loop-invariant} of $\CC$ associated with the sequence $I$ and the $i$-th component.
\end{defi}
\begin{remarque}\label{rem:first}
As in the classical settings, Milnor invariants are ordered by their \emph{length}, which is the number of indices in the indexing sequence $Ii$. 
For a cut-diagram $\CC$, let $k>0$ be the largest integer such that $m_\CC(J)=0$ for any sequence $J$ of less than $k$ indices. 
Then for any sequence $I$ of length $k$ we have $\Delta_\CC(I)=0$, and the 
\emph{first non-vanishing} Milnor (loop-)invariants of $\CC$ are the nontrivial invariants $\newmu_\CC(I)$, 
which are simply given by $\newmu_\CC(I)=m_\CC(I)\in \mathbb{N}$. 
\end{remarque}
We next prove that Milnor loop-invariants are determined by the equivalence class of the nilpotent peripheral system: 
\begin{theo}\label{rem:MilnorNilpotent}
Let $q\ge 2$. If two cut-diagrams $\mathcal{C}$ and $\mathcal{C}'$ have equivalent $q$-th nilpotent peripheral systems, then 
$\nu_{\mathcal{C}'}(Ii)=\nu_{\mathcal{C}}(Ii)$ 
for any sequence $Ii$ of $\le q$ indices in $\{1,\ldots,n\}$.\\
Hence Milnor loop-invariants are well-defined invariants of cut-diagrams up to equivalence. 
\end{theo}
\begin{proof}
Consider two cut-diagrams $\mathcal{C}$ and $\mathcal{C'}$ having equivalent nilpotent peripheral systems $\big(N_qG(\UU); \{R_k\}, \{\lambda_k\}\big)$ and $\big(N_qG(\UU');\{R'_k\},\{\lambda_k'\}\big)$.
Following Definition~\ref{def:periph}, 
there is an isomorphism $\varphi: N_qG(\UU)\rightarrow N_qG(\UU')$ and elements $w_1 , \ldots , w_n$ of $N_qG(\UU')$ such that, for each $k$, we have $R'_k=\varphi\left(R_k\right)^{w_k}$. 
There is also a bijection  $\rho$ from $\pi_1(\Sigma_i,p_i)\sqcup \AA\big(\Sigma_i,\{b_{ij}\}_j\big)$ to itself such that, for any $\gamma\in \pi_1(\Sigma_i,p_i)$, we have 
$\lambda_i'(\gamma)=\varphi\left(\lambda_i(\rho(\gamma))\right)^{w_i}$. 
Summarizing, $\lambda_i'(\gamma)$ is obtained from a conjugate of $\lambda_i(\rho(\gamma))$ by substituting each $R_{k}$ with $(R'_{k})^{w_k^{-1}}$. 
The situation is thus completely similar to the proof of Proposition \ref{prop:Milnor2}: we can likewise use Claim \ref{claim:Milnor}~(iii)
to conclude that $\Delta_{\mathcal{C}'}(Ii)=\Delta_{\mathcal{C}}(Ii)$ and $\overline{\mu}_{\mathcal{C}'}(I,\gamma)\equiv\overline{\mu}_{\mathcal{C}}(I,\rho(\gamma)) 
\mathrm{~mod~}\Delta_{\mathcal{C}'}(Ii)$.
This shows that  $\nu_{\mathcal{C}'}(Ii)=\nu_{\mathcal{C}}(Ii)$. 
The latter part of the statement then follows from Proposition \ref{prop:PeripheralSystemIsotopyInvariance}. 
\end{proof}
\begin{remarque}\label{rem:MilnorNilpotentMM}
The case of Milnor maps is slightly different: indeed, since $M_{\mathcal{C}'}^{Ii}(\gamma)=M_{\mathcal{C}}^{Ii}(\rho(\gamma))$, 
Proposition \ref{prop:Milnor2} gives us that $M_{\mathcal{C}'}^{Ii}=M_{\mathcal{C}}^{Ii}\circ \rho^*$, where 
$\rho^*$ is the automorphism of $H_1(\Sigma_i)$ induced by the bijection $\rho$ in the above proof. 
\end{remarque}

\subsubsection{Milnor invariants from arc-longitudes}\label{sec:muarc}

We now turn to the case where $\gamma$ is an $i$-th arc-longitude, meaning in particular that  $\Sigma_i$ is not closed. 
We actually assume here that \emph{each} component of 
$\Sigma$ has nonempty boundary. Note that this is the case for $2$-string links \cite{AMW} and concordances in $4$--space \cite{MY7}, which we aim at generalizing here, see Section \ref{sec:previous}.

Recall that for $i\in \{1,\cdots,n\}$, we have marked points $\{b_{ij}\}$ on each boundary component of $\Sigma_i$ ($j\in \{0,\ldots,\vert\partial \Sigma_i\vert-1\}$),
among which the point $b_{i0}$ indicates the preferred boundary component. 
This provides a \emph{canonical} choice of $i$-th meridian, materialized by the basepoint $p_i=b_{i0}$, and hence well-defined Milnor
numbers associated to each arc-longitude. In other words, as a corollary of Proposition \ref{prop:Milnor1}, 
Milnor numbers associated to arc-longitudes are well-defined invariants of cut-diagrams over $\Sigma$. 

In general, there are however several, possibly infinitely many different arc-longitu\-des running from $b_{i0}$ to some other marked point $b_{ij}$. 
But using the action of loop-longitudes noted in Definition \ref{rk:ToBeNamed}, all arc-longitudes can be recovered from a single one.
More precisely, for any two arcs $\gamma, \gamma'$ running from $b_{i0}$ to $b_{ij}$, 
there exists some $\tau\in \pi_1(\Sigma_i,b_{i0})$ such that $\gamma'= \tau\cdot\gamma$ (see Definition \ref{rk:ToBeNamed}). 
By Lemma \ref{lem:Additivity}, we then have for any sequence $I$ of integers in $\{1,\ldots,n\}$,
$$ \mu_\UU(I;\gamma') \equiv \mu_\UU(I;\gamma) + \mu_\UU(I;\tau) \mod\Delta_\UU(Ii). $$ 
Since $\tau$ decomposes as a product of elements in a system of loop-longitudes (and their inverses), Lemma \ref{lem:Additivity} further implies that 
$$\mu_\UU(I;\gamma') \equiv \mu_\UU(I;\gamma) \mod \newmu_\UU(Ii), $$ 
where 
$\newmu_\UU(Ii)=\gcd\big\{\Delta_\UU(Ii),m_\UU(Ii)\big\}$ was introduced in Definition \ref{def:MilnorLOOP}.

We can thus define intrinsic Milnor invariants from arc-longitudes, as follows. 
\begin{defi}\label{def:milnorARC}
For any sequence $I$ of integers in $\{1,\ldots,n\}$, we set
$$ \newmu^\p_\UU(I;ij) := \mu_\UU(I;\gamma_{ij})\mod \newmu_\CC(Ii), $$
\noindent where $\gamma_{ij}$ is any path from $b_{i0}$ to $b_{ij}$ ($i\in \{1,\cdots,n\}$, $j\in \{0,\ldots,\vert\partial \Sigma_i\vert-1\}$). 
We call it \emph{Milnor arc-invariant} of $\CC$ associated with the sequence $I$ and the $j$-th boundary component of $\Sigma_i$. 
\end{defi}
\begin{remarque}\label{rem:oulade}
If component $\Sigma_i$ has less than two boundary components, there is no nontrivial $i$-th arc longitude and thus no nontrivial Milnor arc-invariant. 
If  $\Sigma_i$ has exactly two boundary components, we can unambiguously denote by $ \newmu^\p_\UU(I;i):=\newmu^\p_\UU(I;i1)$ 
the Milnor arc-invariants extracted from any path running from the preferred boundary component to the other one.
\end{remarque}
Since the notion of equivalence of peripheral systems fixes the boundary (Definition \ref{def:periph}), the proof of Theorem \ref{rem:MilnorNilpotent} also provides the following.
\begin{theo}\label{rem:MilnorNilpotent2}
If two cut-diagrams $\mathcal{C}$ and $\mathcal{C}'$ have equivalent $q$-th nilpotent peripheral systems, then $\newmu^\p_\UU(I;ij) \equiv \newmu^\p_{\UU'}(I;ij)\mod \newmu_\CC(Ii)$ for any sequence $Ii$ of $\le q$ indices in $\{1,\ldots,n\}$ and any $j\in \{1,\ldots,\vert \partial \Sigma_i\vert-1\}$.\\
Hence Milnor arc-invariants are well-defined invariants of cut-diagrams up to equivalence. 
\end{theo}

\subsection{Milnor invariants of surface-links}\label{sec:lair}

 We call \emph{Milnor invariants} of a cut-diagram $\UU$ the collection of invariants defined in Sections \ref{sec:muloop} and \ref{sec:muarc}, 
namely Milnor loop-invariants $\newmu_\UU(Ii)$ and Milnor arc-invariants $\newmu^\p_\UU(I;ij)$, 
for all indices $i,j$ and all sequences of indices $I$. 

\begin{defi}\label{def:MilnorKnottedManifolds}
Milnor maps $M_{S}^{Ii}$ and  Milnor invariants $\newmu_S(Ii)$ and $\newmu^\p_S(I;ij)$ of a surface-link $S$ are the corresponding invariants for any (topological) cut-diagram associated to $S$.
\end{defi}

By Theorems \ref{rem:MilnorNilpotent} and \ref{rem:MilnorNilpotent2}, Milnor invariants only depend on the nilpotent peripheral systems.
We thus have the following, as a consequence of Theorem \ref{th:NilpotentConcordance}.
\begin{cor}\label{cor:MilnorConcordance}
  Milnor invariants of surface-links are concordance invariants.
\end{cor}

\begin{remarque}\label{pourlesgolmons}
Of course in the case of a $2$--link, there is no nontrivial loop-longitude, hence all Milnor loop-invariants vanish. 
Milnor arc-invariants are likewise undefined, see Remark \ref{rem:oulade}.
More generally, if the $i$-th component of a surface-link $S$ is spherical, then Milnor loop-invariants $\newmu_\UU(Ii)$ vanish for all sequences $I$.
\end{remarque}

Let us give a couple of basic examples; more can be found in Section \ref{sec:appli}.
We address Milnor loop-invariants in Examples \ref{ex1} and \ref{ex2}, then Milnor arc-invariants in Example \ref{ex3} .

\begin{exemple}\label{ex1}
Recall from Examples  \ref{ex:wegottarunrunrunrunrunrunrunrunrunrunrun1},  \ref{ex:wegottarunrunrunrunrunrunrunrunrunrunrun2} and \ref{ex:wegottarunrunrunrunrunrunrunrunrunrunrun3} the $2$-component surface-link $H$ of Figure \ref{fig:Zu-Bead-Ah}.  
Pick $R_1=B$ and $R_2=A$ as meridians. 
Since the first component is spherical, we have $\newmu_{H}(I1)=0$ for any sequence $I$ (Remark \ref{pourlesgolmons}). 
Now, has noted in Examples  \ref{ex:wegottarunrunrunrunrunrunrunrunrunrunrun2} and \ref{ex:wegottarunrunrunrunrunrunrunrunrunrunrun3}, 
a system of loop-longitudes for the second component is given by the loops $(\alpha_2,\beta_2)$ represented in blue dashed lines in the figure. 
We have $E(w_{\beta_2})=E(1)=1$ and  $E(w_{\alpha_2})=E(R_1)=1+X_1$.
It follows that $\omu_{\UU_{H}}(I;\beta_2)=0$ for all sequences $I$, while $\omu_{\UU_{H}}(1;\alpha_2)=1$. 
We thus obtain that $\newmu_{H}(12)=1$. 
\end{exemple}
\begin{remarque}
This number $\omu_{\UU_{H}}(1;\alpha_2)=1$ is merely the absolute value of the \emph{linking number} of the loop $\alpha_2$ with the first component of $H$.  
In general, $\newmu(ij)$ is likewise the gcd of the linking numbers of component $i$ with all loops forming a system of loop-longitudes for component $j$. 
\end{remarque}
\begin{exemple}\label{ex2}
Consider next the diagram of the $3$--component surface-link $B$ shown in Figure~\ref{fig:bead}, along with the associated cut-diagram $\UU_{B}$. 
  \begin{figure}
 \[
 \dessin{4cm}{Bead_B}
 \]
  \caption{Diagram of the surface-link $B$ and associated cut-diagram $\UU_{B}$}\label{fig:bead}
  \end{figure}
A system of loop-longitudes for the third component is given by  the loops $(\alpha_3,\beta_3)$ shown in dashed lines in the figure. 
As above, we have $\omu_{\UU_{B}}(I;\beta_3)=0$ for all sequences $I$. 
On the other hand, picking regions $R_1=E$ and $R_2=F$ as meridians for components 1 and 2, we have
\[ E(w_{\alpha_3})=E(R_1R_2R_1^{-1}R_2^{-1}) = 1 + X_1X_2-X_2X_1 + \textrm{ terms of degree $>2$.}
\]
Hence $\newmu_{B}(I)=0$ for any length 2 sequence $I$, and 
 \[
  \newmu_{B}(123)=\newmu_{\UU_{B}}(213)=1\, \, \textrm{ and }\,\,\newmu_{B}(I)=0 \textrm{ for $I\in\{312,132,231,321\}$}. 
 \]
 \end{exemple}
 \begin{remarque}\label{anginedepoitrine}
The surface-links of Examples \ref{ex1} and \ref{ex2} fit in an infinite family of surface-links which is investigated in Section \ref{sec:anginedepoitrine}. 
 \end{remarque} 
 \begin{exemple}\label{ex3}
Consider the diagram of the $3$--component surface-link $M$ shown in Figure \ref{fig:borro}, and   the cut-diagram $\UU_M$ for $M$ shown there. 
 \begin{figure}
\[
\dessin{3.8cm}{Borromeilhan}\quad\quad\quad\quad\quad\dessin{3.8cm}{CutBorro}
\]
 \caption{Diagram of the  surface-link $M$ and associated cut-diagram $\UU_M$}\label{fig:borro}
 \end{figure}
For each component of $\UU_M$, we choose the preferred boundary component to be the bottom one, 
so that meridians are given $R_1=A$, $R_2=D$ and $R_3=E$. 

It is easily seen that Milnor loop-invariants $\newmu_{M}(I)$ are zero for any sequence $I$, since
each component admits a system of longitude given by a single loop
running parallel to the lower boundary component  avoiding all cut-arcs. 
Milnor arc-invariants of $B$ are thus well-defined integers. 
A choice of arc-longitude is specified by an arc running from bottom to top on each component. 
Such a choice $\gamma_i$ ($i=1,2,3$) is represented with blue dashed lines in the figure. 
We clearly have $w_{\gamma_1}=DD^{-1}=1$ and $w_{\gamma_2}=1$ in $G(\UU_M)$. 
On the other hand, $w_{\gamma_3}= B^{-1}C = DA^{-1}D^{-1}A$ in $G(\UU_M)$. 
As a result, we obtain (in the notation of Remark \ref{rem:oulade}): 
 \[
  \newmu^\p_M(12;3)=-\newmu^\p_M(21;3)=1\, \, \textrm{ and }\,\,\newmu^\p_M(I;i)=0 \textrm{ for $i=1,2$ and any sequence $I$}. 
 \]
\end{exemple}

\subsection{Relation to previous works}\label{sec:previous}

We now discuss the relationship between our generalized Minor invariants with several related works.

\subsubsection{Milnor invariants of $2$-string links and concordances}\label{sec:previous2}

Milnor arc-invariants generalize the invariants defined in \cite{MY7,AMW,ABMW}. 
These earlier works indeed correspond to the case where $\Sigma$ is a union of annuli properly embedded in $B^4$. 
Note that in this setting, loop-longitudes are given by parallel copies of the boundary components,
so that both $\Delta_\UU(I)$ and $\newmu_\CC(I)$ involve (classical) Milnor
invariants of the link at the boundary. See \cite{MY7} for a complete description (note that in \cite{MY7}, the indeterminacy 
$\newmu_\CC(Ii) $ of Definition \ref{def:milnorARC} is denoted by $\overline{\Delta}(Ii)$).

\begin{remarque}\label{rem:Stallings0}
A key ingredient in \cite{MY7,AMW} is a result of Stallings \cite[Thm.~5.1]{Stallings} that provides the Chen-Milnor presentation for the nilpotent quotients of the fundamental group of the complement. 
In this paper, we use instead the notion of cut-diagram to derive Theorem \ref{alaChen}, see Section \ref{vie}.
This combinatorial method not only  offers more flexibility, but also provide an algorithm for expressing longitudes as words in the meridians,  
hence for computing Milnor invariants. 
\end{remarque}

\subsubsection{Orr invariants}\label{sec:orr}

Orr introduced in \cite{Orr1,Orr2} (based) concordance invariants $\theta_q^l$ for (based) codimension $2$ embeddings ($q\in \mathbb{N}$). 
The construction in the surface-link case can be roughly summarized as follows. 

Let $L$ be an embedding of an $n$--component surface $\Sigma$ into $S^{4}$, and let $G(L)$ be the fundamental group of its complement. 
Suppose that the nilpotent quotients satisfy $\fract{G(L)}{G(L)_q}\cong \fract{F}{F_q}$, where $F$ is the free group of rank $n$.
 
The natural homomorphisms $F\longrightarrow \fract{F}{F_q}$ and $G(L)\longrightarrow \fract{G(L)}{G(L)_q}\cong \fract{F}{F_q}$ induce continuous maps
\[f_q^n:K(F,1)\longrightarrow K\left(\fract{F}{F_q},1\right)\text{~~and~~}
\phi_q^n:S^{4}\setminus L\longrightarrow K\left(\fract{F}{F_q},1\right)\]
respectively, where $K(G,1)$ denotes an Eilenberg-MacLane space for a group $G$.
The map $\phi_q^n$ extends to a map $\overline{\phi_q^n}$ 
from $S^{4}$ to $\mathrm{Cone}(f_q^n)$, the mapping cone of the map $f_q^n$.
The fundamental class $\theta_q^n\in\pi_{4}\left(\mathrm{Cone}(f_q)\right)$ 
of $\overline{\phi_q^l}$ is an invariant of $L$ for each $q\ge 1$, and Orr shows that these $\theta_q^l$ actually are (based) concordance invariants.

The definition of Orr's invariants thus requires that $\fract{G(L)}{G(L)_q}\cong \fract{F}{F_q}$. 
Observe that this condition is fulfilled if $H_1(\Sigma)=0$, which is the (stronger) assumption taken in \cite{Orr1} for the definition. 
We note that $\fract{G(L)}{G(L)_q}\cong \fract{F}{F_q}$ if and only if all Milnor loop-invariants of $L$ with length $<q$ vanish. 
Hence Orr invariants actually are invariants for surface-links with vanishing Milnor loop-invariants.

Moreover, as discussed in  Remark \ref{rem:orr2}, 
we expect that Orr's construction can be adapted to create new link-homotopy invariants of surface-links.
\section{Non-repeated Milnor invariants and link-homotopy}\label{sec:SingCo}
Cut-diagrams were, for a large part, motivated by the study of surface-links and their Milnor \emph{concordance} invariants. 
But Milnor's work was first introduced in the context of \emph{link-homotopy} \cite{Milnor}, and it is natural to extend the theory of cut-diagrams to this setting. 

\subsection{Self-singular cut-diagrams}\label{sec:sylady}
Recall that a \emph{surface-link map} is generically an immersion, with only a finite number of singularities which are transverse double points. 
In diagrams, a singularity corresponds to the local model given on the left-hand side of Figure \ref{fig:FromSingBrokToSingCutDiag}. 
\begin{figure}
   \[
\dessin{3cm}{Singular_new}\quad \leadsto\quad \dessin{3cm}{CutSing}
\]
  \caption{From singular surface-link diagrams to  cut-diagrams ($\varepsilon=\pm$)}
  \label{fig:FromSingBrokToSingCutDiag}
\end{figure}
The procedure explained in Section \ref{sec:intuition}, that associates a cut-diagram to a surface-link diagram, extends naturally to surface-link maps. 
As Figure \ref{fig:FromSingBrokToSingCutDiag} illustrates, each singularity gives rise to a pair of internal endpoints, represented by a white dot $\circ$. 
Note also from the figure that a terminal cut-arc incident to a $\circ$ endpoint, is labeled by the region containing the \emph{paired} $\circ$ endpoint, which belongs to the \emph{same} connected component of the surface. This leads to the following general definition.
\begin{defi}\label{def:self-cut}
  A \emph{self-singular cut-diagram} over  $\Sigma$ is a diagram over $\Sigma$ whose set of internal endpoints is partitioned into $\bullet$ and $\circ$ endpoints, 
  and with a labeling of all cut-arcs by regions, satisfying the first labeling condition of Definition \ref{cond} and the following conditions on terminal cut-arcs: 
 \begin{itemize}
  \item[2)] a terminal cut-arc containing a $\bullet$ endpoint in some region $A$, is labeled by $A$;
  \item[2')] for each connected component of $\Sigma$, the set of
    $\circ$ endpoints is endowed with a partition into $2$-element subsets. 
    Each subset $\{p_1,p_2\}$ is such that, if $p_1$ lies in some region $X$ and has incident cut-arc labeled by $Y$,
    then $p_2$ lies in $Y$ with incident cut-arc labeled by $X$, and the local orientations at $p_1$ and $p_2$ are the same. 
  \end{itemize}
 \end{defi}
 \noindent In figures, we encode the local orientation at a $\circ$ endpoint by a sign, which is positive if and only if the incident cut-arc is locally oriented outwards.\footnote{When the self-singular cut-diagram is obtained from a diagram of a surface-link map, this is precisely the sign of the corresponding singularity. }
 We stress that the pairing of $\circ$ endpoints will not be used in this paper, but is crucial in \cite{AMY_kirk}, where self-singular cut-diagrams are used to define link-homotopy invariants of link maps. 
\medskip

Now, Roseman moves have been extended to the case of surface-link maps in \cite{AMW}, where three self-singular Roseman moves were introduced. 
It is an easy exercise to translate the three self-singular Roseman moves of \cite{AMW} into local moves on self-singular cut-diagrams (see \cite[Fig.~3.5]{AMY_kirk}), 
and to show that they are generated by the four local moves SC$i$ ($i=1,\cdots,4$)  
of Figure \ref{fig:singmoves2}. 
\begin{figure}
\centerline{$
  \begin{array}{ccc}
    \dessin{1.7cm}{MS1_1} \stackrel{\textrm{SC1}}{\longleftrightarrow} 
     \dessin{1.7cm}{MS1_2}&\hspace{.5cm}&\dessin{1.7cm}{MS3_1} \stackrel{\textrm{SC2}}{\longleftrightarrow} 
                                          \dessin{1.7cm}{MS3_2}\\[1cm]
     \dessin{1.7cm}{MS2_1} \stackrel{\textrm{SC3}}{\longleftrightarrow} 
    \dessin{1.7cm}{MS2_2ter}&&\dessin{1.7cm}{MS4_1} \stackrel{\textrm{SC4}}{\longleftrightarrow} 
    \dessin{1.7cm}{MS4_2}
  \end{array}
  $}
  \caption{The moves SC$i$ ($i=1,\cdots,4$) for self-singular cut-diagrams:\\{\footnotesize
      $A$ and $B$ are always regions of a same connected component; $\circ$ endpoints decorated by the same $\varepsilon$--sign are paired in the sense of Definition \ref{def:self-cut}~b).
      }}\label{fig:singmoves2}
\end{figure} 
\noindent As a consequence of \cite[Prop.~2.4]{AMW}, we have:
\begin{prop}\label{ctresprop}
 Two cut-diagrams of link-homotopic surface-links are related by a sequence of moves SC$i$ ($i=1,\cdots,4$) and equivalences of cut-diagrams.
\end{prop}
\begin{remarque}
A similar statement appears in \cite[Thm.~3.12]{AMY_kirk}. The difference lies in the definition of link-homotopy for surface-link maps, see Footnote \ref{pied}. However, both situations coincide for link maps, which is the main focus of \cite{AMY_kirk}.
\end{remarque}

\subsection{Reduced peripheral system}\label{sec:loug}

Let $\UU$ be a self-singular cut-diagram over $\Sigma$. 
Since internal endpoints play no role in its definition, the group $G(\UU)$ can be defined for $\UU$ in the same way as in Definition \ref{def:pi1}.  
However, in this case, longitudes are no longer well-defined elements in $G(\UU)$, and we consider the following quotient, first introduced by Milnor \cite{Milnor}.  
\begin{defi}\label{def:reduc}
The \emph{reduced quotient} $\nR G(\UU)$ of $G(\UU)$ is its quotient by the normal subgroup generated by all relations $[R,R^g]$, for all meridians $R$ and all $g\in G(\UU)$. 
\end{defi}

\begin{lemme}\label{lem:ReducedLongitudes}
If $\gamma$ and $\gamma'$ are two homotopic generic paths in $\Sigma_i$, for some $i\in\{1,\ldots,n\}$,
then $w_\gamma=w_{\gamma'}$ in $\fract{\nR G(\UU)}{N_{(i)}}$, where $N_{(i)}$ is the normal subgroup generated by $i$-th meridians. 
\end{lemme}
\begin{proof}
  The proof follows the exact same lines as the proof of Lemma \ref{lem:Longitudes}.
  The only difference concerns move $\textnormal H_2$: if the endpoint involved in the move is a $\circ$ endpoint, the terminal cut-arc may not be labeled by the region it is adjacent to, but by any
  conjugate of it. But in such a case, we actually have
  $w_{\gamma'}=\widetilde{w}_{\gamma'}=\widetilde{w}_\gamma=w_\gamma$ in
  $\fract{G(\UU)}{N_{(i)}}$, where we still denote by $N_{(i)}$ the normal subgroup of $G(\UU)$ generated by $i$-th meridians. The result follows, since $\fract{\nR G(\UU)}{N_{(i)}}$ is a quotient of $\fract{G(\UU)}{N_{(i)}}$.
\end{proof}
Now, for each component $\Sigma_i$ of $\Sigma$, let $R_i$ be a choice of meridian, specified by a basepoint $p_i$. 
For each $i\in\{1,\ldots,n\}$, we have by Lemma \ref{lem:ReducedLongitudes} a well-defined map 
\[
\lambda_{i,R}:\, \pi_1(\Sigma_i,p_i)\sqcup \AA\big(\Sigma_i,\{b_{ij}\}_j\big)\rightarrow \fract{\nR G(\UU)}{N_{(i)}}
\]
defined by composing the $i$-th longitude map $\lambda_i$ of Section \ref{sec:periph} with the
projection map from $G(\UU)$ to $\fract{\nR G(\UU)}{N_{(i)}}$. 
Elements in the image of $\lambda_{i,R}$ are called \emph{reduced $i$-th longitudes}. 
These are cosets of the form $\lambda\cdot N_{(i)}$, where
$\lambda$ is a word representing an $i$-th longitude.
\begin{defi}
The data $\big(RG(\UU);\{R_i\},\{\lambda_{i,R}\}\big)$ is called a
\emph{reduced peripheral system} of $\UU$.
\end{defi}
\noindent Definition \ref{def:periph} naturally induces a notion of equivalence for reduced peripheral systems. 

As noted in Remark \ref{rem:ark}, $G(\UU)$ is normally generated by the meridians $\{R_i\}$. 
It follows, by a result of Habegger and Lin \cite[Lem. 1.3]{HL}, that its reduced quotient is
nilpotent of order at most $n$, so that $\nR G(\UU)\cong N_q\nR G(\UU)$ for any integer $q\ge n$. 

As in Section \ref{sec:ChenMaps}, pick for each component $\Sigma_i$ of $\Sigma$, a 
system of loop-longitudes $\mathcal{L}_i(\Sigma)=\{w_{ij}\}_j$ in $G(\UU)$, and recall that $F:=\langle R_i\rangle$. 
We have the following Chen--Milnor type presentation for $\nR G(\UU)$, similar to Theorem \ref{alaChen}. 
\begin{theo}\label{alaChenSing} 
For every $q\geq n$, the reduced quotient of $G(\UU)$ has the following presentation:
\[
\textrm{R} G(\UU) \cong \left\langle R_1,\ldots,R_n\ \ \left|
  \begin{array}{l}
    \big[R_i,R_i^g\big]\textrm{ for all $i$, and all $g\in F$}\\[.1cm]
    \big[R_i,\eta_q(w_{ij})\big]\textrm{ for all $i$ and all $w_{ij}\in \mathcal{L}_i(\Sigma)$} 
  \end{array}
\right.\right\rangle.
\]
\end{theo}
The proof is an adaptation of that of Theorem \ref{alaChen}, and is given in Section \ref{vie4}. 

We next prove the following \lq reduced version\rq\, of Proposition \ref{prop:PeripheralSystemIsotopyInvariance}. 
\begin{prop}\label{prop:ReducedPeripheralSystemHomotopyInvariance}
The equivalence class of the reduced peripheral system is invariant under all moves SC$i$, $i=1,\cdots,4$ and equivalences of cut-diagrams.
\end{prop}
\begin{proof}[Sketch of proof]
Since the reduced peripheral system is determined by the peripheral system, invariance under equivalences of cut-diagrams follows from Proposition \ref{prop:PeripheralSystemIsotopyInvariance}.\\
Next, we outline the proof for move SC4; the other moves are treated in a similar way. 
Let $\UU$ and $\UU'$ be the cut-diagrams on the left- and right-hand side of the move, respectively. 
As in the proof of move C5 (Proposition \ref{prop:PeripheralSystemIsotopyInvariance}), denote by $R$ both the depicted region of $\UU$ and the \lq outer region\rq \, of $\UU'$, and by let $R_1$, $R_2$ and $R_3$ be the three new regions, from left to right.
The presentation for $G(\UU')$ is obtained from that of $G(\UU)$ by adding generators $R_1$, $R_2$ and $R_3$, and the four relations 
\[ \textrm{(1).}\,\,\, R = (R_1)^A \quad ;\quad \textrm{(2).}\,\,\, R_1 = (R_2)^B \quad ;\quad  \textrm{(3).}\,\,\, R = (R_3)^B\quad ;\quad  \textrm{(4).}\,\,\, R_3=(R_2)^A, \]
\noindent where $A$ and $B$ are two regions of the \emph{same} component of $\Sigma$, so that $AB=BA$ in the reduced quotient (Remark \ref{rem:ark}~(i)).  
Combining (3) and (1) gives $R_3= BA^{-1}.R_1.AB^{-1}$, while (2) gives $(R_2)^A=A^{-1}B.R_1.B^{-1}A$, showing that (4) follows from the three other relations in $\textrm{R}G(\UU')$. These three remaining relations, along with the three new generators, can then be deleted by Tietze transformations. 
The resulting isomorphism $\textrm{R}G(\UU)\simeq \textrm{R}G(\UU')$ preserves reduced longitudes, 
since these can always be made disjoint from a disk supporting the move.
\end{proof}

\subsection{Non-repeated Milnor invariants and link-homotopy invariance}
\label{sec:NonRepeatedMilnor}
We have the following \lq reduced version\rq\, of Proposition \ref{prop:Milnor1}. 
\begin{prop}\label{prop:ReadFromReduced}
  Let $\UU$ be a self-singular cut-diagram over $\Sigma$, and $i\in \{1,\ldots,n\}$. 
  Let $I$ be a sequence of pairwise distinct integers in
  $\{1,\ldots,n\}\setminus\{i\}$, and $\gamma$ be some path in $\pi_1(\Sigma_i)\sqcup\AA\big(\Sigma_i,\{b_{ij}\}_j\big)$. 
  Then the residue class 
  \[
   \overline{\mu}_\UU(I;\gamma):= \mu_\CC(I;\omega) \mod \Delta_\UU(Ii),
  \]
does not depend on the choice of word $\omega$ such that $\omega\cdot N_{(i)}$ is the reduced $i$-th longitude associated to $\gamma$.
\end{prop}
This allows for the following, which in particular defines non-repeated Milnor invariants of surface-links and surface-link maps. 
\begin{defi}\label{def:jenaimarre}
We call \emph{non-repeated Milnor invariants} of a self-singular cut-diagram $\UU$, all Milnor loop-invariants $\newmu_\UU(Ii)$ and Milnor arc-invariants $\newmu^\p_\UU(I;ij)$, for indices $i,j$ and sequences $I$ such that all indices in $Ii$ are pairwise distinct.
\end{defi}
\begin{proof}[Proof of Proposition \ref{prop:ReadFromReduced}]
By Theorem \ref{alaChenSing} and the definition of reduced longitudes, two choices of representative words 
are related by a finite sequence of insertions/deletions of
\begin{enumerate}
\item $R_j^{\pm1}R_j^{\mp1}$ for some $j$;
\item commutators $[R_j,\eta_q(w_{jk})]$ for some $j$ and $k$;
\item commutators $[R_j,R_j^g]$ for some $j$ and $g\in F$;
\item conjugates $R_i^g$ for some $g\in F$.
\end{enumerate}
It has already been checked in the proof of Proposition
\ref{prop:Milnor1} that the residue class of $\mu_\UU(I;\omega)$ is
invariant under the operations (1) and (2) (there, we denoted $\eta_q(w_{ij})$ by $\omega_{ij}$). 
For operation (3),  
suppose that two words $\omega$ and $\omega'$ differ by a factor $[R_j,R_j^g]$, for some index $j$ and some $g\in F$. 
A standard argument on the Magnus expansion shows that $E\left([R_j,R_j^g]\right)-1$ is a sum of monomials, 
each containing two occurrences of the variable $X_j$. 
This readily implies that $\mu_\UU(I;\omega)=\mu_\UU\big(I;\omega'\big)$ for any sequence $I$ of pairwise distinct integers. 
Concerning operation (4), since the sequence $I$ does not contain $i$, it is easily seen 
that $\mu_\UU(I;\omega)=\mu_\UU\big(I;\rho_{R_i}(\omega)\big)$, where the operator $\rho_{R_i}$ removes all occurrences of $R_i$. 
If $\omega'$ is obtained from $\omega$ by inserting a $R_i^g$ factor, then $\rho_{R_i}(\omega')$ is obtained
from $\rho_{R_i}(\omega)$ by inserting $\rho_{R_i}(gg^{-1})$, and the result then follows from case (1). 
\end{proof}

By arguments similar to those proving Theorems \ref{rem:MilnorNilpotent} and \ref{rem:MilnorNilpotent2}, we deduce that two self-singular cut-diagrams with equivalent reduced peripheral systems, have same non-repeated Milnor invariants. More precisely: 
\begin{theo}\label{ptittrucenplus}
If two self-singular cut-diagrams $\UU$ and $\UU'$ over $\Sigma$ have equivalent reduced peripheral systems, then the following hold for any non-repeated sequence $Ii$.
\begin{itemize}
  \item $\nu_{\mathcal{C}'}(Ii)=\nu_{\mathcal{C}}(Ii)$; 
  \item If $\Sigma$ has no closed component, then $\newmu^\p_\UU(I;ij) \equiv \newmu^\p_{\UU'}(I;ij)\mod \newmu_\CC(Ii)$. 
\end{itemize}
\end{theo}
Combining Theorem \ref{ptittrucenplus} with Proposition \ref{prop:ReducedPeripheralSystemHomotopyInvariance}, we obtain that non-repeated Milnor invariants of self-singular cut-diagrams are invariant under the moves SC$i$, $i=1,\cdots,4$. 
In particular, we have the following.
\begin{cor}\label{cor:lhinvariance}
Non-repeated Milnor invariants of surface-link maps are invariant under link-homotopy. 
\end{cor}
\begin{remarque}\label{rem:orr2}
Let $L$ be an $n$--component surface-link given by an embedding of $\Sigma$ into $S^{4}$. 
One can adapt the definition of Orr's invariant \cite{Orr1} reviewed in Section \ref{sec:orr}, 
but replacing the nilpotent quotient $\fract{F}{F_q}$ with the reduced free group $\nR F$. 
Under the assumption that $\nR G(L)\cong \nR F$, we obtain in this way \lq new' invariants $\theta_R^n
=[\overline{\phi_R^n}]\in\pi_{4}(\mathrm{Cone}(f_R^n))$ of $L$,
where 
\[ f_R^n:K(F,1)\longrightarrow K(\nR F,1)\text{~~, and~~}
\overline{\phi_R^n}:S^{4}\longrightarrow \mathrm{Cone}(f_R^l)\]
is the extension of $\phi_R^n:S^{4}\setminus L\longrightarrow K(\nR F,1)$.\\
Although this will not be further developed in the present paper, we expect that these invariants $\theta_R^l$ are (based) link-homotopy invariants.
Since the above assumption that $\nR G(L)\cong \nR F$ is equivalent to saying that all non-repeated Milnor invariants of $L$ vanish, 
we could obtain in this way  \lq new\rq\, link-homotopy invariants for surface-links with vanishing non-repeated Milnor invariants. 
This naturally raises the question whether all surface-links with vanishing non-repeated Milnor invariants are link-homotopically trivial; 
this is known to be the case for links in the $3$-sphere \cite{Milnor2}.
\end{remarque}

\section{Examples and applications}\label{sec:appli}

In this section, we gather several properties and topological applications of Milnor invariants of surface-links. 

\subsection{Realization result, and sliceness vs. null-concordance}\label{sec:anginedepoitrine}

We first give a general realization result for Milnor loop-invariants, 
which also highlights the subtle difference between sliceness and null-concordance for surface-links. 

Pick two positive integers  $m$ and $n$ with $n \geq 2$. 
Let $O_{n-1}$ be a copy of the $(n-1)$-component trivial $2$-link in the $4$-sphere $S^4$.
The fundamental group of the complement of $O_{n-1}$ is a free group generated by $x_1,\ldots ,x_{n-1}$, where each $x_i$ is represented by a meridian of the $i$-th component.
Consider a loop $\gamma_n^m$ in the complement of $O_{n-1}$ representing the element
\[ [x_1,[x_2,[x_3,\cdots,[x_{n-2},x_{n-1}]\cdots]]]^m \]
\noindent in the free group. 
Here, we may assume that $\gamma_n^m$ lies in a $3$-sphere in $S^4\setminus O_{n-1}$. 
Let $T_n^m$ be the boundary of a tubular neighborhood of $\gamma_n^m$ in this $3$-sphere.
We consider the surface-link $B_n^m$ defined by 
 \[ B_n^m:=O_{n-1}\cup T_n^m, \]
consisting of a single torus and $n-1$ spheres. 
Note that the surface-links $H$ (Figure \ref{fig:Zu-Bead-Ah}) and $B$ (Figure \ref{fig:bead}) coincide with $B_2^1$ and $B_3^1$, respectively. The following actually generalizes the first computations given in Examples  \ref{ex1} and \ref{ex2}.

\begin{prop}\label{prop:anginedepoitrine}
For any pair of positive integers $m$ and $n$ with $n \geq 2$, the $n$-component surface-link $B_n^m$ satisfies $\newmu_{B_n^m}(I)=0$ for any sequence $I$ of length at most $n-1$, and 
\[ \newmu_{B_n^m}(12\cdots n)=m.\] 
Consequently, $B_n^m$ and $B_n^{m'}$ are neither concordant nor link-homotopic, for any $m \ne m'$. \\
In particular, although $B_n^m$ is slice, it is neither null-concordant nor link-homotopically trivial.
\end{prop}
\begin{proof}
Since the first $n-1$ components of $B_n^m$ are spherical, we have $\newmu_{H}(Ii)=0$ for any sequence $I$ and any $i\in \{1,\ldots,n-1\}$ (Remark \ref{pourlesgolmons}). 
A system of loop-longitudes for the $n$-th component is given by two loops $(\alpha_n^m,\beta_n^m)$, similar to the loops $(\alpha_3,\beta_3)$ shown in Figure \ref{fig:bead} for $B=B_3^1$: the loop $\alpha_n^m$ is a $0$-framed parallel copy of the loop $\gamma_n^m$, and $\beta_n^m$ is a null-homotopic meridian of $\gamma_n^m$. 
We thus have $w_{\beta_n^m}=1$, while $w_{\alpha_n^m}=[x_1,[x_2,[x_3,\cdots,[x_{n-2},x_{n-1}]\cdots]]]^m$. The Magnus expansion of the latter is thus given by 
\[ E(w_{\alpha_n^m}) = 1 + m X_1X_2\cdots X_{n-1} + \textrm{ terms of degree $\ge n-1$.}
\]
The desired values for Milnor loop-invariants of $B_n^m$ follow. 

The fact that $B_n^m$ is a slice surface-link is clear by construction. In fact, it is a ribbon surface-link (see Section \ref{sec:Kernel} for a definition), which readily implies sliceness. The rest of the statement follows from Corollaries \ref{cor:MilnorConcordance} and \ref{cor:lhinvariance}.
\end{proof}
\begin{remarque}
We also note that the surface-link $B_n^m$ is Brunnian for all $m,n$ ($n\ge 2$), meaning that removing any component yields a trivial surface-link. 
\end{remarque}

\subsection{Milnor invariants of Spun links}\label{sec:spun}

Spun links refer to a classical construction due to Artin \cite{Artin}, which produces surface-links from classical tangles as follows. 
Consider in $\R^4$ the upper $3$--dimensional space $\R^3_+=\{(x,y,z,0) \, \vert\, x,y\in\R,\, z\ge 0\}$;  
the $(x,y)$--plane $P_{xy}=\{(x,y,0,0)\, \vert\, x,y\in\R\}$ sits as the boundary of $\R^3_+$. 
Given a compact $1$--manifold
$T$ properly embedded in $\R^3_+$, the \emph{Spun of $T$} is the surface-link $\Spun(T)$ obtained by spinning $T$ around $P_{xy}$ inside $\R^4\supset\R^3_+$: 
\begin{equation}\label{eq:spun}
 \Spun(T) = \left\{\big(x,y,z\cos \theta,z\sin \theta \big)\, \vert \, (x,y,z,0)\in T,\, \theta\in [0,2\pi] \right\}.
\end{equation}
 Observe that a closed component of $T$ yields a toroidal component of
 $\Spun(T)$, while a knotted arc in $T$ produces a spherical
 component.

 Using a similar spinning process in $\R^3$ around the line
 $\{(x,0,0)\, \vert\, x\in\R\}$, a diagram $D$ of $T$ provides a
 natural surface-link diagram of $\Spun(T)$, where each crossing
 yields a circle of double points without triple or branch point. And
 in turn, spinning the $1$--dimensional cut-diagram $\C_D$ associated to $D$ (in the sense of Section \ref{sec:1}) 
 provides a $2$--dimensional cut-diagram $\C_S$ for $\Spun(T)$, where each
 ($1$--dimensional) region of $\C_D$ yields a ($2$--dimensional) region of $\C_S$, and each 
 cut-point of $\C_D$ yields a closed cut-arc of $\C_S$ with same label and oriented
 according to the sign of the cut-point. 
 In particular, the toroidal
 components of $\C_S$ are nothing but the circle components of $\C_D$
 times $S^1$, while the spherical components of $\C_S$ are given by 
 the interval components of $\C_D$ times $S^1$ 
 (see Figure \ref{fig:Lm} for an example.)

Our Milnor invariants are well-behaved under the Spun construction, in the sense that they relate naturally to the (classical) Milnor invariants of $T$: 
\begin{lemme}\label{lem:Milnorspun}
Let $T$ be a tangle in $\R^3_+$ as above. 

 (1)\,If the $i$-th component of $T$ is a knot, then for any sequence $I$ we have 
$$ \newmu_{\Spun(T)}(Ii) = \textrm{gcd}\left\{\mu_T(Ii), \Delta_T(Ii)\right\}. $$
%\[ m_{\Spun(T)}(Ii)\equiv \vert \mu_T(Ii)\vert \mod \Delta_T(Ii). \]

 (2)\,If the $i$-th component of $T$ is an arc, then for any sequence $J$ we have 
$$ \newmu_{\Spun(T)}(Ji)=0. $$ 
\end{lemme}

\begin{remarque}\label{rem:realfirst}
When the $i$-th component of $T$ is a knot, we have in particular that the first non-vanishing Milnor loop-invariants of $\Spun(T)$ are given by the first non-vanishing Milnor invariants of $T$ as
$\newmu_{\Spun(T)}(Ii)=\vert \mu_T(Ii)\vert$.
\end{remarque}

\begin{proof}
Suppose that the $i$-th component of $T$ is a knot, so that the $i$-th component of $\Spun(T)$ is a torus.
Pick a diagram $D$ for $T$ and let $\UU_D$ be the
associated cut-diagram for $T$, with $i$-th component denoted by $\sigma_i$.
Let $\CC_S$ be the corresponding cut-diagram for $\Spun(T)$, as described above, with $i$-th component denoted by $\Sigma_i$.
A system of $i$-th loop-longitudes for $\Sigma_i$ is provided by a choice $(\alpha_i,\beta_i)$ of 
two loops given by 
\begin{equation}\label{eq:spun_basis}
 \alpha_i=\sigma_i\times \{\ast\}\quad\textrm{ and }\quad \beta_i=\{\star\}\times S^1
\end{equation}
for  some point $\ast$ on $S^1$ and  some point $\star$ on $\sigma_i$ which is not a cut-point. 
By construction, the loop $\beta_i$ is disjoint from all cut-arcs in $\Sigma_i$, hence we have that $\mu_{\CC_S}(I;\beta_i)=0$ for any sequence $I$, and 
$\newmu_{\Spun(T)}(Ii) = \textrm{gcd}\left\{\mu_{\CC_S}(Ii), \Delta_{\CC_S}(Ii)\right\}$. 

\noindent From the discussion preceding the statement of the lemma, 
it is easily seen that the Wirtinger presentation given by the diagram $D$ of $T$ 
coincides with that of $G(\CC_S)$, and that the word representing the $i$-th preferred longitude of $T$ 
coincides with $\lambda_i(\alpha_i)$ % or $\lambda_i(\alpha_i)^{-1}$ 
in $G(\CC_S)$.  
This shows that $\Delta_{\UU_S}(Ii)=\Delta_T(Ii)$ and 
$\mu_{\CC_S}(I;\alpha_i) = \mu_T(Ii)$, and part (1) follows. 

Part (2) of the statement is clear: if the $i$-th component of $T$ is an arc, then the $i$-th component of $\Spun(T)$ is a sphere and there is no nontrivial $i$-th loop-longitude (Remark \ref{pourlesgolmons}).
\end{proof}

\begin{remarque}\label{sec:real}
The Spun map can also be used to give another realization result for Milnor loop-invariants. While Section \ref{sec:anginedepoitrine} involves surface-links of a torus with several $2$-spheres, the following involves surface-links of tori. 
Consider the $(n+1)$--component link $M_{n+1}$ shown below. 
\[
 \dessin{2.15cm}{Milnor3}
\]
\noindent Milnor observed in \cite{Milnor} that all Milnor link invariants of length $\leq n$ vanish for $M_{n+1}$, and that 
for any permutation $\sigma$ in $S_{n-1}$, we have  
\vspace{.2cm}
$\,\, \mu_{M_{n+1}}(\sigma(1) \cdots \sigma(n-1)n\ n+1) = \left\{\begin{array}{cc}
1 & \textrm{if $\sigma=$Id,} \\
0 & \textrm{otherwise.}
\end{array}\right.$

\noindent Using Lemma \ref{lem:Milnorspun} and Remark \ref{rem:realfirst}, 
we directly obtain a similar realization result for the first non-vanishing Milnor loop-invariants $\newmu(1\cdots n+1)$ of surface-links, 
by considering $\Spun(M_{n+1})$. 
\end{remarque}

\subsection{A classification result up to concordance} 
\label{sec:Saito}

We now compare the relative strength of our Milnor invariants with other concordance invariants of surface-links in the literature. 

  For every $m\in\N$, we define $W_m$ as the spun surface-link obtained
  by spinning the tangle $X_m$ described in Figure \ref{fig:Lm}.
\begin{figure}
  \[
  W_m:=\Spun\left(\dessin{3cm}{SpunSaito}\right)\qquad \qquad \dessin{3.5cm}{CutSaito}
  \]
  \caption{Definition of the surface-links $W_m=\Spun(X_m)$, and a cut-diagram for $W_3$}
  \label{fig:Lm}
\end{figure}
Many of the known concordance invariants of surface-links cannot detect this family of Spun links.  
Indeed, for all $m\in \N$, one can check that 
\begin{itemize}
 \item the Sato--Levine invariant \cite{Sato} vanishes on $W_m$,
 \item Cochran's derivation invariants \cite{Cochran} all vanish on $W_m$, 
 \item Saito's invariants \cite{Saito} of $W_m$ are equal for all values of $m\in\N$, 
 \item since $W_m$ is link-homotopic to a union of trivially embedded torus and sphere for all $m\in\N$, 
all link-homotopy invariants vanish on $W_m$.
\end{itemize}
In contrast, using Milnor loop-invariants we show the following. 

\begin{prop}\label{prop:Saito}
  For any $m_1,m_2\in\N$, $W_{m_1}$ and $W_{m_2}$ are concordant if and only if $m_1=m_2$.
\end{prop}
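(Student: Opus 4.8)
The strategy is to compute the relevant Milnor loop-invariants of $W_m$ via Lemma \ref{lem:Milnorspun}, which reduces the problem to understanding the classical Milnor invariants of the tangle $X_m$. Since $W_m=\Spun(X_m)$, and the second component of $X_m$ (the one that gets spun to a torus) is a knot, Lemma \ref{lem:Milnorspun} tells us that the first non-vanishing Milnor loop-invariant of $W_m$ indexed by a sequence $Ii$ (with $i$ the toroidal component) equals $|\mu_{X_m}(Ii)|$, by Remark \ref{rem:realfirst}. So first I would read off from Figure \ref{fig:Lm} which sequence $I$ gives a nontrivial classical Milnor number of $X_m$, and check that this number depends on $m$ — presumably it is $\pm m$ or a linear function of $m$ with nonzero slope (this is the sort of Bing-doubling/clasper picture where the triple or higher linking number records the framing/clasp count $m$). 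I would verify this either directly from the tangle picture using the Magnus expansion of the relevant longitude, or by invoking whatever classical computation the tangle $X_m$ is built to realize.

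\textbf{Key steps in order.} (1) Identify the cut-diagram for $W_m$ shown on the right of Figure \ref{fig:Lm}, and pick the standard system of loop-longitudes $(\alpha_i,\beta_i)$ for the toroidal component as in \eqref{eq:spun_basis}; note $\beta_i$ avoids all cut arcs so contributes nothing, and $\alpha_i$ gives exactly the classical longitude of the $i$-th component of $X_m$. (2) Compute $\mu_{X_m}(Ii)$ for the distinguished sequence $I$: I expect the relevant invariant to be a Milnor number of length $3$ (a Sato–Levine-type or triple-linking-type quantity, but one not captured by the Sato–Levine invariant, Cochran's derivations, or Saito's invariants, consistent with the surrounding discussion), and I expect its value to be $m$ up to sign. (3) Conclude via Corollary \ref{cor:MilnorConcordance} (Milnor invariants are concordance invariants of cut-diagrams, hence of knotted surfaces by Corollary \ref{cor:HigherDimensionalMilnor}): if $W_{m_1}$ and $W_{m_2}$ are concordant then $\newmu_{W_{m_1}}(Ii)=\newmu_{W_{m_2}}(Ii)$, forcing $|m_1|=|m_2|$, hence $m_1=m_2$ since both are in $\N$. (4) For the converse, $m_1=m_2$ trivially gives isotopic, hence concordant, surfaces.

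\textbf{Main obstacle.} The only real work is step (2): pinning down precisely which tangle $X_m$ is drawn and computing its classical Milnor invariants, showing the chosen one is a nonconstant function of $m$ while simultaneously all the lower-length invariants vanish (so that the length-$3$ invariant is genuinely well-defined, i.e. $\Delta_{X_m}(Ii)=0$, making $\newmu_{W_m}(Ii)=|\mu_{X_m}(Ii)|$ an honest integer by Remark \ref{rem:first} and Remark \ref{rem:realfirst}). This is a finite, explicit Magnus-expansion computation on the longitude word of $X_m$ read off from the diagram, but it is the crux: once the number is shown to be $\pm m$, concordance invariance does the rest. A secondary point to check is that the cut-diagram in Figure \ref{fig:Lm} is indeed a valid (topological) cut-diagram for $\Spun(X_m)$ with the expected region/label structure, so that $G(\CC_S)\cong G(\CC_{X_m})$ and the longitude identification of Lemma \ref{lem:Milnorspun} applies verbatim; this is routine given the general discussion of the Spun construction in Subsection \ref{sec:spun}.
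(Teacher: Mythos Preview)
Your overall strategy is exactly the paper's: use Lemma~\ref{lem:Milnorspun} to reduce to classical Milnor invariants of $X_m$, identify the first non-vanishing one as a function of $m$, and conclude by concordance invariance (Corollary~\ref{cor:HigherDimensionalMilnor}).

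There is, however, a concrete error in step~(2): the relevant invariant is not length~$3$ but \emph{length~$4$ with repeated indices}. The tangle $X_m$ has only two components (one arc, one closed curve), so there is no ``triple-linking-type'' quantity available, and any length-$3$ sequence in $\{1,2\}$ already repeats. More to the point, closing the arc of $X_m$ yields the Whitehead $m$-double of the (negative) Hopf link, for which the linking number and all length-$3$ Milnor invariants vanish; the first non-vanishing one is $\mu_{X_m}(2211)=-m$. Via Lemma~\ref{lem:Milnorspun} and Remark~\ref{rem:realfirst} this gives $\newmu_{W_m}(2211)=m$ (equivalently $\newmu_{W_m}(2121)=m$), which is what the paper uses. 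The paper obtains this value either by direct Magnus-expansion computation on the cut-diagram of Figure~\ref{fig:Lm}, or by invoking a known computation for Whitehead doubles. Once you replace ``length~$3$'' by ``length~$4$'' and drop the triple-linking intuition, your steps (1)--(4) go through verbatim.
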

\begin{proof}
This is an immediate consequence of the fact that, for all $m\in \N$, $\newmu_{W_m}(2121)=m$. \\ 
This computation can be performed directly on a cut-diagram, as given on the right-hand side of Figure \ref{fig:Lm}  for $m=3$; 
although it is technically slightly more involved, this computation follows the exact same lines as Example \ref{ex2}. 
Alternatively, since $W_m$ is a Spun link, one can also use Lemma \ref{lem:Milnorspun} for this computation. 
Closing the second component of $X_m$ yields a $2$--component link 
which is the Whitehead $m$--double of the negative Hopf link, in the sense of \cite{MY_osaka}. 
Using \cite[Thm.~1.1]{MY_osaka}, we obtain that $\mu_{X_m}(I1)=0$ for any sequence $I$ of length $<3$, and that $\mu_{X_m}(2211)=-m$.
By Lemma \ref{lem:Milnorspun}, we deduce that  $\newmu_{W_m}(2211)=m$, as desired.
\end{proof}

\subsection{Link-homotopy classification results} 
\label{sec:PuncturedSpheres}

The next two applications provide link-homotopy classification results for Spun links and for knotted punctured spheres. 

\subsubsection{Spun links up to link-homotopy}
\label{sec:Cycles}

Milnor showed in \cite{Milnor} that links of $3$ components are classified up to link-homotopy by non-repeated Milnor invariants of length $\le 3$.  
Using the good behavior of our Milnor invariants under the Spun construction, we show the following. 
\begin{prop}\label{prop:spun}
  Let $L$ and $L'$ be $3$--component links. 
  Then $\Spun(L)$ and $\Spun(L')$ are link-homotopic if and only if $L$ is link-homotopic to either $L'$ or its mirror image.
\end{prop}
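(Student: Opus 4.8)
The plan is to reduce the statement to Milnor's link-homotopy classification of $3$-component links \cite{Milnor} via the Spun construction, using the good behavior of our invariants established in Lemma \ref{lem:Milnorspun} together with the link-homotopy invariance of non-repeated Milnor invariants (Proposition \ref{cor:lhinvariance}). One direction is easy: if $L$ and $L'$ are link-homotopic, then a link-homotopy between them spins to a link-homotopy between $\Spun(L)$ and $\Spun(L')$ (the spinning of a self-singular concordance is a self-singular concordance of the spun surfaces), and $\Spun$ sends a link to the same surface as its mirror image by the remark following \eqref{eq:spun}. So the content is in the converse.

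For the converse, suppose $\Spun(L)$ and $\Spun(L')$ are link-homotopic. By Milnor's theorem, it suffices to show that $L$ and $L'$ have the same non-repeated Milnor invariants of length $\le 3$ up to the ambiguity of passing to a mirror image. The non-repeated Milnor invariants of a $3$-component link are: the three linking numbers $\mu(ij)$ ($i<j$), and the triple invariant $\overline{\mu}(123)$, which is well-defined modulo $\gcd$ of the linking numbers. First I would record that all three components of $L$ are knots (Milnor's classification is for links of circles), so all three components of $\Spun(L)$ are tori, and a system of $i$-th loop-longitudes is given by the pair $(\alpha_i,\beta_i)$ as in \eqref{eq:spun_basis}, with $\beta_i$ disjoint from the cut-diagram. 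By Lemma \ref{lem:Milnorspun}, for each length-$2$ sequence $ij$ we get $m_{\Spun(L)}(ij) \equiv |\mu_L(ij)| \bmod \Delta_L(ij)$; since $\Delta_L(ij)=0$ for a $2$-component sublink this reads $m_{\Spun(L)}(ij) = |\mu_L(ij)|$, hence the linking numbers of $L$ are recovered up to sign from the link-homotopy invariants $m_{\Spun(L)}(ij)$ (equivalently, from the Milnor loop-invariants $\newmu_{\Spun(L)}(ij)$, which are non-repeated of length $2$). Next, for the triple index, Lemma \ref{lem:Milnorspun} gives $m_{\Spun(L)}(123) \equiv |\mu_L(123)| \bmod \Delta_L(123)$, and $\Delta_L(123)$ is the $\gcd$ of the linking numbers of $L$ (up to sign, which does not affect the $\gcd$); so the residue class of $|\mu_L(123)|$ modulo $\gcd$ of linking numbers is a link-homotopy invariant of $\Spun(L)$.

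Thus, if $\Spun(L)$ and $\Spun(L')$ are link-homotopic, then $L$ and $L'$ have equal $\{|\mu(12)|,|\mu(13)|,|\mu(23)|\}$ — more precisely, equal unordered... here one must be slightly careful: the invariants are attached to labeled components, and the spun surface remembers the labeling, so in fact $|\mu_L(ij)| = |\mu_{L'}(ij)|$ for each fixed pair $ij$, and $|\mu_L(123)| \equiv |\mu_{L'}(123)| \bmod \gcd$. It remains to upgrade these sign-ambiguous equalities to: $L$ is link-homotopic to $L'$ or its mirror. The mirror image of $L$ has linking numbers $-\mu_L(ij)$ and triple invariant $-\mu_L(123)$. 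The point to check is that the sign ambiguities in the three linking numbers are \emph{not independent}: passing to the mirror image changes all of $\mu(12),\mu(13),\mu(23),\mu(123)$ by the same overall sign. So I would argue as follows. If some linking number is nonzero, say $\mu_L(12)\ne 0$, then after possibly replacing $L'$ by its mirror we may assume $\mu_L(12)=\mu_{L'}(12)$; I then need to see that the remaining signs are forced. The hard part will be exactly this sign-bookkeeping: showing that the spun surface determines the linking numbers and triple invariant of the underlying link up to a \emph{simultaneous} global sign change, rather than up to independent sign changes on each entry. I expect this to follow either from a more careful analysis of the cut-diagram $\CC_S = \CC_X \times S^1$ and of how the orientation of $\Sigma_i$ (hence of $\alpha_i$) is pinned down by the spinning, so that $\mu_{\Spun(L)}(I;\alpha_i)$ (not just its absolute value) is a genuine invariant once an orientation convention is fixed — in which case the ``up to mirror image'' is precisely the ambiguity of choosing the orientation of the $z$-axis in \eqref{eq:spun} — or, failing a direct argument, from invoking Milnor's classification together with the observation that the only $3$-component links sharing the same $\{|\mu(ij)|\}$ and $|\mu(123)| \bmod \gcd$ but not link-homotopic to each other or their mirrors would have to differ by an ``independent sign flip'' that is realized by no self-homeomorphism of $S^3$, and then ruling such configurations out. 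Modulo this sign issue, the proof is a direct translation of Milnor's theorem through Lemma \ref{lem:Milnorspun} and Proposition \ref{cor:lhinvariance}.
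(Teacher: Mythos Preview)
Your argument has a genuine gap at exactly the point you flag as ``the hard part,'' and neither of your suggested fixes closes it. From the loop-invariants $m_{\Spun(L)}(ij)=|\mu_L(ij)|$ you recover the three linking numbers of $L$ only up to \emph{independent} signs, and similarly only $|\mu_L(123)|\bmod\gcd$. But links with, say, linking numbers $(1,1,1)$ and $(1,1,-1)$ have the same absolute values yet are not link-homotopic to one another or to each other's mirror image; to prove the proposition you must distinguish their Spuns, and your invariants cannot. Your fix (b) is circular: such ``independent sign-flip'' pairs do exist, so there is nothing to rule out at the level of classical links---what you must rule out is that their Spuns are link-homotopic, which is the statement you are trying to prove. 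Your fix (a) is closer in spirit but misstated: the quantity $\mu_{\Spun(L)}(I;\alpha_i)=\mu_L(Ii)$ is \emph{not} an invariant of $\Spun(L)$ as a knotted surface, because the class $\alpha_i\in H_1(\Sigma_i)$ depends on the specific spinning parametrization, and a link-homotopy between $\Spun(L)$ and $\Spun(L')$ need not carry $\alpha'_i$ to $\alpha_i$.

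The paper closes this gap by using the full Milnor \emph{maps} $M^{Ii}_F:H_1(\Sigma_i)\to\Z$ (for length~$2$ sequences, where $\Delta=0$), rather than merely their images $m(Ii)\Z$. A link-homotopy from $F'=\Spun(L')$ to $F=\Spun(L)$ carries $\alpha'_i$ to some class $\gamma_i=a_i\alpha_i+b_i\beta_i\in H_1(\Sigma_i)$, and link-homotopy invariance of Milnor maps gives $M^{ji}_F(\gamma_i)=M^{ji}_{F'}(\alpha'_i)$, i.e.\ $a_i\,\mu_L(ji)=\mu_{L'}(ji)$ for every $j\neq i$. If some linking number is nonzero, say $\mu_{L'}(12)\neq0$, then $\gamma_1$ must be a generator of $\im(M^{21}_F)=\mu_L(21)\Z$, forcing $a_1=\varepsilon=\pm1$; similarly $a_2=\delta=\pm1$. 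Now the symmetry $\mu(12)=\mu(21)$ gives $\varepsilon\mu_L(21)=\mu_{L'}(21)=\mu_{L'}(12)=\delta\mu_L(12)$, whence $\varepsilon=\delta$: the signs are \emph{tied together} because the same $a_i$ governs $\mu(ji)$ for all $j\neq i$. This yields $\mu_L(ij)=\varepsilon\mu_{L'}(ij)$ for all pairs, then $\Delta_L(123)=\Delta_{L'}(123)$, and finally $\mu_L(231)\equiv\varepsilon\mu_{L'}(231)\bmod\Delta$. The case where all linking numbers vanish is handled separately (then $\mu(123)\in\Z$ and both generate the same cyclic image). This tracking of the induced map on $H_1$ is precisely what your absolute-value argument is missing.
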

\begin{proof}
  The \quote{if} part of the statement is clear. 
  Indeed spinning a link-homotopy between $L$ and $L'$ provides a
  link-homotopy between $\Spun(L)$ and $\Spun(L')$, and  it is a well-known fact that the Spun of a link and of its mirror image are isotopic. 
  
  Next we suppose that $F=\Spun(L)$ and $F'=\Spun(L')$ are link-homotopic.
  Let $D$ (resp. $D'$) be a diagram for $L$ (resp. $L'$), and let $\UU$ (resp. $\UU'$) be the topological cut-diagrams for
  $F$ (resp. $F'$) associated to the spinning of $D$ (resp. $D'$), which is a cut-diagram over a disjoint union $\Sigma$ of three tori. %of tori $\Sigma_i~(i=1,2,3)$.
  As seen in Subsection \ref{sec:loug},  $\UU$ and $\UU'$ have equivalent reduced peripheral systems; this in particular provides a diffeomorphism $\varphi$ from $\Sigma$ to itself (see Definition \ref{def:periph}).
  
  Let $\alpha_i,\beta_i$ (resp. $\alpha'_i,\beta'_i$) be loops on the $i$-th component of $\Sigma$, %$\Sigma_i$
  corresponding to $D_i\times\{*\}$ and $\{\star\}\times S^1$ (resp. $D'_i\times\{*\}$ and $\{\star\}\times S^1$), respectively; here $D_i$ (resp. $D'_i$) denotes the $i$-th component of $D$ (resp. $D'$). 
  Denote by $\gamma_i$ the loop $\varphi(\alpha'_i)$, 
  %which corresponds to $\alpha'_i$ under the equivalence of reduced peripheral systems, 
  and let $m_i$ and $n_i$ be integers such that 
  $\gamma_i$ is homologous to $m_i \alpha_i+n_i\beta_i$. 
  
  Let $Ii$ be any non-repeated sequence of indices in $\{1,2,3\}$. 
Using the correspondance between longitudes of $L$ (resp. $L'$)
and those of $\UU$ (resp. $\UU'$) outlined in the proof of Lemma \ref{lem:Milnorspun},
we observe that $\Delta_\UU(Ii)=\Delta_L(Ii)$ and $\mu_\UU(I;\alpha_i)\equiv \mu_{L}(Ii) \mod \Delta_{L}(Ii)$, and likewise for $\UU'$ and $L'$.
  Now, since $\beta_i$ avoids all cut-arcs, we have $\mu_\UU(I;\beta_i)=0$. Hence by Lemma \ref{lem:Additivity},  we obtain: 
    $$ \mu_\UU(I;\gamma_i)\equiv m_i \mu_\UU(I;\alpha_i)\equiv m_i\mu_{L}(Ii) \mod \Delta_{L}(Ii). $$
   Moreover, because $F$ and $F'$ are link-homotopic, we have that 
   $\Delta_\UU(Ii)=\Delta_{\UU'}(Ii)$ (Corollary \ref{cor:lhinvariance}), and the equivalence of reduced peripheral system gives  
     \[\mu_{\UU'}(I;\alpha'_i) \equiv \mu_\UU(I;\gamma_i) \mod \Delta_{\UU}(Ii).\]
  All together, we obtain in this way that  
     \begin{equation}\label{eq:late}
      m_i \mu_{L}(Ii)\equiv \mu_{L'}(Ii) \mod \Delta_{L}(Ii).
     \end{equation}
   On the other hand, using again Corollary \ref{cor:lhinvariance} and (the proof of) Lemma \ref{lem:Milnorspun}, we have that 
     \begin{equation}\label{taface}
         \vert \mu_{L}(Ii)\vert \equiv \vert \mu_{L'}(Ii)\vert \mod \Delta_{L}(Ii).  
     \end{equation}
     
     Let us now focus on length $2$ Milnor invariants of $L$ and $L'$, that is, on linking numbers.
     Pick two indices $i,j$ in $\{123\}$. 
     Since $\Delta(ij)=0$ and $\mu(ij)=\mu(ji)$, 
     %we can use (\ref{eq:late}) and(\ref{taface}) 
     Equation (\ref{eq:late}) gives 
     $$ m_i \mu_L(ji)=\mu_{L'}(ji)=\mu_{L'}(ij)=m_j\mu_L(ij), $$
     showing that either $\mu_L(ij)=0$ or $m_i=m_j$. 
     Equation (\ref{taface}) then implies that we either have $\mu_L(ij)=\mu_{L'}(ij)=0$ 
     or $m_i=m_j=\pm 1$. Note that in both cases we have $\Delta_L(123)=\Delta_{L'}(123)$. 
     
     We now turn to length $3$ Milnor invariants. Recall that $\mu(ijk)\equiv \mu(jki)\equiv -\mu(ikj) \mod \Delta(ijk)$ for any indices $i,j,k$, see \cite{Milnor2}. There are two cases: 
     \begin{itemize}
     \item If $\mu_L(ij)=\mu_{L'}(ij)=0$ for all $i,j$, then $\Delta_L(123)=0$ and Equation (\ref{eq:late}) gives  
     $$ m_3 \mu_L(123)=\mu_{L'}(123)=-\mu_{L'}(132)=-m_2\mu_L(132)=m_2\mu_L(123), $$
     and likewise $m_3 \mu_L(123)=m_1\mu_L(123)$. %Hence either $\mu_L(123)=0$ or $m_1=m_2=m_3$. 
     Equation (\ref{taface}) then implies that either $\mu_L(123)=\mu_{L'}(132)=0$ 
     or $m_1=m_2=m_3=\pm 1$. 
     \item If, say, $\mu_L(23)\neq 0$, we have seen above that we directly have $m_2=m_3=\pm 1$. 
     \end{itemize}
     
     Summarizing, in all cases we have that $\Delta_L(123)=\Delta_{L'}(123)$ and, for some $m=\pm 1$, 
 $$m\mu_{L}(12)= \mu_{L'}(12),~ m\mu_{L}(13)= \mu_{L'}(13),~m\mu_{L}(23)= \mu_{L'}(23),$$
 and
$$ m \mu_{L}(123)\equiv \mu_{L'}(123) \mod \Delta_{L}(123).  $$
   
If $m=1$, then $L$ and $L'$ are link-homotopic by Milnor's classification result \cite{Milnor}.  
If $m=-1$, then the same holds for $L$ and the mirror image of $L'$. This concludes the proof
\end{proof}
 Of course the link-homotopy classification for $2$--component Spun links is also implied by Proposition \ref{prop:spun}, for example by adding a third trivial and unlinked component.  
 
For Spun links with an arbitrary number of components, we also have the following consequence of Lemma \ref{lem:Milnorspun}.

\begin{prop}\label{prop:spun2}
Let $L$ be an $m$--component link. The following are equivalent: 
  \begin{itemize}
   \item[(i)] $\Spun(L)$ is link-homotopically trivial; 
   \item[(ii)] $\newmu_{\Spun(L)}(I)=0$\, for any non-repeated sequence $I$;
   \item[(iii)] $L$ is link-homotopically trivial. 
  \end{itemize}
\end{prop}

\begin{proof}
 Implication $(i)\Rightarrow (ii)$ is a consequence of Corollary \ref{cor:lhinvariance}. 
 Condition $(ii)$ implies by Remark \ref{rem:realfirst} that all non-repeated Milnor invariants of $L$ are zero, which implies $(iii)$ according to Milnor \cite{Milnor}. 
 Implication $(iii)\Rightarrow (i)$ is clear, as above: spinning a link-homotopy between $L$ and the trivial link $U$ provides a link-homotopy between $\Spun(L)$ and 
 $\Spun(U)$.
\end{proof}

\subsubsection{Link-homotopy classification of knotted punctured spheres}\label{sec:lhkps}

We next mention another link-homotopy classification result derived from our construction, whose proof is given in a separate paper \cite[\S 18.4]{AMY_w}.

Let $n\ge 2$ be some integer, and $p_1,\ldots,p_n$ some positive integers. 
A \emph{knotted punctured spheres} is any surface-link which is a proper embedding of $\sqcup_{i=1}^n S^2_{\hspace{-.18em}p_i}$ into the $4$--ball $B^4$, 
where $S^2_{\hspace{-.18em}p_i}$ is the $2$--sphere with $p_i$ holes, and such that the boundary is mapped to a fixed trivial link with $\sum_i p_i$ components in $S^3=\p B^4$. 
Notice that taking $p_i=1$ for all $i$ yields the notion of linked disks introduced by Le Dimet in \cite{LeDimet},  
while taking $p_i=2$ for all $i$ produces $2$--string links, as studied in \cite{AMW}. 
We have the following classification result, which generalizes \cite[Thm.~4.8]{AMW}. 
\begin{theo}\label{thm:punk}
  Knotted punctured spheres are classified up to link-homotopy by Milnor invariants.
\end{theo}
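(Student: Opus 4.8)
\emph{The plan is to} prove the two implications separately, the reverse one being the substantial part of the statement. As usual for link-homotopy, only the \emph{non-repeated} Milnor invariants are relevant. The implication from link-homotopy to Milnor invariants is immediate from what is already established: by Proposition~\ref{cor:lhinvariance}, together with its evident analogues for Milnor maps and arc-invariants, all non-repeated Milnor invariants are link-homotopy invariants of knotted surfaces, hence of knotted punctured spheres. So everything reduces to showing that two knotted punctured spheres $S,S'$ with the same non-repeated Milnor invariants are link-homotopic. I would first recast this hypothesis group-theoretically: by Proposition~\ref{prop:ReadFromReduced} the non-repeated Milnor numbers of a knotted punctured sphere are read off from the Magnus expansions of the reduced loop- and arc-longitudes in the presentation of Theorem~\ref{alaChenSing}, and conversely, running Milnor's classical argument (that the Magnus coefficients of the longitudes recover the group relations up to the relevant indeterminacy) verbatim in the reduced setting shows that the collection of non-repeated Milnor invariants determines the reduced peripheral system $\big(\nR G(S);\{R_i\},\{\lambda_{i,R}\}\big)$ up to equivalence. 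Thus it suffices to prove: knotted punctured spheres with equivalent reduced peripheral systems are link-homotopic.

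\emph{A group structure.} Next I would endow the set $\mathcal{H}$ of link-homotopy classes of knotted punctured spheres with fixed underlying surface $\Sigma=\sqcup_{i=1}^n S^2_{p_i}$ with a group structure, generalizing the concatenation of $2$--string links of \cite{AMW} and of linked disks of \cite{LeDimet}. One fixes, for each $i$, a preferred boundary circle of $\Sigma_i$, stacks two copies of $B^4$ along a standard $3$--ball meeting the boundary spheres in disks containing the preferred circles, and joins the two surfaces by tubes along these circles; this makes $\mathcal{H}$ a monoid with identity the boundary-parallel (trivial) knotted punctured sphere. That $\mathcal{H}$ is in fact a group, with inverse of $S$ its reversed mirror image $\overline{S}$, follows from the usual \quote{movie} argument showing that $S\cdot\overline{S}$ is link-homotopically trivial. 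One checks moreover that the reduced peripheral system is multiplicative under this concatenation.

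\emph{Triviality of the reduced peripheral system.} Combining the last two points, it remains to show that a knotted punctured sphere $S$ whose reduced peripheral system is equivalent to that of the trivial one is link-homotopically trivial. I would argue by induction on $n$. For $n=1$: capping the boundary (a trivial link, bounding disjoint standard disks in $S^{3}$) produces a single closed connected surface in $S^{4}$, which is null-homotopic rel the capping region because self-intersections are freely allowed and finger/Whitney moves suffice; hence $S$ is link-homotopically trivial rel its boundary. For the inductive step, equivalence of the reduced peripheral system with the trivial one forces every loop- and arc-longitude of the last component $\Sigma_n$ to be trivial in $\nR G(S)/N_{(n)}$; concretely, the corresponding curves on $\Sigma_n$ bound singular disks in $B^{4}\setminus S$ whose only non-generic features are transverse self-intersections and transverse intersections with $\Sigma_n$ itself. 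Using these disks exactly as in the arguments of Milnor \cite{Milnor} and Habegger--Lin \cite{HL} --- the \quote{tube and shrink} construction, carried out here for a surface with boundary --- one performs a link-homotopy supported in a neighbourhood of $\Sigma_n$ that pulls $\Sigma_n$ off the remaining components and unknots it inside its own complement, reducing to the $(n-1)$--component situation; the induction hypothesis then finishes the argument.

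\emph{Main difficulty.} The crux is this last geometric step: promoting the algebraic vanishing of the reduced longitudes to an explicit link-homotopy. This needs a surface-with-boundary version of Milnor's disk-pushing argument, in which one must simultaneously handle the loop-longitudes (around the punctures) and the arc-longitudes (between boundary components) while keeping the homotopy fixed on $\p B^{4}$; the case $p_i\le 2$ treated in \cite{AMW} does not see the loop-longitude interactions that arise for general $p_i$. The cut-diagram machinery of this paper --- the reduced Chen--Milnor presentation of Theorem~\ref{alaChenSing} and the reduced combinatorial Stallings theorem of Corollary~\ref{rem:StallingsR} --- is expected to handle the algebraic bookkeeping, but the geometric input is genuinely new, which is why the proof is deferred to a separate paper.
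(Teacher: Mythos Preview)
The paper does not prove this theorem: Subsection~\ref{sec:lhkps} explicitly announces that the proof ``will appear in a forthcoming paper'', so there is no argument here to compare your proposal against. Your final sentence shows you noticed this, and what you wrote is really a plausibility sketch rather than a proof.

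Two substantive comments on your sketch. First, the monoid structure you describe does not close on $\mathcal{H}$ for general $p_i$: tubing two copies of $S^2_{p_i}$ along one boundary circle each yields an $S^2_{2p_i-2}$, so only the case $p_i=2$ (annuli, i.e.\ $2$--string links) gives back the same $\Sigma$. For $p_i=1$ your recipe even closes the surface up. Whatever concatenation is used in the forthcoming proof, it cannot be the one you wrote; getting a genuine group structure for arbitrary $p_i$ already requires a nontrivial choice of normal form near the boundary.

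Second, you identify ``loop-longitude interactions for general $p_i$'' as the main new difficulty, but the paper observes immediately after the statement that there are \emph{no} nontrivial loop-longitudes here: every class in $\pi_1(\Sigma_i)$ is a product of boundary-parallel loops, and since $\p S$ is the trivial link, each such loop bounds a disk in $S^3\setminus\p S\subset B^4\setminus S$ and hence gives the trivial element of $G(S)$. Consequently $\nR G(S)\cong\nR F$ with no commutation relations, the Milnor loop-invariants all vanish, and only the arc-invariants $\newmu^\p$ carry information. This is a simplification, not a complication, relative to your outline: the genuine geometric content of the deferred proof is the step where vanishing of the reduced arc-longitudes of $\Sigma_n$ in $\nR F/N_{(n)}$ is promoted to a link-homotopy splitting off that component.
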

Note that knotted punctured spheres do not contain any nontrivial loop longitude, since the boundary is assumed to be a trivial link.  Hence the classifying invariants of the statement are actually Milnor arc-invariants. 

\begin{remarque}
This result can actually be extended to a wider notion of knotted punctured spheres, where the boundary is assumed to form a \emph{slice} link. The slice condition ensures that loop longitudes are still all trivial as nilpotent group elements. A consequence of this fact is that all slice disks for a given slice link are link-homotopic, see \cite[Cor.~18.4.7]{AMY_w}. 
\end{remarque}

\subsection{An obstruction for ribbonness}
\label{sec:Kernel}
Recall that a ribbon immersed $3$--manifold in $4$--space is an immersion such that the only singularities are $2$--disks with two preimages, one embedded in the interior and the other properly embedded. 
A surface-link is \emph{ribbon} if its image is the boundary of some ribbon immersed $3$--dimensional handlebodies in $4$--space  
(note that the latter is in general not unique). 
Ribbon surface-links are the natural analogues of the notion of ribbon knots introduced by Fox in the classical dimension. 
Spun links are examples of ribbon surface-links.  

We now show how Milnor maps can be used to  obstruct ribbonness. 
Let $S$ be the image of an embedding of some surface $\Sigma=\sqcup_{i=1}^n \Sigma_i$ in $4$--space,
and let $Ii$ be some sequence of indices.  

\begin{defi}
The \emph{free kernel} of the Milnor map $M^{Ii}_S: H_1(\Sigma_i)\rightarrow \fract{\Z}{\Delta_\UU(Ii)\Z}$, is the maximal submodule $\Ker_0(M^{Ii}_S)\subset \Ker(M^{Ii}_S)$ 
such that $\fract{H_1(\Sigma_i)}{\Ker_0(M^{Ii}_S)}$ is torsion-free. 
\end{defi}

Since the Milnor map $M^{Ii}$ up to automorphisms of $H_1(\Sigma_i)$ is an invariant for nilpotent peripheral systems up to equivalence (Remark~\ref{rem:MilnorNilpotentMM}), it is a concordance invariant. In particular, the rank of the free kernel is a concordance invariant of surface-links 

Now, suppose that the surface-link $S$ is ribbon.
If the $i$-th component of $S$ has genus $g$, then any ribbon immersed handlebodies $H$ 
bounded by this component induces a rank $g$ submodule
 $\Ker\big(H_1(\Sigma_i)\hookrightarrow H_1(H)\big)$, which is clearly contained in the free kernel of any Milnor map on
 $H_1(\Sigma_i)$. This implies the following obstruction result for ribbonness. 
 \begin{prop}
   \label{prop:RibbonObstruction}
      Let $S$ be a surface-link whose $i$-th component has genus $g$. 
      If there exists a  (nonempty) set $\mathcal{I}$ of sequences ending with $i$ such that 
      $\rk\!\big(\cap_{I\in \mathcal{I}}\Ker_0(M^{I}_S)\big)< g$, then $S$ is not concordant to a ribbon surface-link.  
  Furthermore, if all sequences in $\mathcal{I}$ are
   non-repeated, then $S$ is not even link-homotopic to a ribbon surface-link.
 \end{prop}

Let us give a concrete application. 
For all $m\in\Z$, let $A_{\!m}$ be the surface-link shown on the left-hand side of Figure \ref{fig:NonRibbon}. 
It is obtained by spinning a $3$-component link as illustrated and, while spinning, moving the third unknot component $m$ times around the second one.
\begin{figure}
  \[
\hspace{-1cm}  \dessin{3cm}{NonRibbonBis}\ ; \ \dessin{4cm}{CutNonRibbon}
  \]
  \caption{The surface $A_{\!m}$ (left), and a cut-diagram for $A_{\!3}$ (right).}  \label{fig:NonRibbon}
\end{figure}
 The surface-link $A_{\!1}$ appears in \cite{AMW}, where it was proved that it is not link-homotopic to any ribbon surface. 
Using free kernels, we can prove the following.
\begin{prop}\label{prop:er}
  The surface-link $A_{\!m}$ is link-homotopic to a ribbon one if and only if $m=0$.
\end{prop}
\begin{proof}
  Since $A_{\!0}$ is a Spun link, it is a ribbon surface.  
  Now let $m\neq 0$. 
  For simplicity, we give the proof for $m=3$: the general case is handled in a strictly similar way.   
  The right-hand side of Figure \ref{fig:NonRibbon} 
  gives a cut-diagram $\UU_3$ for the surface-link $A_{\!3}$. 
  We denote by 
  $\alpha_3, \beta_3\in H_1(\Sigma_3)$ the cycles shown in dashed lines 
  on the cut-diagram of Figure \ref{fig:NonRibbon}.
  It is easily computed that
  $\Ker_0(M_{A_{\!3}}^{13})=\langle\beta_3\rangle$ and
  $\Ker_0(M_{A_{\!3}}^{23})=\langle\alpha_3\rangle$, so that
  $\rk\!\big(\Ker_0(M_{A_{\!3}}^{13})\cap \Ker_0(M_{A_{\!3}}^{23})\big)=0$. 
  Proposition \ref{prop:RibbonObstruction} then implies that $A_{\!3}$ cannot be link-homotopic to a ribbon surface.
\end{proof}

Recall from \cite{AMW} that $2$-string links are surface-links in $B^4$ of properly embedded annuli, with boundary a fixed unlink in $S^3$ (see Section \ref{sec:lhkps}). There is a natural $2$-dimensional analogue of the braid closure map, that closes a $2$-string link into knotted tori in $4$--space, see \cite[Rem.~2.2]{ABMW}. 
We have the following noteworthy consequence of Proposition \ref{prop:er}.
\begin{cor}\label{cor:closure}
The braid-closure map from $2$-string links to surface-links of  
tori in $4$--space, is not surjective for $n\ge 3$.
\end{cor}
\begin{proof}
Let $m\neq 0$, and assume by contradiction that the surface-link $A_m$  is the closure of a $3$-component $2$-string link $L$.
By \cite[Thm.3.5]{AMW}, $L$ is link-homotopic to a ribbon $2$-string link $L'$.
Hence $A_m$ is link-homotopic to the closure of $L'$, which is ribbon, a contradiction.
\end{proof}

\subsection{$k$-slice links}\label{sec:k-slice}
The notion of $k$-slice classical links was introduced independently by Orr \cite{Orr2} and 
Cochran \cite{Cochran_AMS}, as follows. 
An $n$-component link $L=K_1\cup\cdots\cup K_n$ in the boundary $\partial D^4$ of 
the $4$--ball $D^4$ is {\em $k$-slice} if there is a disjoint union  
$S=S_1\cup\cdots\cup S_l$ of properly embedded surfaces in $D^4$ with $\partial S_i=K_i~(i=1,\ldots,n)$, 
 such that the composition of the homomorphism $\pi_1(S_i)\longrightarrow \pi_1(D^4\setminus S)$ 
induced by the inclusion from $S_i$ to $D^4\setminus S$ obtained by pushing $S$ in the normal
direction, with the natural projection $\pi_1(D^4\setminus S)\longrightarrow N_k\pi_1(D^4\setminus S)$, is the trivial homomorphism for each $k$. 

Igusa and Orr \cite{IO} showed 
that a link is $k$-slice if and only if all its Milnor invariants of length $\leq 2k$ vanish, 
a result which is known as the {\em $k$-slice conjecture}.

Observe that for each $i$ and all $k$, the composition
\[\pi_1(S_i)\longrightarrow \pi_1(D^4\setminus S)\longrightarrow N_k\pi_1(D^4\setminus S)\]
is trivial if and only if Milnor loop-invariant $\nu_S(Ii)$ vanish for any
sequence $I$ of length $\leq$ $k-1$.  Hence we have the following result, which relates Milnor invariants in dimensions 3 and 4. 
\begin{prop}\label{prop:429}
 A link has vanishing Milnor invariants of  length $\leq 2k$ if and only if it bounds mutually disjoint, smoothly properly embedded once-punctured surfaces in the $4$--ball having vanishing Milnor loop-invariants of length $\leq k$.
\end{prop}

\section{Chen homomorphisms and Chen-Milnor presentations for cut-diagrams}\label{vie}

We give in this section all technical results involving the Chen homomorphisms introduced in Section \ref{sec:ChenMaps}. 
We prove in particular Theorems \ref{alaChen} and Theorem \ref{alaChenSing}. 

Let $\UU$ be a cut-diagram over $\Sigma=\sqcup_{i=1}^n \Sigma_i$, with a choice of basepoint $p_i$ on each connected component $\Sigma_i$. 
Recall that we denote by $R_{ij}$ all regions in $\Sigma_i$, with $R_i:=R_{i0}$ being the region containing $p_i$. 
Recall also the free groups $F=\langle R_i \rangle$ and $\bar{F}=\langle R_{ij} \rangle$. 
Finally, fix a road network $\alpha$ for $\UU$ based at the point $p_i$. Recall that $\alpha_{ij}\in \alpha$
denotes the road running from $p_i$ to region $R_{ij}$, and that $v_{ij}=\widetilde{w}_{\alpha_{ij}}$ denotes the associated word in $\bar{F}$.

\subsection{Some preliminary results}\label{vie1}

We first derive some elementary properties of the Chen homomorphisms $\eta_q$.  We will make use of the following elementary fact.
\begin{claim}\label{claim2}
Let $G$ be a group, $N$ a normal subgroup of $G$, and $q\ge 1$. 
For any $x,y,a\in G$, if $x\equiv y\mod G_q\!\cdot\!N$, then 
$a^x\equiv a^y\mod G_{q+1}\!\cdot\!N$.
\end{claim}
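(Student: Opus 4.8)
The plan is to verify Claim \ref{claim2} by a direct computation, exploiting the hypothesis to write $y = x c$ for some $c \in G_q \cdot N$, and then expanding $a^y$ in terms of $a^x$ to isolate an error term lying in $G_{q+1} \cdot N$.

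First I would set up notation: assume $x \equiv y \mod G_q \cdot N$, so that $z := x^{-1} y$ lies in $G_q \cdot N$. Since both $G_q$ and $N$ are normal subgroups, $G_q \cdot N$ is a normal subgroup of $G$ (this is part of the convention fixed in the paper). Now compute
\[
a^y = y^{-1} a y = (xz)^{-1} a (xz) = z^{-1} (x^{-1} a x) z = (a^x)^z.
\]
Thus it suffices to show that $(a^x)^z \equiv a^x \mod G_{q+1} \cdot N$, i.e. that $(a^x)^{-1} (a^x)^z \in G_{q+1} \cdot N$. But $(a^x)^{-1} (a^x)^z = [a^x, z]$ by the definition of the commutator used in the paper, $[u,v] = u^{-1} v^{-1} u v$.

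Next I would handle the commutator $[a^x, z]$ with $z \in G_q \cdot N$. Writing $z = g \cdot n$ with $g \in G_q$ and $n \in N$, one expands $[a^x, gn]$ using the standard commutator identity $[u, vw] = [u,w] \cdot [u,v]^w$. This gives $[a^x, gn] = [a^x, n] \cdot [a^x, g]^n$. Here $[a^x, g] \in [G, G_q] = G_{q+1}$, and since $G_{q+1}$ is normal, its conjugate $[a^x, g]^n$ still lies in $G_{q+1}$; meanwhile $[a^x, n] = (a^x)^{-1} n^{-1} (a^x) n = \big((a^x)^{-1} n^{-1} (a^x)\big) n \in N$ because $N$ is normal. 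Hence $[a^x, z] \in G_{q+1} \cdot N$, which is exactly what we needed.

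The only subtle point — the ``main obstacle,'' such as it is — is bookkeeping the order of the factors and making sure every conjugate or product stays inside the relevant normal subgroup; this is routine given that both $G_{q+1}$ and $N$ are normal, so $G_{q+1} \cdot N$ is a genuine normal subgroup and one may freely absorb conjugates. There are no genuine difficulties here; the claim is a formal consequence of the normality of the lower central series terms and of $N$, together with elementary commutator calculus, and the proof is a few lines.
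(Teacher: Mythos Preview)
Your proof is correct and follows essentially the same approach as the paper: both express the discrepancy between $a^x$ and $a^y$ as (a conjugate of) a commutator of $a^{\pm1}$ with an element of $G_q\cdot N$, and then observe this commutator lies in $G_{q+1}\cdot N$. Your version is in fact more explicit than the paper's one-line argument, which writes the identity $a^x=y^{-1}[xy^{-1},a^{-1}]ay$ and concludes directly (writing only $\bmod\ G_{q+1}$, though $G_{q+1}\cdot N$ is what is meant).
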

\begin{proof}
This follows from the equivalence $a^x=y^{-1}[xy^{-1},a^{-1}]ay\equiv a^y\mod G_{q+1}$. 
\end{proof}

\begin{lemme}\label{lem1}
 For all $q\ge 1$, and all $w\in\bar{F}$, $\eta_{q}(w)\equiv \eta_{q+1}(w)$ mod $F_q$. 
 \end{lemme}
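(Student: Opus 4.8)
The plan is to proceed by induction on $q$. The base case $q=1$ asks that $\eta_1(w) \equiv \eta_2(w) \bmod F_1$, which is vacuous since $F_1 = F$ and both sides lie in $F$. For the inductive step, assume the statement holds at level $q$, i.e.\ $\eta_q(w) \equiv \eta_{q+1}(w) \bmod F_q$ for all $w \in \bar F$; I want to deduce $\eta_{q+1}(w) \equiv \eta_{q+2}(w) \bmod F_{q+1}$. Since both $\eta_{q+1}$ and $\eta_{q+2}$ are homomorphisms and $F_{q+1}$ is normal in $F$, the set of $w$ for which the congruence holds is a subgroup, so it suffices to check it on the generators $R_{ij}$ of $\bar F$.

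For a generator $R_i$ (the basepoint region of $\Sigma_i$) both sides equal $R_i$ by definition, so the congruence is trivial. For a generator $R_{ij}$ with $j \neq 0$, I use the defining formulas $\eta_{q+1}(R_{ij}) = R_i^{\eta_q(v_{ij})}$ and $\eta_{q+2}(R_{ij}) = R_i^{\eta_{q+1}(v_{ij})}$. By the induction hypothesis applied to the word $v_{ij} \in \bar F$, we have $\eta_q(v_{ij}) \equiv \eta_{q+1}(v_{ij}) \bmod F_q$ — and here I take $N$ to be the trivial subgroup, so $F_q \cdot N = F_q$. Then Claim \ref{claim2}, applied with $G = F$, $N = \{1\}$, $x = \eta_q(v_{ij})$, $y = \eta_{q+1}(v_{ij})$, and $a = R_i$, yields $R_i^{\eta_q(v_{ij})} \equiv R_i^{\eta_{q+1}(v_{ij})} \bmod F_{q+1}$, which is exactly $\eta_{q+1}(R_{ij}) \equiv \eta_{q+2}(R_{ij}) \bmod F_{q+1}$. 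This completes the inductive step.

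I do not expect any serious obstacle here: the only things to be careful about are the bookkeeping of indices in the shift $q \mapsto q+1$ (making sure the induction hypothesis is invoked at the right level and on the right word, namely $v_{ij}$ rather than $R_{ij}$ itself) and the observation that reducing to generators is legitimate because congruence modulo a normal subgroup is preserved under products and inverses. Claim \ref{claim2} is doing all the real work — it converts a mod-$F_q$ congruence between conjugating elements into a mod-$F_{q+1}$ congruence between the conjugates — and everything else is a routine induction.
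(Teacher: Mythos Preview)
Your proof is correct and follows essentially the same approach as the paper: induction on $q$ with the base case trivial since $F_1=F$, reduction to generators $R_{ij}$, and then applying the induction hypothesis to $v_{ij}$ together with Claim~\ref{claim2} to pass from a congruence mod $F_q$ on conjugating elements to a congruence mod $F_{q+1}$ on the conjugates. Your write-up is just a bit more explicit about why the reduction to generators is legitimate and how Claim~\ref{claim2} is invoked.
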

\begin{proof}
The claim is obvious for $q=1$, since $F_1=F$. 
Assume inductively that the claim holds for some $q\ge 1$. Then it suffices to consider the case $w=R_{ij}$ for any $i,j$.  By induction
hypothesis, $\eta_{q+1}(v_{ij})\equiv \eta_q(v_{ij})\mod F_q$ so that, by Claim \ref{claim2} and the definition of $\eta_{q+1}$, $\eta_{q+1}(R_{ij}) =
R_i^{\eta_q(v_{ij})}\equiv
R_i^{\eta_{q+1}(v_{ij})}=\eta_{q+2}(R_{ij})\mod F_{q+1}.$
\end{proof}

Recall from Notation \ref{nota:W} that $W$ is the normal subgroup of $\bar{F}$ generated by all Wirtinger relations in $G(\UU)$. 
\begin{lemme}\label{lem2}
For any
generator $R_{ij}$ of $\bar{F}$, $\eta_q(R_{ij})\equiv R_{ij}$ mod $\bar{F}_q\!\cdot \!W$. 
\end{lemme}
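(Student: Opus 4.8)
\textbf{Proof plan for Lemma \ref{lem2}.}

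The plan is to induct on $q$. For $q=1$ the statement is immediate: by definition $\eta_1(R_{ij}) = R_i$, and the Wirtinger relation $R_{ij} = R_i^{v_{ij}}$ holds in $G(\UU) = \fract{\bar F}{W}$, so in particular $R_i \equiv R_{ij} \mod W = \bar F_1 \cdot W$ (using $\bar F_1 = \bar F$, but actually we only need $W$ here). So assume the statement holds for some $q \geq 1$, i.e. $\eta_q(R_{ij}) \equiv R_{ij} \mod \bar F_q \cdot W$ for every generator $R_{ij}$; since $\eta_q$ is a homomorphism and $\bar F_q \cdot W$ is normal, this upgrades to $\eta_q(w) \equiv w \mod \bar F_q \cdot W$ for every $w \in \bar F$.

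Now fix a generator $R_{ij}$ and compute $\eta_{q+1}(R_{ij}) = R_i^{\eta_q(v_{ij})}$. First apply the induction hypothesis to $w = v_{ij} \in \bar F$: $\eta_q(v_{ij}) \equiv v_{ij} \mod \bar F_q \cdot W$. Then invoke Claim \ref{claim2} with $G = \bar F$, $N = W$, $x = \eta_q(v_{ij})$, $y = v_{ij}$, $a = R_i$: this gives $R_i^{\eta_q(v_{ij})} \equiv R_i^{v_{ij}} \mod \bar F_{q+1} \cdot W$. Finally, the Wirtinger relation $R_{ij} = R_i^{v_{ij}}$ holds already in $G(\UU) = \fract{\bar F}{W}$, hence $R_i^{v_{ij}} \equiv R_{ij} \mod W$, and a fortiori mod $\bar F_{q+1} \cdot W$. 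Chaining these congruences gives $\eta_{q+1}(R_{ij}) \equiv R_{ij} \mod \bar F_{q+1} \cdot W$, completing the induction.

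The argument is essentially bookkeeping, so there is no serious obstacle; the one point requiring a little care is the promotion of the induction hypothesis from generators to arbitrary words $w \in \bar F$. This works because $\eta_q$ is a group homomorphism and $\bar F_q \cdot W$ is a normal subgroup: if $\eta_q(R_{ij}) \equiv R_{ij}$ modulo a normal subgroup $H$ for each generator, then for a word $w = R_{i_1 j_1}^{\pm 1} \cdots R_{i_k j_k}^{\pm 1}$ we get $\eta_q(w) = \eta_q(R_{i_1 j_1})^{\pm 1} \cdots \eta_q(R_{i_k j_k})^{\pm 1} \equiv R_{i_1 j_1}^{\pm 1} \cdots R_{i_k j_k}^{\pm 1} = w \mod H$, using that the quotient $\fract{\bar F}{H}$ is a group in which equal images multiply to equal images. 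Everything else is a direct application of Claim \ref{claim2} and the defining Wirtinger relations, so the proof should be short.
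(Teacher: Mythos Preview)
Your approach is correct and essentially identical to the paper's: induction on $q$, with the inductive step using Claim~\ref{claim2} applied to $\eta_q(v_{ij})\equiv v_{ij}$ and then the Wirtinger relation $R_i^{v_{ij}}\equiv R_{ij}\bmod W$. One small slip: in your $q=1$ case, the parenthetical ``but actually we only need $W$ here'' is not right, since $R_i$ and $R_{ij}$ are only \emph{conjugate} modulo $W$, not equal; the triviality really does come from $\bar F_1=\bar F$, as you correctly note first.
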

\begin{proof}
We proceed by induction on $q$. The case $q=1$ is trivial. 
Assume now  that for some $q\ge 1$ we have $\eta_q(R_{ij})\equiv R_{ij}$ mod $\bar{F}_q\!\cdot \!W$ for any $i,j$. 
This implies that $\eta_{q}( v_{ij}^{\pm 1})\equiv v_{ij}^{\pm 1}$ mod $\bar{F}_q\!\cdot \!W$. 
Hence by Claim \ref{claim2} and the definition of $\eta_{q+1}$ we have 
that $\eta_{q+1}(R_{ij}) =R_i^{\eta_q(v_{ij})}\equiv R_i^{v_{ij}}\equiv R_{ij}$ mod $\bar{F}_{q+1}\!\cdot \!W$. 
\end{proof}

\subsection{Nilpotent quotients}\label{vie2}

We first give here a presentation for the nilpotent quotients of $G(\UU)$   
with as many generators as components in $\Sigma$, and only nilpotent and meridian/loop-longitude commutation relations. 
The Chen--Milnor presentation of Theorem \ref{alaChen}, proved in the next subsection, will significantly reduce the number of relations. 

To do so, we  define
  \emph{walls} of
  $\UU$, which are connected components of regular points of
  $\UU$. They actually correspond to regular \quote{walls} between adjacent
  regions. Note that each wall is included in a cut-arc of $\UU$, hence inherits a labeling by some region.
  
Consider some wall $a$ of $\UU$, say on the $i$-th component and labeled by some region $R$.
Recall from Section \ref{sec:PeripheralSystems} that the Wirtinger relation 
$R_{ik}=R_{ij}^R$ holds in $G(\UU)$, 
where $(R_{ij},R_{ik})$ is the pair of $R$--adjacent regions at $a$. 
We define the word $w_a\in \bar{F}$ by 
\begin{equation}
w_a:=v_{ij}Rv^{-1}_{ik},
\label{eq:w_a}
\end{equation}
which is the (not necessarily normalized) word 
associated to the $p_i$--based loop defined
by connecting the road $\alpha_{ij}$ to $\alpha_{ik}^{-1}$ by a segment crossing $a$ transversally.

\begin{nota}\label{nota:C}
For all $q\ge 1$, we denote by $C_{(q)}$ the normal closure of all relations $[R_i,\eta_{q}(w_a)]$ in $F$, for all 
walls $a$ in $\UU$, with $w_a$ defined in Equation (\ref{eq:w_a}) above.   
\end{nota}

\begin{lemme}\label{lem3}
 For all $q\ge 1$, we have an isomorphism 
 \[N_qG(\UU)\cong  \fract{F}{F_q\!\cdot\! C_{(q)}}.\] 
 In other words, $N_qG(\UU)$ has the following presentation 
\[
\left\langle R_1,\ldots,R_n\ \ \Bigg|
  \begin{array}{l}
    F_q\,;\, \big[R_i,\eta_q(w_a)\big]
\textrm{ for all $i$ and all walls $a$ on $\UU\cap \Sigma_i$} 
  \end{array}
\right\rangle.
\]
 \end{lemme}
\begin{proof}
The homomorphism $\eta_q$ naturally induces a map $N_q\bar{F}\rightarrow \fract{F}{F_q\!\cdot\! C_{(q)}}$.  
Consider, with the above notation, the Wirtinger relator $R_{ik}^{-1}R_{ij}^R$ at some wall $a$ on $\UU\cap \Sigma_i$. 
We have
\begin{align*}
\eta_q\left(R_{ik}^{-1}R_{ij}^R\right) 
& =
  \left(R_i^{\eta_{q-1}(v_{ik})}\right)^{-1}\eta_q(R)^{-1}R_i^{\eta_{q-1}(v_{ij})}\eta_q(R)
  \\
& \equiv
  \left(R_i^{\eta_q(v_{ik})}\right)^{-1}\eta_q(R)^{-1}R_i^{\eta_q(v_{ij})}\eta_q(R)\
  =\ \big[ R_i,\eta_q(w_a)\big]^{\eta_q(v_{ik})}\mod
  F_q
\end{align*}
where the equivalence combines Lemma \ref{lem1} and Claim \ref{claim2}. 
This shows that $\eta_q$ induces a well-defined epimorphism
from $N_qG(\UU)=N_q\left(\fract{\bar{F}}{W}\right)\simeq \fract{\bar{F}}{\bar{F}_q\cdot W}$ to $\fract{F}{F_q\!\cdot\! C_{(q)}}$.
 It is easily checked, using Lemma \ref{lem2} and the above equivalence, that the natural
inclusion map $F\rightarrow \bar{F}$ yields an inverse for this epimorphism, which is thus an isomorphism. 
\end{proof}

We have the following seemingly weaker form of Lemma \ref{lem3},
  with an infinite number of commutator relations. This version shall however be useful. 
\begin{prop}\label{cor:InfiniteRelations}
For all $q\geq1$, the group $N_qG(\UU)$ has the following presentation
\[
\left\langle R_1,\ldots,R_n\ \ \Bigg|
  \begin{array}{l}
    F_q\,;\,\big[R_i,\eta_q(w_\gamma)\big]
\textrm{ for all $i$ and all $p_i$--based generic loop $\gamma$
    on $\Sigma_i$}
  \end{array}
\right\rangle.
\]
\end{prop}

\begin{remarque}\label{rem:normornotnorm}
We may equally well use relations $\big[R_i,\eta_q(\widetilde{w}_\gamma)\big]$, with the 
words $\widetilde{w}_\gamma$ in the above presentation.  Indeed we have, for some $s\in \mathbb{Z}$: 
\[
 \big[R_i,\eta_q(w_\gamma)\big]=\big[R_i,\eta_q(R_i^s\widetilde{w}_\gamma)\big]=\big[R_i,R_i^s\eta_q(\widetilde{w}_\gamma)\big]=\big[R_i,\eta_q(\widetilde{w}_\gamma)\big].
\]
\end{remarque}

\begin{proof}
Using Remark \ref{rem:normornotnorm}, we have that relations in Lemma \ref{lem3} are special cases of the
  relations in Proposition \ref{cor:InfiniteRelations}. It is hence sufficient to show that, for any $p_i$--based loop $\gamma$
    on $\Sigma_i$, the relation $\big[R_i,\eta_q(w_\gamma)\big]$ can be deduced from the relations in 
    $$\Big\{ \big[R_i,\eta_q(w_a)\big]\ \big|\  \textrm{$a$ wall of $\UU$}\Big\}. $$ 
  The proof is essentially given in the figure below, as follows. 
The word $w_{\gamma}$ is of the form 
\[w_\gamma:=R_{i}^{s}R_{i_1j_1}^{\varepsilon_1}R_{i_2j_2}^{\varepsilon_{2}}\cdots R_{i_mj_m}^{\varepsilon_{m}}, \]
for some $m,s\ge 0$ and $\varepsilon_k\in \{\pm 1\}$, where the $R_{i_kj_k}\in \bar{F}$ are 
the labels of the successive walls $a_k$ met transversally when
running along $\gamma$; see below on the left. 
\[
\dessin{3cm}{PushPouss_11} \ \ \leadsto\ \ \dessin{3cm}{PushPouss_22}
\]
Then, as illustrated in the figure, a homotopy of $\gamma$, guided by
the road network $\alpha$ and involving only moves $\textnormal H_1$
of Lemma \ref{lem:GenMoves},
gives directly the decomposition
\[
w_\gamma=R_{i}^{s}w_{a_1}^{\varepsilon_1}w_{a_2}^{\varepsilon_2} \cdots
w_{a_r}^{\varepsilon_r},
\]
where the words $w_{a_k}$ are defined in Equation (\ref{eq:w_a}).
The commutation of
$R_i$ with $\eta_q(w_\gamma)$ then follows easily from the commutation of $R_i$ with
each $\eta_q(w_{a_k})$.
\end{proof}

\subsection{Chen--Milnor presentation for nilpotent quotients: proof of Theorem \ref{alaChen}}\label{vie3}

Recall that we pick, for each component $\Sigma_i$ of $\Sigma$, a system of loop-longitudes $\mathcal{L}_i(\Sigma):=\{w_{ij}\}_j$ associated with a collection of $p_i$-based loops $\{\gamma_{ij}\}_j$ comprising a generating set for $\pi_1(\Sigma_i;p_i)$.  

The proof of Theorem \ref{alaChen} builds on the presentation given in Proposition \ref{cor:InfiniteRelations}. The idea is to show that the infinite family of commutation relations  in Proposition \ref{cor:InfiniteRelations}, involving \emph{all} generic loops on $\Sigma_i$, can be realized by just the commutation relations involving the finite collection of loops in $\{\gamma_{ij}\}_i$. 
In fact, we shall prove more precisely that $F_q\cdot L_{(q)}=F_q\cdot V_{(q)}$, 
where
\begin{itemize}
\item $L_{(q)}$ is the normal closure of
  $\left\{[R_i,\eta_q(w_\gamma)]\ | \ \textrm{$\gamma$ any
      $p_i$--based generic loop on $\Sigma_i$}\right\}$ in $F$, 
\item $V_{(q)}$  is  the normal closure of
  $\left\{[R_i,\eta_{q}(w_{ij})]\ | \ w_{ij}\in \mathcal{L}_i(\Sigma) \right\}$ in $F$. 
\end{itemize}

The inclusion $F_q\!\cdot\!V_{(q)}\subset F_q\!\cdot\!L_{(q)}$ is clear. 
To prove the converse inclusion, 
it suffices to show that the map 
$\fract{F}{ F_q\!\cdot\!V_{(q)}}
\longrightarrow \fract{F}{ F_q\!\cdot\!L_{(q)}}$, 
induced by the identity, is injective. 
This is a consequence of Proposition \ref{claimk} below.
Indeed, since any loop on $\Sigma_i$ is homotopic to a product of elements in $\{\gamma^{\pm 1}_{ij}\}_j$,
this proposition in particular implies that $[R_i,\eta_{q}(w_\gamma)]\equiv 1$ mod $F_{q}\!\cdot\!V_{(q)}$ 
for any $p_i$--based loop $\gamma$ on $\Sigma_i$, and Theorem \ref{alaChen} follows. 

\begin{prop}\label{claimk}
For any two homotopic paths  $\gamma,\gamma'$ on $\Sigma$, we have  
\[\eta_{q}(w_\gamma)\equiv \eta_{q}(w_{\gamma'})\textrm{ mod $F_{q}\!\cdot\!V_{(q)}$}.\]
\end{prop}
 We stress that the paths $\gamma,\gamma'$ in the statement may have arbitrary endpoints, and are in particular not required to start at the region $R_i$.
\begin{proof}
Note that the case $q=1$ of the claim is trivial, 
and assume inductively that Proposition \ref{claimk} holds for some $q\ge 1$. 
We first observe the inclusion 
\begin{equation}\label{tiroir}
F_{q+1}\!\cdot\!V_{(q)}\subset F_{q+1}\!\cdot\!V_{(q+1)}\textrm{, for all $q\ge 1$.}
\end{equation}
This is true, since by Lemma \ref{lem1} and Claim~\ref{claim2}, we have  $[a_i,\eta_{q}(w_{ij})]\equiv [a_i,\eta_{q+1}(w_{ij})]$ mod $F_{q+1}$ for any $i,j$.

Now, two homotopic paths  $\gamma,\gamma'$ differ by a sequence of 
 the three local moves $\textnormal H_1$, $\textnormal H_2$ and $\textnormal H_3$ of  Lemma \ref{lem:GenMoves}. 
So it suffices to check the claim when $\gamma$ and $\gamma'$ differ by one of these three moves.
In the rest of this proof, for a given region $R$, we shall denote by $v_R$ the 
word associated with the road from the road network $\alpha$ running to $R$.
\begin{itemize}
\item If $\gamma$ and $\gamma'$ differ by move $\textnormal H_1$, then
  $w_{\gamma}=w_{\gamma'}$ in $\bar{F}$ so the desired equivalence holds trivially. 
\item
Suppose that $\gamma$ and $\gamma'$ differ by move
$\textnormal H_2$, with $\gamma$ corresponding to the left-hand side
of the move. We then have  
\[w_{\gamma}= B^{s}\widetilde{w}_{\gamma_1}\widetilde{w}_{\gamma_2}
\quad\textrm{and}\quad 
w_{\gamma'}=B^{s-\varepsilon}\widetilde{w}_{\gamma_1}A^{\varepsilon}\widetilde{w}_{\gamma_2},\] 
where $\gamma_1,\gamma_2$ are paths such that
  $\gamma=\gamma_1.\gamma_2$, $s\in \Z$, $\varepsilon={\pm 1}$, and
  $B$ is the region of $\Sigma_i$ where $\gamma_1$ starts:
\[
\dessin{2cm}{Cas2}.
\]
By definition of the Chen homomorphism, we have
\[
\eta_{q+1}(w_{\gamma'})
= \eta_{q}(v_B^{-1})R_{i}^{s-\varepsilon}\left( R_i^{\varepsilon}\right)^{\eta_{q}(v_A) \eta_{q+1}(\widetilde{w}_{\gamma_1}^{-1})
\eta_{q}(v_B^{-1})}\eta_{q}(v_B)\eta_{q+1}(\widetilde{w}_{\gamma_1}\widetilde{w}_{\gamma_2}). \]
Observe that 
$v_A \widetilde{w}_{\gamma_1}^{-1} v_B^{-1}$ represents a $p_i$--based loop on $\Sigma_i$, 
which is homotopic to a product $\xi$ of elements in $\{\gamma_{ij}^{\pm1}\}$.
Using Lemma \ref{lem1} and the induction hypothesis, we thus have 
\[
\eta_{q}(v_A)
\eta_{q+1}(\widetilde{w}_{\gamma_1})^{-1}\eta_{q}(v_B^{-1}) \equiv
\eta_{q}(v_A \widetilde{w}_{\gamma_1}^{-1} v_B^{-1}) \equiv
\eta_{q}(\widetilde{w}_\xi)\mod F_{q}\!\cdot\!V_{(q)}.
\]
This implies, by Claim~\ref{claim2}, that 
\[ \quad\quad\quad \eta_{q+1}(w_{\gamma'})\equiv  
\eta_{q}(v_B^{-1})R_{i}^{s-\varepsilon}\left( R_i^{\varepsilon}\right)^{\eta_{q}(\widetilde{w}_{\xi})}\eta_{q}(v_B)\eta_{q+1}(\widetilde{w}_{\gamma_1}\widetilde{w}_{\gamma_2})
\equiv 
\eta_{q+1}(w_{\gamma}) \mod F_{q+1}V_{(q)}, \]
where the last equivalence simply uses the fact that $[R_i,\eta_q(\widetilde{w}_\xi)]=[R_i,\eta_q(w_\xi)]\equiv 1\mod V_{(q)}$.
The conclusion then follows from (\ref{tiroir}). 
 \item Suppose now that $\gamma$ and $\gamma'$ differ by move $\textnormal H_3$,
   with $\gamma$ corresponding to the left-hand side of the move. 
Then, up to some moves $\textnormal H_1$, we may assume that the
situation is as shown below, so that there exist two arcs $\gamma_1,\gamma_2$ on $\Sigma_i$ such that
\[
w_\gamma = D^s\widetilde{w}_{\gamma_1}\widetilde{w}_{\gamma_2}
 \quad\textrm{and}\quad  
 w_{\gamma'} = D^s\widetilde{w}_{\gamma_1}(A^{\varepsilon})^{C}B^{-\e}\widetilde{w}_{\gamma_2},
\]
for some $s\in \Z$ and some $\varepsilon\in\{\pm 1\}$,
where $D$ is the starting region of $\gamma_1$ and 
$(A,B)$ is the pair of $C$--adjacent cut-arcs involved in the move, 
which lie on the $j$-th component of $\Sigma$ for some $j$. 
\[
\dessin{2cm}{Cas3}
\]
Using the definition of Chen homomorphisms, we have
\[ \eta_{q+1}(w_{\gamma'}) = 
\eta_{q+1}(D^s\widetilde{w}_{\gamma_1})[X,R_j^{-\e}]^{\eta_q(v_B)}\eta_{q+1}(\widetilde{w}_{\gamma_2}), 
 \]
where we set $X:=\eta_q(v_A)\eta_{q+1}(C)\eta_q(v_B^{-1})$. 
By Lemma \ref{lem1}, we have $X\equiv \eta_q(v_ACv_B^{-1}) \mod F_q$.
Now, the word $v_ACv_B^{-1}$ is associated to a $p_j$--based loop on $\Sigma_j$, which is homotopic to a product $\xi$ of elements in $\{\gamma_{jk}^{\pm1}\}$. So the induction hypothesis says  that
$ X\equiv \eta_{q}(\widetilde{w}_\xi) \mod F_{q}\!\cdot\!V_{(q)}$.
Claim~\ref{claim2} then gives 
\[
[X,R_j^{-\e}]=(R_j^\e)^XR_j^{-\e}\equiv
(R_j^\e)^{\eta_{q}(\widetilde{w}_\xi)}R_j^{-\e}\equiv 1\mod
F_{q+1}\cdot V_{(q)}. 
\]
By (\ref{tiroir}), this implies that 
\[ 
 \eta_{q+1}(w_{\gamma'}) \equiv  
\eta_{q+1}(D^s\widetilde{w}_{\gamma_1}) \eta_{q+1}(\widetilde{w}_{\gamma_2})
\equiv  \eta_{q+1}(w_{\gamma})\mod  F_{q+1}\cdot V_{(q+1)}.
\]
\end{itemize}
This concludes the proof. 
\end{proof}

\subsection{Chen--Milnor presentation for reduced quotients: proof of Theorem \ref{alaChenSing}}\label{vie4}

The presentation for R$G(\UU)$ given in Theorem \ref{alaChenSing} is obtained in a completely similar way as Theorem \ref{alaChen} in the previous subsection. 
Hence we just sketch the argument below, stressing only the new ingredients. 

We first note that the Chen homomorphisms $\eta_q$ induce well-defined homomorphisms $\textrm{R}\bar{F} \rightarrow \textrm{R}F$. 
The arguments proving Lemma \ref{lem3} and Proposition \ref{cor:InfiniteRelations}
then apply verbatim to show that, for any $q\ge 1$, the $q$-th nilpotent quotient of $\nR G(\UU)$  has the following presentation: 
\begin{equation}
\label{eq:SingPresentation}
N_q \nR G(\UU) \cong \left\langle R_1,\ldots,R_n\ \ \left|
  \begin{array}{l}
    F_q\\[.0cm] 
    [R_i,R_i^g]\textrm{ for all $i$ and all $g\in F$}\\[.0cm]
    [R_i,\eta_q(w_\gamma)]
\textrm{ for all $i$ and all $p_i$--based generic loop $\gamma$ on $\Sigma_i$}
  \end{array}
\right.\right\rangle.
\end{equation}

From this point, we can use an induction on $q$ to rework the
presentation so that the third type of relations are substituted by commutation relations of the form $\big[R_i,\eta_q(w_{ij})\big]$,  as in the proof of Theorem \ref{alaChen}.
The only new ingredient here is that move $\textnormal H_2$ from Lemma \ref{lem:GenMoves} may now involve a $\circ$ endpoint, that is, a terminal cut-arc which is not labeled by the region supporting the move, 
but by any region in the same connected component. 
Suppose hence that two arcs $\gamma$ and $\gamma'$ on $\Sigma_i$ differ by such a move $\textnormal H_2$, with $\gamma$ corresponding to the
left-hand side of the move (in the figure of Lemma \ref{lem:GenMoves}).
By the labeling condition 2') of Definition \ref{def:self-cut}, the cut-arc involved in this move
is labeled by some region $A$ of $\Sigma_i$. We then have  
\[w_{\gamma}= B^{s}\widetilde{w}_{\gamma_1}\widetilde{w}_{\gamma_2}
\quad\textrm{and}\quad 
w_{\gamma'}=B^{s-\varepsilon}\widetilde{w}_{\gamma_1}A^{\varepsilon}\widetilde{w}_{\gamma_2},\] 
where $\gamma_1,\gamma_2$ are paths such that
$\gamma=\gamma_1.\gamma_2$, $s\in \Z$, $\varepsilon={\pm 1}$, and $B$
is the region of $\Sigma_i$ where $\gamma_1$ starts. 
As in the proof of Proposition \ref{claimk}, denote by $v_R$, for any region $R$,
the word associated to the road of $\alpha$ running to 
the region $R$. We have, by the definition of the Chen homomorphisms:  
\begin{equation}\label{ouiche}
\eta_{q+1}(w_{\gamma'})
= \left(R_{i}^{-\varepsilon}\right)^{\eta_{q}(v_B)} \left( R_i^{\varepsilon}\right)^{\eta_{q}(v_A) \eta_{q+1}(\widetilde{w}_{\gamma_1}^{-1}B^{-s})}\eta_{q+1}(\widetilde{w}_{\gamma}). 
\end{equation}
Hence $\eta_{q+1}(w_{\gamma'})$ and $\eta_{q+1}(w_{\gamma})$ differ by a product of conjugates of $R_i$. This implies that 
$[R_i,\eta_{q+1}(w_{\gamma'})]$ and $[R_i,\eta_{q+1}(w_{\gamma})]$ are equivalent modulo the normal closure of the relations $[R_i,R_i^g]$ for all $i$ and all $g\in F$.

Now, as already noted in Section \ref{sec:loug}, Habegger--Lin's result \cite[Lem. 1.3]{HL} implies that $\nR
G(\UU)$ is a nilpotent group of order at most $n$. 
Picking any $q\geq n$, it follows that
  (\ref{eq:SingPresentation}) provides a presentation for
  $\nR G(\UU)\cong N_q \nR G(\UU)$, and also that the relations involving
  $q$-th iterated commutators are trivially satisfied. We thus obtain the desired presentation and the proof is complete. 

\subsection{Chen homomorphisms and basepoint change}\label{vie5}

We now consider the situation where 
we pick a new set of basepoints $\{p'_i\}$ for $\Sigma$. This induces a new set of meridians $\{R'_i\}$, 
which provides a new presentation for $N_qG(\UU)$ by Theorem \ref{alaChen}. 
The purpose of this subsection is to understand explicitly how a given word in the alphabet $\big\{R_i^{\pm 1}\big\}$, representing some element of $N_qG(\UU)$, translates into a word in this new alphabet  $\big\{{R'_i}^{\pm 1}\big\}$. 
This is done in Corollary \ref{rem:baze}, where the Chen homomorphisms provide the desired dictionary between the two presentations of $N_qG(\UU)$.
We will use the following.
\begin{nota}
Given an element $x$ in the free group generated by a set $\{e_1,\ldots,e_n\}$, we denote 
by $x_{\scalebox{0.85}{$[e_i\mapsto f_i]$}}$ the element in a given  group $G$ obtained by substituting each $e_i$ by $f_i\in G$. 
\end{nota}
Now, let $\alpha$ be a road network for $\UU$ based at $\{p_i\}$, and 
let $\alpha'$ be another choice of road network, based at  $\{p'_i\}$. 
Consider in $\alpha'$ the road $a_i$ running from $p'_i$ to the basepoint $p_i$ of $\alpha$, and set $v_i:=\widetilde{w}_{a_i}$, the associated (unnormalized) word. 
The maps $\eta^\alpha_q$ take values in the free group $F=\langle R_i \rangle$, 
while the maps $\eta^{\alpha'}_q$ take values in the free group $F'=\langle R'_i \rangle$, 
and both can be seen as taking values in the free group $\bar{F}=\langle R_{ij} \rangle$ generated by \emph{all} regions. 

Recall that $W$ is the normal subgroup of $\bar{F}$ generated by all
Wirtinger relations, see Notation \ref{nota:W}. 
Then for any $w\in \bar{F}$, one can see $\eta^\alpha_q(w)$ as an element of $\fract{F}{F_q\!\cdot\!\eta^\alpha_q(W)}$, while 
 $\eta^{\alpha'}_q(w)$ is seen as an element of $\fract{F'}{F'_q\!\cdot\!\eta^{\alpha'}_q(W)}$. 
\begin{lemme}\label{lem:baze}
In the above notation, for any $w\in\langle R_{ij} \rangle$,  we have 
\[ \eta^{\alpha'}_q(w) = \eta^{\alpha}_q(w)_{\scalebox{0.85}{$\big[ R_i\mapsto {{R'_i}}^{\,\eta^{\alpha'}_{q}(v_i)}\big]$}}\textrm{ in $\fract{F'}{F'_q\!\cdot\!\eta^{\alpha'}_q(W)}$.}\]
\end{lemme}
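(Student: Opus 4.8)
The plan is to prove Lemma \ref{lem:baze} by induction on $q$, following the recursive structure of the Chen homomorphisms. The base case $q=1$ is immediate: by definition $\eta_1^\alpha(R_{ij})=R_i$ and $\eta_1^{\alpha'}(R_{ij})=R'_i$, and since $F'_1=F'$ the target group $\fract{F'}{F'_1\cdot\eta^{\alpha'}_1(W)}$ is trivial, so both sides of the claimed equality are trivially equal (alternatively one checks the substitution identity directly on generators before quotienting). For the inductive step, assume the statement holds for $q$; it suffices to verify it on a generator $R_{ij}$ of $\langle R_{ij}\rangle$, since both sides are group homomorphisms in $w$ and the substitution $[R_i\mapsto {R'_i}^{\eta^{\alpha'}_q(v_i)}]$ is itself a homomorphism. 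By Definition \ref{def:Chen} we have $\eta^{\alpha'}_{q+1}(R_{ij})={R'_i}^{\eta^{\alpha'}_q(v'_{ij})}$, where $v'_{ij}=\widetilde{w}_{\alpha'_{ij}}$ is the word of the road in $\alpha'$ to $R_{ij}$, and $\eta^{\alpha}_{q+1}(R_{ij})=R_i^{\eta^\alpha_q(v_{ij})}$.

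The heart of the argument is to relate the two road words $v_{ij}$ and $v'_{ij}$. The loop obtained by running $\alpha'_{ij}$ to region $R_{ij}$, then $\alpha_{ij}^{-1}$ back to $p_i$, then the road $a_i$ backwards from $p_i$ to $p'_i$ — call its word $\xi$ — is a $p'_i$-based generic loop on $\Sigma_i$. Concretely, up to moves $\textnormal H_1$ (which do not change the associated word, hence are harmless here), one has $v'_{ij}=v_i\, v_{ij}\, \widetilde w_{\xi^{-1}}$ or a similar decomposition tying $v'_{ij}$, $v_i$ and $v_{ij}$ together via a loop word. I would then apply Corollary \ref{corclaim} and Lemma \ref{lem2} to control $\eta^{\alpha'}_q$ of these words modulo $\bar F_q\cdot W$, together with Claim \ref{claim2} to push conjugations one nilpotent level up; this is exactly the mechanism that lets the $q$-level identity for $v_{ij}$ produce the $(q{+}1)$-level identity for $R_{ij}$. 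Substituting the induction hypothesis $\eta^{\alpha'}_q(v_{ij})=\eta^{\alpha}_q(v_{ij})_{[R_i\mapsto {R'_i}^{\eta^{\alpha'}_q(v_i)}]}$ into $\eta^{\alpha'}_{q+1}(R_{ij})={R'_i}^{\eta^{\alpha'}_q(v'_{ij})}$ and simplifying the nested conjugations should yield ${R'_i}^{\eta^{\alpha'}_{q+1}(v_i)\cdot(\text{something})}$ which matches $\big(R_i^{\eta^\alpha_{q+1}(v_{ij})}\big)_{[R_i\mapsto {R'_i}^{\eta^{\alpha'}_{q+1}(v_i)}]}$ modulo $F'_{q+1}\cdot\eta^{\alpha'}_{q+1}(W)$. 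Here one uses Lemma \ref{lem1} to pass freely between $\eta^{\alpha'}_q(v_i)$ and $\eta^{\alpha'}_{q+1}(v_i)$ in the exponent, at the cost of an error in $F'_{q+1}$, which is absorbed by the quotient.

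\textbf{Main obstacle.} The delicate point is the precise bookkeeping of \emph{which} road network is used where — $\eta^\alpha_q$ lives over $F$, $\eta^{\alpha'}_q$ over $F'$, and both are viewed inside $\bar F$ — and making sure the substitution operator $x\mapsto x_{[R_i\mapsto {R'_i}^{\eta^{\alpha'}_q(v_i)}]}$ interacts correctly with the quotient by $\eta^{\alpha'}_q(W)$ rather than $\eta^{\alpha}_q(W)$; one needs that $\eta^\alpha_q(W)$ is carried into $\eta^{\alpha'}_q(W)$ (modulo $F'_q$) under this substitution, which is itself a consequence of Corollary \ref{corclaim} applied to Wirtinger relators. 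Getting the conjugating factors in the exponents to cancel — tracking the $v_i$'s that appear from converting $p_i$-based data to $p'_i$-based data — is the only genuinely fiddly computation, and everything else is a routine induction powered by Claim \ref{claim2} and Lemmas \ref{lem1}, \ref{lem2}.
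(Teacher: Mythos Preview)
Your inductive approach can be made to work, but it is genuinely different from — and considerably more laborious than — the paper's argument, which is \emph{not} by induction on $q$.

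The paper's proof is a direct argument hinging on the idempotence $\eta^{\alpha'}_q\circ\eta^{\alpha'}_q=\eta^{\alpha'}_q$ (Remark~\ref{rem:comp}). The outline is: by Lemma~\ref{lem2} one has $\eta^{\alpha'}_q\big(\eta^{\alpha}_q(R_{ij})\big)\equiv\eta^{\alpha'}_q(R_{ij})$ modulo $\bar F_q\!\cdot\!W$, so the two substitutions $w_{[R_{ij}\mapsto\eta^{\alpha'}_q(R_{ij})]}$ and $w_{[R_{ij}\mapsto\eta^{\alpha'}_q(\eta^{\alpha}_q(R_{ij}))]}$ differ by some $g\in\bar F_q\!\cdot\!W$. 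Now apply $\eta^{\alpha'}_q$ once more: idempotence fixes both substituted expressions (their letters already lie in $F'$), while $g$ is sent into $F'_q\!\cdot\!\eta^{\alpha'}_q(W)$. This instantly gives $\eta^{\alpha'}_q(w)\equiv\eta^{\alpha}_q(w)_{[R_i\mapsto\eta^{\alpha'}_q(R_i)]}$ in the target quotient, and a final application of Lemma~\ref{lem1} with Claim~\ref{claim2} replaces $\eta^{\alpha'}_q(R_i)={R'_i}^{\eta^{\alpha'}_{q-1}(v_i)}$ by ${R'_i}^{\eta^{\alpha'}_q(v_i)}$. No loop words, no comparison of road networks path-by-path, no induction.

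Your route via the decomposition $v'_{ij}=\widetilde w_\xi\, v_i\, v_{ij}$ and the induction hypothesis on $v_{ij}$ does close, but the ``fiddly computation'' you flag is real: you must verify that $\eta^{\alpha'}_q(W)\subseteq F'_q\!\cdot\!\eta^{\alpha'}_{q+1}(W)$ (this follows from Lemma~\ref{lem1} applied to each Wirtinger relator) in order to promote the level-$q$ induction hypothesis to a congruence usable at level $q{+}1$, and you must show $[R'_i,\eta^{\alpha'}_q(\widetilde w_\xi)]\in F'_{q+1}\!\cdot\!\eta^{\alpha'}_{q+1}(W)$ (this comes from applying $\eta^{\alpha'}_{q+1}$ to $[R'_i,\widetilde w_\xi]\in W$ and then Lemma~\ref{lem1} with Claim~\ref{claim2}). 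You have named these obstacles but not dispatched them. Also, your parenthetical that for $q=1$ ``one checks the substitution identity directly on generators before quotienting'' is not correct: the substitution sends $R_i$ to a nontrivial conjugate of $R'_i$, not to $R'_i$; only the triviality of $F'/F'_1$ saves the base case. In short, your plan is sound but the paper's idempotence trick bypasses all of this bookkeeping in one stroke.
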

 This lemma indeed provides the desired dictionary between our two presentations, as follows.  
\begin{cor}\label{rem:baze}
Let $\{R_i\}$ and $\{R'_i\}$ be two sets of meridians for some cut-diagram $\UU$ over $\Sigma$. 
 Let $w$ be a word of $R_i^{\pm1}$ representing some element $\omega$ in $N_qG(\UU)$. 
 Then $\omega$ is represented by the word of ${{R'_i}}^{\pm1}$ obtained from $w$ by replacing each $R_i$ by ${{R'_i}}^{\,\eta^{\alpha'}_{q}(v_i)}$. 
\end{cor}
\begin{proof}
Let us regard $w$ as a word of $\big\{R_{ij}^{\pm 1}\big\}$ involving only the letters $R_i^{\pm1}$---any element of $F$ can be seen as sitting in $\langle R_{ij} \rangle$ in this way. Then we have by definition of the Chen homomorphisms that $w=\eta^\alpha_q(w)$. 
But we also have that $\eta^{\alpha'}_q(w)$ is a representative word of $\big\{{R'_i}^{\pm 1}\big\}$. 
Lemma \ref{lem:baze} tells us that this representative is obtained by replacing each $R_i$ by ${{R'_i}}^{\,\eta^{\alpha'}_{q}(v_i)}$ in the word $w$. 
\end{proof}

\begin{proof}[Proof of Lemma \ref{lem:baze}]
By definition of the homomorphism $\eta^\alpha_q$, 
and regarding $\eta^{\alpha'}_q(w)$ as an element of $\bar{F}$, we have
\[
\eta^{\alpha'}_q(w)\equiv w_{\scalebox{0.85}{$\left[R_{ij}\mapsto
\eta^{\alpha'}_q(R_{ij}) \right]$}}\mod\bar{F}_q\!\cdot\!W.
\]
Now, a direct consequence of Lemma \ref{lem2}
 is that, for any $i,j$, we have
 \[
\eta^{\alpha'}_q\left( \eta^{\alpha}_q(R_{ij}) \right) \equiv
\eta^{\alpha'}_q(R_{ij})\equiv R_{ij}\mod \bar{F}_q\!\cdot\!W.
\]
There is hence some element $g\in \bar{F}_q\!\cdot\!W$ such that 
\[
w_{\scalebox{0.85}{$\left[R_{ij}\mapsto \eta^{\alpha'}_q(R_{ij})\right]$}} = w_{\scalebox{0.85}{$\left[R_{ij}\mapsto
\eta^{\alpha'}_q\left( \eta^{\alpha}_q(R_{ij})\right)\right]$}}g. 
\]
But since $F'$ is a subgroup of $\bar{F}$, one can compose
 $\eta_q^{\alpha'}$ with itself, and it follows from the definition 
 that  $\eta_q^{\alpha'}\circ  \eta_q^{\alpha'} =  \eta_q^{\alpha'}$. 
Applying $\eta^{\alpha'}_q$ to the above equality thus gives: 
\[
w_{\scalebox{0.85}{$\left[R_{ij}\mapsto \eta^{\alpha'}_q(R_{ij})\right]$}} 
 = w_{\scalebox{0.85}{$\left[R_{ij}\mapsto \eta^{\alpha'}_q( \eta^{\alpha}_q(R_{ij}))\right]$}}\eta_q^{\alpha'}(g). \]
This shows that the word
$\eta^{\alpha'}_q(w)= w_{\scalebox{0.85}{$\left[R_{ij}\mapsto \eta^{\alpha'}_q(R_{ij})\right]$}}$ in $\fract{F'}{F'_q\!\cdot\!\eta^{\alpha'}_q(W)}$
is obtained from 
$\eta^{\alpha}_q(w)= w_{\scalebox{0.85}{$\left[R_{ij}\mapsto \eta^{\alpha}_q(R_{ij})\right]$}}$ in $\fract{F}{F_q\!\cdot\!\eta^{\alpha}_q(W)}$
by substituting each $R_i\in F$ by $\eta^{\alpha'}_q(R_i)$.
Now, by the inductive definition of $\eta^{\alpha'}_q$, we have that $\eta^{\alpha'}_q(R_{i})={R'_i}^{\,\eta^{\alpha'}_{q-1}(v_i)}$, 
and Lemma \ref{lem1}
 tells us that $\eta^{\alpha'}_{q-1}(v_i)\equiv \eta^{\alpha'}_{q}(v_i)\mod F'_{q-1}$. 
The conclusion then follows from Claim \ref{claim2}. 
\end{proof}

\bibliographystyle{abbrv}
\bibliography{References}

\begin{thebibliography}{10}

\bibitem{Artin}
E.~{A}rtin.
\newblock {Z}ur isotopie zweidimensionalen fl\"achen im \emph{R}$_4$.
\newblock {\em Abh. Math. Sem. Univ. Hamburg}, 4:174--177, 1926.

\bibitem{ABMW}
B.~{A}udoux, P.~{B}ellingeri, J.-B. {M}eilhan, and E.~{W}agner.
\newblock {H}omotopy classification of ribbon tubes and welded string links.
\newblock {\em {A}nn. {S}c. {N}orm. {S}uper. {P}isa {C}l. {S}ci.},
  17(1):713--761, 2017.

\bibitem{AMW}
B.~{A}udoux, J.-B. {M}eilhan, and E.~{W}agner.
\newblock {O}n codimension two embeddings up to link-homotopy.
\newblock {\em {J}. {T}opol.}, 10(4):1107--1123, 2017.

\bibitem{AMY_w}
B.~{Audoux}, J.-B. {Meilhan}, and Y.~{Yasuhara}.
\newblock {Welded graphs, Wirtinger groups and knotted punctured spheres}.
\newblock in 'Essays on Topology: Dedicated to Valentin Poenaru,' L. Funar and
  A. Papadopoulos, Eds., Springer Verlag, Berlin (2025), 389-427.

\bibitem{AMY_kirk}
B.~{Audoux}, J.-B. {Meilhan}, and Y.~{Yasuhara}.
\newblock {Higher order Kirk invariants of link maps}.
\newblock {\em Indiana Univ. Math. J.}, 74(6):1645--1676, 2025.

\bibitem{BT}
A.~{Bartels} and P.~{Teichner}.
\newblock {All two dimensions links are null homotopic.}
\newblock {\em {Geom. Topol.}}, 3:235--252, 1999.

\bibitem{BPT}
M.~Borodzik, M.~Powell, and P.~Teichner.
\newblock Link concordance implies link homotopy.
\newblock arXiv:2503.04348.

\bibitem{CKS}
J.~S. {C}arter, S.~{K}amada, and M.~{S}aito.
\newblock {\em {S}urfaces in $4$--space}, volume 142 of {\em {E}ncyclopaedia of
  {M}athematical {S}ciences}.
\newblock {S}pringer-{V}erlag, {B}erlin, 2004.
\newblock {L}ow-{D}imensional {T}opology, {III}.

\bibitem{CS}
J.~S. {Carter} and M.~{Saito}.
\newblock {\em {Knotted surfaces and their diagrams}}, volume~55.
\newblock Providence, RI: American Mathematical Society, 1998.

\bibitem{casson}
A.~J. Casson.
\newblock Link cobordism and milnor's invariant.
\newblock {\em Bulletin of the London Mathematical Society}, 7(1):39--40, 1975.

\bibitem{Chen}
K.-T. {C}hen.
\newblock {C}ommutator calculus and link invariants.
\newblock {\em {P}roc. {A}mer. {M}ath. {S}oc.}, 3(4):44--55, 1952.

\bibitem{Cochran}
T.~D. {Cochran}.
\newblock {Geometric invariants of link cobordism}.
\newblock {\em {Comment. Math. Helv.}}, 60:291--311, 1985.

\bibitem{Cochran_AMS}
T.~D. {Cochran}.
\newblock {Derivatives of links: Milnor's concordance invariants and Massey's
  products.}
\newblock {\em {Mem. Am. Math. Soc.}}, 427:73, 1990.

\bibitem{GPV}
M.~Goussarov, M.~Polyak, and O.~Viro.
\newblock {Finite-type invariants of classical and virtual knots.}
\newblock {\em {Topology}}, 39(5):1045--1068, 2000.

\bibitem{HL}
N.~Habegger and X.-S. Lin.
\newblock The classification of links up to link-homotopy.
\newblock {\em J. Amer. Math. Soc.}, 3:389--419, 1990.

\bibitem{hirsch}
M.~W. Hirsch.
\newblock {\em Differential topology}.
\newblock Graduate texts in mathematics. Springer-Verlag, New York Heidelberg
  Berlin, 1976.

\bibitem{IO}
K.~Igusa and K.~E. Orr.
\newblock Links, pictures and the homology of nilpotent groups.
\newblock {\em Topology}, 40(6):1125--1166, 2001.

\bibitem{Kervaire}
M.~A. {Kervaire}.
\newblock {Les n{\oe}uds de dimensions sup\'erieures}.
\newblock {\em {Bull. Soc. Math. Fr.}}, 93:225--271, 1965.

\bibitem{Kirby}
R.~{Kirby}.
\newblock {Problems in low-dimensional topology}.
\newblock in Geometric Topology, Providence, RI (1997), pp. 35--473.

\bibitem{Kirk}
P.~A. Kirk.
\newblock Link maps in the four sphere.
\newblock Differential topology, {Proc}. 2nd {Topology} {Symp}., {Siegen}/{FRG}
  1987, {Lect}. {Notes} {Math}. 1350, 31-43 (1988)., 1988.

\bibitem{Kolink}
U.~Koschorke.
\newblock A generalization of milnor's $\mu$-invariants to higher dimensional
  link maps.
\newblock {\em Topology}, 36(2):301--324, 1997.

\bibitem{LeDimet}
J.-Y. Le~Dimet.
\newblock Cobordisme d'enlacements de disques.
\newblock {\em M\'em. Soc. Math. France (N.S.)}, (32):ii+92, 1988.

\bibitem{MY_osaka}
J.-B. {Meilhan} and A.~{Yasuhara}.
\newblock {Whitehead double and Milnor invariants}.
\newblock {\em {Osaka J. Math.}}, 48(2):371--381, 2011.

\bibitem{MY7}
J.-B. Meilhan and A.~Yasuhara.
\newblock Link concordances as surfaces in $4$--space.
\newblock {\em Indiana Univ. Math. J.}, 71:2647--2669, 2022.

\bibitem{Milnor}
J.~Milnor.
\newblock Link groups.
\newblock {\em Ann. of Math. (2)}, 59:177--195, 1954.

\bibitem{Milnor2}
J.~Milnor.
\newblock Isotopy of links. {A}lgebraic geometry and topology.
\newblock In {\em A symposium in honor of {S}. {L}efschetz}, pages 280--306.
  Princeton University Press, Princeton, N. J., 1957.

\bibitem{Orr1}
K.~E. Orr.
\newblock New link invariants and applications.
\newblock {\em Comment. Math. Helv.}, 62:542--560, 1987.

\bibitem{Orr2}
K.~E. Orr.
\newblock Homotopy invariants of links.
\newblock {\em Invent. Math.}, 95(2):379--394, 1989.

\bibitem{Roseman3}
D.~{Roseman}.
\newblock {Reidemeister-type moves for surfaces in four-dimensional space.}
\newblock In {\em {Knot theory. Proceedings of the mini-semester, Warsaw,
  Poland, 1995}}, pages 347--380. Warszawa: Polish Academy of Sciences,
  Institute of Mathematics, 1998.

\bibitem{Saito}
M.~{Saito}.
\newblock {A note on cobordism of surface links in \(S^ 4\)}.
\newblock {\em {Proc. Am. Math. Soc.}}, 111(3):883--887, 1991.

\bibitem{Sato}
N.~{Sato}.
\newblock {Cobordisms of semi-boundary links}.
\newblock {\em {Topology Appl.}}, 18:225--234, 1984.

\bibitem{ST}
R.~Schneiderman and P.~Teichner.
\newblock The group of disjoint 2-spheres in 4-space.
\newblock {\em Ann. Math. (2)}, 190(3):669--750, 2019.

\bibitem{Stallings}
J.~Stallings.
\newblock Homology and central series of groups.
\newblock {\em J. Algebra}, 2:170--181, 1965.

\end{thebibliography}

\end{document}